 \definecolor{darkgreen}{HTML}{336633}
 \definecolor{darkred}{HTML}{993333}
\newcommand{\arxiv}[1]{\href{http://arxiv.org/abs/#1}{\tt
    arXiv:\nolinkurl{#1}}}
\theoremstyle{definition}
\newtheorem{thm}{Theorem}[section]
\newtheorem{lem}[thm]{Lemma}
\newtheorem{prop}[thm]{Proposition}
\newtheorem{cor}[thm]{Corollary}
\numberwithin{equation}{section}
\theoremstyle{definition}
\newtheorem{df}[thm]{Definition}
\theoremstyle{remark}
\newtheorem{rem}[thm]{Remark}
\def\bbC{\mathbb{C}}
\def\bbJ{\mathbb{J}}
\def\bbZ{\mathbb{Z}}
\def\scrD{\mathscr{D}}
\def\scrN{\mathscr{N}}
\def\scrR{\mathscr{R}}
\def\scrZ{\mathscr{Z}}
\def\calC{\mathcal{C}}
\def\calP{\mathcal{P}}
\def\calU{\mathcal{U}}
\def\calV{\mathcal{V}}
\def\TPL{\mathcal{TP}}
\def\GL{GL}
\def\ALB{\mathrm{LB}^-}
\def\la{\lambda}
\def\ov{\overline}
\def\diag{\mathrm{diag}}
\newcommand{\rank}{\operatorname{rank}\nolimits}
\title[Anti-commuting varieties]{Anti-commuting varieties}
\author[Xinhong Chen]{Xinhong Chen}
\address{ Department of Mathematics\\ Southwest Jiaotong University\\Sichuan 611756, China}
\email{chenxinhong@swjtu.edu.cn}
\author[Weiqiang Wang]{Weiqiang Wang}
\address{ Department of Mathematics\\ University of Virginia\\ Charlottesville, VA 22904, USA}
\email{ww9c@virginia.edu}
\keywords{Varieties of modules, irreducible components, GIT quotients}
\subjclass[2010]{Primary 16G10}
\begin{document}

\begin{abstract}
We study the anti-commuting variety which consists of pairs of anti-commuting $n\times n$ matrices. We provide an explicit description of its irreducible components and their dimensions. The GIT quotient of the anti-commuting variety with respect to the conjugation action of $GL_n$ is shown to be of pure dimension $n$. We also show the semi-nilpotent anti-commuting variety (in which one matrix is required to be nilpotent) is of pure dimension $n^2$ and describe its irreducible components.
\end{abstract}

\maketitle


 \setcounter{tocdepth}{1}
 \tableofcontents

 \section{Introduction}

 \subsection{The commuting varieties}

 The variety of pairs of commuting $n\times n$ matrices, known as the commuting variety, was shown by Motzkin and Taussky \cite{MT55} and Gerstenhaber \cite{Ger61} to be irreducible.  The notion of commuting varieties has been extended since then to semisimple Lie algebras and variants of nilpotent commuting varieties have been studied, in characteristic zero as well as in positive characteristic; for samples see \cite{Ri79, Ba01, Pr03} and references therein.

 \subsection{The goal}

In this paper we introduce the anti-commuting variety which consists of pairs of $n\times n$ matrices over $\bbC$ by
\begin{equation*}
\scrZ_n =\{ (A, B) \in M_n(\bbC) \times M_n(\bbC) \mid AB+BA=0 \}.
\end{equation*}
The general linear group $GL_n$ acts on $\scrZ_n$ by diagonal conjugation.
Our main goal is to understand the geometric structures of $\scrZ_n$, the GIT quotient $\scrZ_n// GL_n$, and a so-called semi-nilpotent anti-commuting variety $\scrN_n$; see \eqref{eq:Nn} below.


 %
 %
 \subsection{The main results}

In contrast to the commuting variety case, the variety $\scrZ_n$ turns out to be reducible.
A closed subvariety $\scrZ_{p,m,r}$ of $\scrZ_n$ is defined in Definition~\ref{def:Zpmr},
where the triples $(p,m,r)$ of nonnegative integers in the indexing set $\TPL_n$ given in \eqref{TPL} satisfy $2p+m+r=n$. Our first main result is the following structure theorem for $\scrZ_n$.

\begin{thm}  [Theorem~\ref{thm:AC}]
 \label{thm:1Zn}
The variety $\scrZ_n$ has one irreducible component $\scrZ_{p,m,r}$, for each $(p,m,r) \in \TPL_n$, so that $\scrZ_n =\cup_{(p,m,r) \in \TPL_n} \scrZ_{p,m,r}$.
Moreover, we have $\dim \scrZ_{p,m,r} =n^2+p$.
\end{thm}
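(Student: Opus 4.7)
The approach is four-fold: (i) exhibit each $\scrZ_{p,m,r}$ as the closure of an irreducible parametrized family; (ii) compute its dimension via that parametrization; (iii) verify the $\scrZ_{p,m,r}$ cover $\scrZ_n$; (iv) show no containments, so each is a distinct maximal irreducible component.

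For the parametrization attached to $(p,m,r)\in\TPL_n$, I would fix an ordered decomposition $\bbC^n=U\oplus W\oplus Y$ with $\dim U=2p$, $\dim W=m$, $\dim Y=r$, and consider the block pairs
\[
A = A_U\oplus 0_W\oplus A_Y,\qquad B = B_U\oplus B_W\oplus 0_Y,
\]
where $B_W\in\gl(W)$ and $A_Y\in\gl(Y)$ are arbitrary, and $(A_U,B_U)$ is a generic anti-commuting pair on $U$: $B_U$ is regular semisimple with eigenvalues $\pm\lambda_1,\dots,\pm\lambda_p$ (all distinct), and $A_U$ sends the $\lambda_i$-eigenspace onto the $-\lambda_i$-eigenspace for each $i$, automatically forcing $A_UB_U+B_UA_U=0$. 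The relation $AB+BA=0$ then holds block by block. Let $\mathcal{E}_{p,m,r}$ be the total space of tuples $(U,W,Y;A_U,B_U,B_W,A_Y)$; it fibers over the irreducible partial flag variety of ordered $(2p,m,r)$-decompositions with irreducible fibers, so is itself irreducible. A direct computation gives $\dim\mathcal{E}_{p,m,r}=(n^2-4p^2-m^2-r^2)+(4p^2+p)+m^2+r^2=n^2+p$; here the summand $4p^2+p$ is the dimension of the generic anti-commuting locus on $U$ (orbit dimension $4p^2-p$ plus a $2p$-dimensional moduli of eigenvalues and off-diagonal $2\times 2$ block invariants), which is the base case handled directly.

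The forgetful map $\mathcal{E}_{p,m,r}\to\scrZ_n$ lands in $\scrZ_{p,m,r}$ and is birational onto its image, since at a generic image point one recovers the decomposition uniquely as $W=\ker A$, $Y=\ker B$, $U=\mathrm{Im}(A)\cap\mathrm{Im}(B)$. Thus $\scrZ_{p,m,r}$ is irreducible of dimension $n^2+p$. For $\scrZ_n=\bigcup\scrZ_{p,m,r}$: any $(A,B)\in\scrZ_n$ has $\ker A$ and $\ker B$ both mutually stable under $A,B$ (immediate from $AB+BA=0$), and after reducing to $\ker A\cap\ker B=0$, the decomposition $W=\ker A$, $Y=\ker B$, $U=\mathrm{Im}(A)\cap\mathrm{Im}(B)$ places $(A,B)$ in the matching $\scrZ_{p,m,r}$; pairs with nontrivial common kernel arise as limits. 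For distinctness: at a generic point of $\scrZ_{p,m,r}$ one reads off $m=\dim\ker A$, $r=\dim\ker B$, and $2p=n-m-r$, so distinct triples give different generic invariants. Lower semi-continuity of $\dim\ker$ on $\scrZ_n$ then rules out any containment $\scrZ_{p,m,r}\subseteq\scrZ_{p',m',r'}$.

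The main obstacle I expect is step (iii), namely handling pairs with nontrivial common kernel $\ker A\cap\ker B$: one must show these lie in the closure of a generic (trivial-common-kernel) stratum of some $\scrZ_{p,m,r}$. I would proceed by induction on $\dim(\ker A\cap\ker B)$, at each step exhibiting an explicit one-parameter family in $\scrZ_n$ that peels off one common-kernel dimension while preserving the block type. The remaining pieces of the argument --- irreducibility of $\mathcal{E}_{p,m,r}$, the birationality to $\scrZ_{p,m,r}$, the dimension count, and the distinctness of components --- are largely bookkeeping once the bundle $\mathcal{E}_{p,m,r}$ is constructed.
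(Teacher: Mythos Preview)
Your parametrization and the resulting irreducibility and dimension count (steps (i)--(ii)) are sound and essentially equivalent to the paper's orbit map $\phi_{p,m,r}$, repackaged as a bundle over a partial flag variety; there is also a harmless $m\leftrightarrow r$ swap between your convention and the paper's Definition~\ref{def:Zpmr}. The two remaining steps, however, each have a genuine gap.

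\textbf{Step (iii), covering.} Your claim that when $\ker A\cap\ker B=0$ the subspaces $W=\ker A$, $Y=\ker B$, $U=\mathrm{Im}(A)\cap\mathrm{Im}(B)$ form a direct-sum decomposition adapted to $(A,B)$ is false. Take $n=2$, $A=J_2(0)$, $B=\diag(1,-1)$: then $AB+BA=0$ and $\ker A\cap\ker B=0$, but $W=\ker A=\langle e_1\rangle=\mathrm{Im}(A)=U$, so $U\oplus W$ is not direct; moreover $\dim\ker A=1$ and $\dim\ker B=0$, so your formula $2p=n-m-r$ gives the non-integer $1$. This pair in fact lies in $\scrZ_{1,0,0}$, and the paper reaches it only through a substantial deformation argument (Propositions~\ref{prop:block a,-a} and~\ref{prop:Jordan block 0}): one decomposes $A$ by Jordan form into a nilpotent block and blocks with eigenvalues $\pm a$, and for each block separately constructs explicit families in the relevant $\scrZ_{p,m,r}$ degenerating to the given pair. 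The nilpotent case (\S\ref{sec:cover}) is the technical heart of the paper and requires analyzing the Jordan type of a generic $B$ anticommuting with a fixed nilpotent $A$; it is not a matter of peeling off common-kernel directions.

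\textbf{Step (iv), no containment.} Kernel dimension is \emph{upper} semi-continuous, and the inequality it yields does not exclude proper containment. If $\scrZ_{p,m,r}\subseteq\scrZ_{p',m',r'}$, your argument gives only $m\ge m'$, $r\ge r'$ (and $p\le p'$ from dimensions), which is consistent with $2p+m+r=2p'+m'+r'$ for distinct triples. For $n=2$ nothing in your argument excludes $\scrZ_{0,0,2}\subset\scrZ_{1,0,0}$; what actually excludes it is the closed condition $\mathrm{tr}(B)=0$, which holds on $\scrZ_{1,0,0}$ but not on $\scrZ_{0,0,2}$. The paper handles this uniformly via GIT: each $\scrZ_{p,m,r}//G$ is irreducible of dimension $n$ (Theorem~\ref{thm:pure dimension}), so an inclusion forces equality of quotients, and then the disjointness of the open loci $\widetilde{\calU}_{p,m,r}$ (distinguished by the generic values of $\rank A$ and $\rank B$) yields the contradiction. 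Your kernel argument captures only this last disjointness, which rules out equality but not proper containment.
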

A simple count of \eqref{TPL} shows that the number of the irreducible components in $\scrZ_n$ is $(k+1)^2$ if $n=2k$, and $k(k+1)$ if $n=2k-1$.

We also study the GIT quotient $\scrZ_n// GL_n$.

\begin{thm}  [Theorem~\ref{thm:pure dimension}]
  \label{thm:2GIT}
The variety $\scrZ_n//GL_n$ is of pure dimension $n$; that is, every irreducible component  $\scrZ_{p,m,r}//GL_n$ has the same dimension $n$, for $(p,m,r) \in \TPL_n$.
\end{thm}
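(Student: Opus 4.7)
The plan is to treat each irreducible component of $\scrZ_n$ separately. Since $GL_n$ is reductive and each $\scrZ_{p,m,r}$ is closed and $GL_n$-stable in $\scrZ_n$, the induced map $\scrZ_{p,m,r}//GL_n \to \scrZ_n//GL_n$ is a closed immersion with irreducible image, and these images cover $\scrZ_n//GL_n$. It therefore suffices to prove $\dim \scrZ_{p,m,r}//GL_n = n$ for every $(p,m,r) \in \TPL_n$. For this I would use the standard dimension formula
\[ \dim \scrZ_{p,m,r}//GL_n \;=\; \dim \scrZ_{p,m,r} \;-\; \max_{x \in \scrZ_{p,m,r}} \dim(GL_n \cdot x), \]
valid for any irreducible affine variety acted on by a reductive group. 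Combined with $\dim \scrZ_{p,m,r} = n^2+p$ from Theorem~\ref{thm:1Zn} and the orbit-stabilizer relation, this reduces the theorem to the claim
\[ \dim \text{Stab}_{GL_n}(x_0) \;=\; n - p \quad\text{for a generic } x_0 = (A_0,B_0) \in \scrZ_{p,m,r}. \]

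I would verify this claim by producing an explicit generic $x_0$ and directly computing the joint centralizer of $A_0$ and $B_0$. A natural candidate, motivated by the construction used to prove Theorem~\ref{thm:AC}, places $A_0$ in block-diagonal form with $p$ generic $2\times 2$ blocks $\diag(\lambda_i,-\lambda_i)$ (with the $\lambda_i$ distinct and nonzero) on a $2p$-dimensional subspace, and $A_0 = 0$ on the remaining $(m+r)$-dimensional subspace $V := \ker A_0$; then $B_0$ is taken off-diagonal swapping the $\pm\lambda_i$-eigenspaces on each paired block, and $B_0|_V$ is chosen to be a regular element of $\gl(V)$ with $m$ distinct nonzero eigenvalues together with a single nilpotent Jordan block of size $r$. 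A block-by-block computation of the $g \in GL_n$ that commute simultaneously with $A_0$ and $B_0$ then yields: each $2\times 2$ block contributes a $1$-dimensional torus factor (coming from the equality $a=d$ forced on the two scalar actions by the off-diagonal $B_0$), and the $V$-block contributes $Z_{GL(V)}(B_0|_V)$, which is $(m+r)$-dimensional because $B_0|_V$ is regular. The sum is $p + (m+r) = n-p$.

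The main obstacle is justifying that this $x_0$ is indeed generic in $\scrZ_{p,m,r}$, that is, that its stabilizer attains the minimum dimension on the component. This is handled by upper semicontinuity of the stabilizer dimension: the locus $\{x\in \scrZ_{p,m,r} : \dim \text{Stab}_{GL_n}(x)\le n-p\}$ is Zariski open, and is nonempty by the explicit example, hence dense in the irreducible variety $\scrZ_{p,m,r}$. Assembling the pieces yields
\[ \dim \scrZ_{p,m,r}//GL_n \;=\; (n^2+p) - \bigl(n^2 - (n-p)\bigr) \;=\; n, \]
as desired.
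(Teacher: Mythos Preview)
Your overall plan is close to the paper's, but the explicit point $x_0$ you build is \emph{not} in $\scrZ_{p,m,r}$ in general, so the semicontinuity step is vacuous. Recall from the description $\scrZ_{p,m,r}=\overline{\mathrm{Im}(\phi_{p,m,r})}$ that every $(A,B)\in\scrZ_{p,m,r}$ satisfies $\rank B\le 2p+r$ (this holds on $\mathrm{Im}(\phi_{p,m,r})$ by the explicit block form of $B$ and passes to the closure). Your $B_0$ has rank $2p+(m+r-1)$, since the off-diagonal $2\times2$ blocks are invertible and $B_0|_V$ is regular with a size-$r$ nilpotent block; hence $x_0\notin\scrZ_{p,m,r}$ whenever $m\ge 2$. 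The slip is in taking $\ker A_0$ to be $(m{+}r)$-dimensional: in Definition~\ref{def:Zpmr} the generic $A$ has $\dim\ker A=r$, with the $m$ entries $a_{p+1},\dots,a_{p+m}$ \emph{nonzero}. If you instead use the point of $\calU_{p,m,r}$ displayed in \eqref{eq:gAg}--\eqref{eq:gBg}, the stabilizer computation still gives $n-p$: the $p$ paired blocks contribute $p$, the $m$ unpaired nonzero eigenvalues of $A$ (where $B=0$) contribute $m$, and the $r$-dimensional $\ker A$ (where $B$ is regular semisimple) contributes $r$.

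There is a second gap: the formula $\dim X/\!/G=\dim X-\max_x\dim(G\cdot x)$ is \emph{not} valid for an arbitrary reductive action on an irreducible affine variety. For $\bbC^*$ acting on $\bbC^2$ by scaling one gets $0$ on the left and $1$ on the right. What is true is $\dim X/\!/G\le \dim X-\dim(\text{generic orbit})$, with equality when the generic orbit is closed (so that the generic fiber of $\pi$ equals a single orbit). This is precisely what the paper supplies in Lemma~\ref{lem:U polystable}: points of $\calU_{p,m,r}$ correspond to semisimple modules over $\bbC\langle x,y\rangle/(xy+yx)$, hence have closed orbits. With that in hand, the paper's proof restricts $\pi_{p,m,r}$ to $\calU_{p,m,r}\to\widetilde{\calU}_{p,m,r}$, identifies each fiber with a closed orbit of dimension $n^2-n+p$, and concludes $\dim(\scrZ_{p,m,r}/\!/G)=\dim\widetilde{\calU}_{p,m,r}=(n^2+p)-(n^2-n+p)=n$. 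Your argument becomes correct once you replace $x_0$ by a point of $\calU_{p,m,r}$ and add the closed-orbit verification; note also that semicontinuity alone only gives $\min_x\dim\mathrm{Stab}(x)\le n-p$, so you still need the explicit computation on all of $\calU_{p,m,r}$ (not just one point) to pin down equality.
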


Introduce the following semi-nilpotent subvariety of $\scrZ_n$:
\begin{equation}
  \label{eq:Nn}
\scrN_n =\{ (A, B) \in M_n(\bbC) \times M_n(\bbC) \mid AB+BA=0, \text{ for } A \text{ nilpotent} \}.
\end{equation}
For a partition $\la$ of $n$, let $J_\la(0)$ be the associated Jordan normal form of eigenvalue $0$. We denote by $\scrN_\la$ the Zariski closure of the union of $G$-orbits of $(J_\la(0),B)$ in $\scrN_n$.

\begin{thm}
[Lemma~\ref{lem: dim of Nlambda}, Theorem~\ref{thm:N}]
\label{thm:3nil}
The variety $\scrN_n$ has irreducible components $\scrN_\la$ for all partitions of $n$.
Moreover, $\scrN_\la$ for all $\la$ have the same dimension $n^2$.
\end{thm}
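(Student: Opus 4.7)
The plan is to stratify $\scrN_n$ via the first projection $\pi \colon \scrN_n \to \calN$ onto the nilpotent cone. Writing $\calN = \bigsqcup_\la O_\la$ for the Jordan decomposition into $GL_n$-orbits indexed by partitions $\la$ of $n$ (with $O_\la$ the orbit of $J_\la(0)$), we have $\scrN_n = \bigsqcup_\la \pi^{-1}(O_\la)$; by the definition in the statement, $\scrN_\la = \overline{\pi^{-1}(O_\la)}$ and hence $\scrN_n = \bigcup_\la \scrN_\la$.

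To show each $\scrN_\la$ is irreducible of dimension $n^2$, I would consider the surjection
\[
\Phi \colon GL_n \times F_\la \longrightarrow \pi^{-1}(O_\la), \qquad (g, B) \mapsto \bigl(g J_\la(0) g^{-1},\; g B g^{-1}\bigr),
\]
where $F_\la = \{ B \in M_n(\bbC) : J_\la(0) B + B J_\la(0) = 0 \}$ is a linear, hence irreducible, subspace. A fiber $\Phi^{-1}(A, B')$ consists of pairs $(g, g^{-1} B' g)$ with $g \in GL_n$ satisfying $g J_\la(0) g^{-1} = A$, so $g$ runs over a single coset of the centralizer $Z(J_\la(0))$. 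Consequently $\pi^{-1}(O_\la)$ is irreducible with
\[
\dim \pi^{-1}(O_\la) \;=\; \dim GL_n + \dim F_\la - \dim Z(J_\la(0)) \;=\; \dim O_\la + \dim F_\la.
\]

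The main content is computing $\dim F_\la$. Writing $J_\la(0) = \diag(J_{\la_1}, \ldots, J_{\la_\ell})$ and decomposing $B = (B_{ij})$ into blocks of size $\la_i \times \la_j$, the anti-commutation relation decouples into $J_{\la_i} B_{ij} + B_{ij} J_{\la_j} = 0$ for each pair $(i, j)$. For two Jordan blocks $J_m, J_p$ and $X = (x_{ij})$, this reads $x_{i+1,j} + x_{i,j-1} = 0$ with conventions $x_{m+1, j} = x_{i, 0} = 0$. The sign-twist $y_{i,j} = (-1)^i x_{i,j}$ converts it into the classical Sylvester equation $y_{i+1,j} = y_{i,j-1}$, whose solution space has dimension $\min(m, p)$. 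Summing over block pairs yields
\[
\dim F_\la \;=\; \sum_{i, j} \min(\la_i, \la_j) \;=\; \sum_k (\la_k')^2 \;=\; \dim Z(J_\la(0)),
\]
where $\la'$ is the conjugate partition. Combined with $\dim O_\la = n^2 - \dim Z(J_\la(0))$, this gives $\dim \scrN_\la = n^2$ for every partition $\la$.

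For the irreducible components: the $\scrN_\la$ are irreducible closed subsets of $\scrN_n$ of equal dimension $n^2$ whose union is $\scrN_n$, so any containment $\scrN_\la \subseteq \scrN_\mu$ is forced to be an equality, and then projecting via $\pi$ gives $\overline{O_\la} = \overline{O_\mu}$ and hence $\la = \mu$. Thus the $\scrN_\la$ are exactly the irreducible components of $\scrN_n$, establishing both claims of the theorem. The main obstacle is the fiber dimension calculation: without the sign twist identifying the anti-commutator kernel with the usual commutator kernel (and so with the centralizer), there is no evident reason $\dim F_\la$ should match $\dim Z(J_\la(0))$, and the equi-dimensionality phenomenon would appear coincidental.
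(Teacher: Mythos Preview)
Your argument is correct and follows essentially the same route as the paper: the paper uses the same parametrization $\Phi$ (called $\phi_\la$ there) to get irreducibility, the same fiber-over-orbit count $\dim \scrN_\la = \dim O_\la + \dim F_\la$, and the same key observation that $\dim F_\la = \dim Z(J_\la(0))$ (deduced there from the $\sigma$-layered matrix description in Proposition~\ref{ACmatrix} rather than your explicit sign twist, but these are equivalent). Your non-containment step via $\pi(\scrN_\la)=\overline{O_\la}$ is a minor variant of the paper's use of the open sets $U_\la=\pi^{-1}(O_\la)$, and both are fine.
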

For quivers with loops, a certain variety was first studied by Yiqiang Li \cite{Li16} and then in \cite{Boz16}, and it provides a suitable generalization of Lusztig's nilpotent variety for quivers without loops \cite{Lu91}. For the Jordan quiver, Li's variety reduces to  the semi-nilpotent subvariety in the commuting variety setting, and Li proved  the counterpart of Theorem~\ref{thm:N}. It is interesting to note that the moment map used by Lusztig for quivers without loops can be formulated via either commutators or anti-commutators. From this view, the anti-commutator considered in this paper is also a natural extension to the Jordan quiver (and it might be interesting to explore this further for more general quivers with loops).

 \subsection{Further questions}
   \label{subsec:questions}

  Let us make a remark on the ground field, which we have assumed to be $\bbC$. Results in Sections~\ref{sec:Jordan}, \ref{sec:Irred}, \ref{sec:cover} and \ref{sec:nilpotent} are valid over an algebraically closed field $K$ of characteristic not equal to 2. We expect that Theorem~\ref{thm:1Zn} is valid over $K$ too. While our current proof uses the GIT quotients to show that the varieties $\scrZ_{p,r,m}$ have no inclusion relation, likely a different argument valid over $K$ which bypasses GIT can be found.

It is enlightening to view the commuting variety (and respectively, anti-commuting variety) as the variety of modules of dimension $n$  of the algebra $\calP_2$ of polynomials (and respectively, algebra $\calP_2^-$ of skew-polynomials) in 2 variables.  Both these algebras can be regarded as the specializations at $q=\pm 1$ of the $q$-Weyl algebra
\[
xy =qyx.
\]
This leads to the natural question of understanding the geometric structures of the variety of modules of dimension $n$ of the $q$-Weyl algebra, with $q$ being an arbitrary root of 1. This will be developed in a sequel \cite{CL19}.

It will be interesting to understand the stratification of the varieties $\scrZ_{p,m,r}$.

The anti-commuting and commuting varieties can also be related directly in different ways. See \S\ref{subsec:C-AC} for some suggestions.

 \subsection{The organization}

The paper is organized as follows.
We describe in Section~\ref{sec:Jordan} the matrices which anti-commute (and respectively, commute) with a Jordan normal form. The similarities and differences between anti-commuting and commuting cases are made clear.

In Section~\ref{sec:Irred}, we prove that the variety $\scrZ_{p,m,r}$ is irreducible and compute its dimension. Two general properties for the varieties $\scrZ_{p,m,r}$ are useful. A ``direct sum" property  (which is proved in Lemma~\ref{lem:ZZZ12}) states that $\big(\diag(A_1,A_2), \diag(B_1,B_2) \big) \in\scrZ_{p_1+p_2,m_1+m_2,r_1+r_2}$,  for $(A_1,B_1)\in\scrZ_{p_1,m_1,r_1}$ and $(A_2,B_2)\in\scrZ_{p_2,m_2,r_2}$. A flipping symmetry $\scrZ_{p,m,r} \cong \scrZ_{p,r,m}$ by sending $(A,B) \mapsto (B,A)$ is established in Lemma~\ref{lem:switch}.

Section~\ref{sec:cover} is devoted to the proof that $\scrZ_n$ is the union of $\scrZ_{p,m,r}$ for all $(p,m,r)\in \TPL_n$. By a reduction via the ``direct sum" property, we only need to figure out which component $\scrZ_{p,m,r}$ contains a given $(A,B) \in \scrZ_n$, where either $A$ has eigenvalues $\pm a$ for a fixed $a\in \bbC^*$ or $A$ is nilpotent (see Propositions~\ref{prop:block a,-a} and \ref{prop:Jordan block 0}).
These two cases require different approaches, using only elementary algebraic geometry and linear algebra arguments. There are often natural (and simpler) commutative counterparts to our statements in our anti-commuting setting, some of which could be new and applicable in other circumstances; one such application is indeed found in \cite{WW18}.

It is established in Section~\ref{sec:GIT} that the GIT quotient $\scrZ_n//GL_n$ is of pure dimension $n$ (Theorem~\ref{thm:2GIT}). This is in turn used to show that there is no inclusion relation among $\scrZ_{p,m,r}$ for different triples $(p,m,r)$, completing the proof that $\scrZ_{p,m,r}$ are irreducible components of $\scrZ_n$, whence Theorem~\ref{thm:1Zn}.

Theorem~\ref{thm:3nil} on the semi-nilpotent anti-commuting variety is proved in Section~\ref{sec:nilpotent}.

 \vspace{.4cm}
 {\bf Acknowledgment.}
We thank Ming Lu for numerous stimulating discussions, and Zhenbo Qin for helpful remarks. We thank Yiqiang Li for his suggestion of considering semi-nilpotent varieties and bringing his work to our attention. We thank the referee for constructive comments and insightful suggestions, which greatly improve this paper. The first author is partially supported by NSFC grant No.~11601441 and CSC grant No.~201707005033, and she thanks University of Virginia for hospitality during her visit when this project is carried out. The second author is partially supported by an NSF grant DMS-1702254.

%

\section{Matrices (anti-)commuting with Jordan normal forms}
  \label{sec:Jordan}

In this section, we describe uniformly the matrices which anti-commute or commute with a matrix in Jordan normal form.

\subsection{$\sigma$-layered matrices}

Let $\sigma\in \{\pm 1\}$; sometimes we identify $\sigma$ as a sign $\pm$. Consider $m\times n$ matrices over $\bbC$ of the form:
\begin{eqnarray}
   \label{eq:block a1}
{\rm L}^\sigma(m,n,\vec{v})=\left [\begin{array}{cccccccccc}
 0      & \cdots  & 0      & b_1     & b_2        &\cdots     &  b_{m-1}             & b_m                 \\
 0      & \cdots  & 0      & 0       & \sigma b_1       &\cdots     & \sigma b_{m-2}             &\sigma b_{m-1}           \\
 \vdots &         & \vdots & \vdots  &            & \ddots    & \vdots               & \vdots             \\
 0      & \cdots  & 0      & 0       &  0         &\cdots     & \sigma^{m-2}b_1          &\sigma^{m-2}b_2       \\
 0      & \cdots  & 0      & 0       & 0          &\cdots     & 0                    &\sigma^{m-1}b_1
\end{array} \right] \quad \text{if } m\leq n,
\end{eqnarray}
or
\begin{eqnarray}
   \label{eq:block a2}
{\rm L}^\sigma(m,n,\vec{v})=\left [\begin{array}{cccccccccc}
  b_1     & b_2      &    \cdots       & b_{n-1}           &b_n                \\
   0      &\sigma b_1      & \cdots          &\sigma b_{n-2}          &\sigma b_{n-1}            \\
     \vdots     &   \vdots       &  \ddots         &               \vdots    &        \vdots            \\
   0      &    0     &       \cdots          & \sigma^{n-2} b_1        &    \sigma^{n-2} b_2       \\
   0      &     0    &   \cdots        &     0             &   \sigma^{n-1} b_1    \\
   0      &    0     &   \cdots        &   0               &  0                  \\
 \vdots   &\vdots    &                 &\vdots             &\vdots               \\
  0       &    0     &    \cdots       &    0              &  0
 \end{array} \right] \quad \text{if } m\geq n,
\end{eqnarray}
where $\vec{v}=(b_1,\ldots,b_{\min(m,n)})$.
We call matrices of the form ${\rm L}^\sigma(m,n,\vec{v})$ {\em $\sigma$-layered matrices} (or layered matrices for $\sigma=+$ and anti-layered matrices for $\sigma=-$).

Let $J_m(\alpha)$ denote the $m\times m$ Jordan block of eigenvalue $\alpha$. The following lemma is a special case of Lemmas~\ref{lem:AC2dist} and \ref{lem:AC2a}, where the reader can find proofs.

\begin{lem}
   \label{Jordan anti-layer}
Let $\alpha_1, \alpha_2 \in \bbC$, and let $\sigma\in \{\pm 1\}$.  Then a matrix $B$ satisfies the identity $J_{m}(\alpha_1) B =\sigma B J_{n}(\alpha_2)$ if and only if
\begin{eqnarray*}
B=\begin{cases}
  0                                                       & \   \text{if}\  \alpha_1\neq \sigma\alpha_2 \\
   {\rm L}^\sigma(m,n,\vec{v})                                        & \  \text{if}\   \alpha_1=\sigma\alpha_2,
\end{cases}
\end{eqnarray*}
for some vector $\vec{v}$; see \eqref{eq:block a1}--\eqref{eq:block a2}.
\end{lem}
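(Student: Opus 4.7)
The plan is to convert the matrix equation into a scalar recurrence on the entries of $B$. Writing $J_m(\alpha_1)=\alpha_1 I_m+N_m$ where $N_m$ is the shift matrix with $1$'s on the superdiagonal (and similarly for $J_n(\alpha_2)$), the equation $J_m(\alpha_1)B=\sigma B J_n(\alpha_2)$ becomes
\[
(\alpha_1-\sigma\alpha_2)B+N_mB-\sigma BN_n=0.
\]
For $B=(b_{ij})$, the $(i,j)$-entry of $N_mB$ is $b_{i+1,j}$ with the convention $b_{m+1,j}=0$, and the $(i,j)$-entry of $BN_n$ is $b_{i,j-1}$ with $b_{i,0}=0$, so the identity amounts to
\[
(\alpha_1-\sigma\alpha_2)\,b_{ij}+b_{i+1,j}-\sigma\,b_{i,j-1}=0, \qquad 1\le i\le m,\ 1\le j\le n.
\]

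In the case $\alpha_1\ne\sigma\alpha_2$, set $c=\alpha_1-\sigma\alpha_2\ne 0$ and argue column by column. For $j=1$ the boundary $b_{i,0}=0$ reduces the relation to $cb_{i,1}+b_{i+1,1}=0$; evaluating at $i=m$ and using $b_{m+1,1}=0$ forces $b_{m,1}=0$, and a downward induction on $i$ kills the first column. Once column $j-1$ vanishes, the recursion at column $j$ has the same shape and the identical back-induction kills it, giving $B=0$.

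In the case $\alpha_1=\sigma\alpha_2$ the recursion reduces to $b_{i+1,j}=\sigma b_{i,j-1}$, which says that stepping one step down-right multiplies the entry by $\sigma$. Hence each diagonal $\{i-j=d\}$ is propagated by $\sigma$ from a single free value, and the question is which diagonals are killed by the two boundary conditions. Setting $i=m$ and using $b_{m+1,j}=0$ gives $b_{m,j-1}=0$ for $1\le j\le n$, killing every diagonal that passes through row $m$ strictly left of column $n$. Setting $j=1$ and using $b_{i,0}=0$ gives $b_{i+1,1}=0$ for $1\le i\le m$, killing every diagonal that passes through column $1$ strictly below row $1$. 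A short check shows that what survives, for $m\le n$, are the $m$ diagonals threading the top row in columns $n-m+1,\ldots,n$, and for $m\ge n$ the $n$ diagonals threading the entire top row. Declaring the surviving top-row entries to be $b_1,\ldots,b_{\min(m,n)}$ (matching the indexing in \eqref{eq:block a1}--\eqref{eq:block a2}) and then propagating by $\sigma$ along each diagonal reproduces exactly the block forms ${\rm L}^\sigma(m,n,\vec{v})$.

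The only real obstacle is bookkeeping: one must identify which diagonals survive in each of the two shape regimes $m\le n$ and $m\ge n$, and verify that the labeling of the parameters $b_1,\ldots,b_{\min(m,n)}$ together with the distribution of the powers $\sigma^{i-1}$ match the explicit formulas. Apart from this indexing, the argument is a short exercise in linear algebra, and the two regimes can be treated in parallel.
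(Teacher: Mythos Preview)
Your proof is correct and follows essentially the same approach as the paper: the paper defers this lemma to the more general Lemmas~\ref{lem:AC2dist} and \ref{lem:AC2a}, whose proofs likewise write out the matrix equation entry by entry and solve the resulting recursion (starting from the $(m,1)$-corner in the first case, and reading off the upper-triangular pattern in the second). Your organization via the diagonal recursion $b_{i+1,j}=\sigma b_{i,j-1}$ is a slightly cleaner packaging of the same computation, and has the minor advantage of treating $\sigma=\pm1$ uniformly rather than relying on the classical $\sigma=+1$ case separately.
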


\begin{prop}\label{ACmatrix}
Let $\sigma\in \{\pm 1\}$. Let $A=\diag(J_{m_1}(\alpha_1),J_{m_2}(\alpha_2),...,J_{m_r}(\alpha_r))$. Then a matrix $B$ satisfies $AB=\sigma BA$ if and only if $B=(B_{ij})$ is an $r\times r$ block matrix with its $m_i\times m_j$-submatrix
\begin{eqnarray*}
B_{ij}=\begin{cases}
0                                                       &   \text{if}\ \alpha_i\neq \sigma\alpha_j \\
{\rm L}^\sigma( m_i,m_j,\vec{v}_{ij})                               &   \text{if}\    \alpha_i=\sigma\alpha_j,
\end{cases}
\end{eqnarray*}
for arbitrary $\min (m_i,m_j)$-tuple vectors $\vec{v}_{ij}$ for all $i,j$; see \eqref{eq:block a1}--\eqref{eq:block a2}.
\end{prop}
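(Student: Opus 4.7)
The plan is to reduce the proposition to the single-block Lemma~\ref{Jordan anti-layer} by exploiting the block-diagonal structure of $A$. First, I would partition any candidate matrix $B$ conformally with $A$, writing $B=(B_{ij})_{1\le i,j\le r}$ where $B_{ij}$ is an $m_i\times m_j$ block. Since $A=\diag(J_{m_1}(\alpha_1),\ldots,J_{m_r}(\alpha_r))$, the products $AB$ and $BA$ are themselves block matrices with $(i,j)$-blocks equal to $J_{m_i}(\alpha_i)B_{ij}$ and $B_{ij}J_{m_j}(\alpha_j)$, respectively. Hence the single matrix equation $AB=\sigma BA$ is equivalent to the collection of $r^2$ independent block equations
\[
J_{m_i}(\alpha_i)\,B_{ij}\;=\;\sigma\,B_{ij}\,J_{m_j}(\alpha_j), \qquad 1\le i,j\le r.
\]

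Next I would apply Lemma~\ref{Jordan anti-layer} to each pair $(i,j)$ separately. The lemma, applied with $\alpha_1=\alpha_i$ and $\alpha_2=\alpha_j$, says exactly that $B_{ij}=0$ when $\alpha_i\neq \sigma\alpha_j$, and $B_{ij}={\rm L}^\sigma(m_i,m_j,\vec{v}_{ij})$ for some $\min(m_i,m_j)$-tuple $\vec{v}_{ij}$ when $\alpha_i=\sigma\alpha_j$. This handles both directions: any $B$ of the asserted form clearly satisfies each block equation (by the ``if'' part of the lemma), and conversely any $B$ satisfying $AB=\sigma BA$ has each block forced into this form (by the ``only if'' part).

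There is essentially no obstacle here, since the proposition is a pure bookkeeping statement that assembles the one-block result for every pair of Jordan blocks. The only care needed is to confirm that the vectors $\vec{v}_{ij}$ are genuinely arbitrary and independent across $(i,j)$; but this is immediate from the fact that the block equations decouple. Thus the proof consists of one display unpacking $AB=\sigma BA$ block-by-block, followed by a single invocation of Lemma~\ref{Jordan anti-layer}.
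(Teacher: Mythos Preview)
Your proposal is correct and follows essentially the same approach as the paper: decompose $AB=\sigma BA$ into the block equations $J_{m_i}(\alpha_i)B_{ij}=\sigma B_{ij}J_{m_j}(\alpha_j)$ and then invoke Lemma~\ref{Jordan anti-layer} on each block. The paper's proof is just a terser version of exactly this argument.
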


\begin{proof}
The $(i,j)$-th block of the equation $AB-\sigma BA=0$ reads $ J_{m_i}(\alpha_i)B_{ij}-\sigma B_{ij}J_{m_j}(\alpha_j)=0$. The proposition now follows by Lemma~\ref{Jordan anti-layer}.
\end{proof}

\subsection{$\sigma$-layered block matrices}
  \label{subsec:block}

Introduce the following $sn\times sn$ matrix in $s$ blocks
\begin{align*}
&\mathbb{J}_{s,n}(0)=
{
\begin{bmatrix}
0      &  I_n  & 0     & \ldots  &  0      & 0              \\
0      &  0    &I_n    & \ldots  &  0      & 0              \\
\vdots &\vdots &\vdots &         & \vdots  &  \vdots   \\
0      &  0    &0      & \ldots  &  0      & I_n             \\
0      &  0    &0      & \ldots  &  0      & 0              \\
 \end{bmatrix},
 }
\end{align*}
where $I_n$ is the $n\times n$ identity matrix. Note $\mathbb{J}_{1,n}(0)$ is the $n\times n$ zero matrix. For any $\alpha \in \bbC$, we let
\[
\mathbb{J}_{s,n}(\alpha) = \alpha I_{sn} +\mathbb{J}_{s,n}(0).
\]
We introduce a shorthand notation
\[
J_s^n(\alpha) :=\diag(\underbrace{J_s(\alpha),\ldots,J_s(\alpha)}_{n}).
\]
Note $J_s^n(\alpha) = \alpha I_{sn} + J_s^n(0)$.

\begin{lem}
  \label{lem:sJordan}
The matrix $J_s^n(\alpha)$ is similar to the  matrix $\mathbb{J}_{s,n}(\alpha)$.
\end{lem}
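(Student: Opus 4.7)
The plan is first to reduce to the nilpotent case $\alpha=0$. Since $J_s^n(\alpha)=\alpha I_{sn}+J_s^n(0)$ and $\mathbb{J}_{s,n}(\alpha)=\alpha I_{sn}+\mathbb{J}_{s,n}(0)$, and since conjugation commutes with adding a scalar matrix, a similarity $P J_s^n(0) P^{-1}=\mathbb{J}_{s,n}(0)$ automatically gives $P J_s^n(\alpha) P^{-1}=\mathbb{J}_{s,n}(\alpha)$.

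Next I would exhibit the similarity explicitly by relabeling basis vectors. Write the standard basis of $\bbC^{sn}$ twice under two different enumerations. For $J_s^n(0)=\diag(J_s(0),\ldots,J_s(0))$, introduce a basis $\{f_{i,j}\mid 1\leq i\leq n,\ 1\leq j\leq s\}$, where $i$ indexes which of the $n$ diagonal Jordan blocks we sit in and $j$ indexes the position within that block; then $J_s^n(0)f_{i,j}=f_{i,j-1}$, with the convention $f_{i,0}=0$. For $\mathbb{J}_{s,n}(0)$, introduce a basis $\{e_{j,i}\mid 1\leq j\leq s,\ 1\leq i\leq n\}$, where $j$ indexes which of the $s$ big block-rows we sit in and $i$ indexes the coordinate within the size-$n$ identity sitting above the diagonal; the block description of $\mathbb{J}_{s,n}(0)$ then gives $\mathbb{J}_{s,n}(0)e_{j,i}=e_{j-1,i}$, with $e_{0,i}=0$.

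Finally, I would let $P$ be the permutation matrix sending $f_{i,j}\mapsto e_{j,i}$. The two recurrences above are intertwined by $P$, so $P\,J_s^n(0) = \mathbb{J}_{s,n}(0)\,P$, which is the desired similarity. Alternatively, one can phrase the same argument invariantly: both matrices are nilpotent with $n$ Jordan chains of length $s$ (for $\mathbb{J}_{s,n}(0)$, the chain with $k$-th entry of the top block as its socle generator is $e_k\mapsto e_{n+k}\mapsto\cdots\mapsto e_{(s-1)n+k}$), so they have identical Jordan types and hence are conjugate.

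The argument is essentially a bookkeeping exercise; the only potential obstacle is the indexing bookkeeping in writing down $P$, which is handled by the $f_{i,j}\leftrightarrow e_{j,i}$ relabeling.
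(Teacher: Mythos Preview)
Your proof is correct. The reduction to $\alpha=0$ is exactly what the paper does. After that, however, the paper takes a different route: it simply observes that the ranks of $(J_s^n(0))^k$ and $(\mathbb{J}_{s,n}(0))^k$ agree for all $k\ge 1$, which forces the two nilpotent matrices to have the same Jordan type and hence be similar. Your main argument instead constructs the conjugating permutation matrix explicitly via the relabeling $f_{i,j}\leftrightarrow e_{j,i}$. The paper's approach is a one-line invariant argument; yours is constructive and actually names the intertwiner $P$, which could be useful if one later needs to transport structures between the two presentations. Your alternative phrasing (both matrices have $n$ Jordan chains of length $s$) is essentially the same content as the paper's rank comparison, just stated in terms of chains rather than ranks.
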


\begin{proof}
It suffices to show the lemma for $\alpha=0$, which follows by comparing the ranks of $(J_s^n(0))^k$ and $(\mathbb{J}_{s,n}(0))^k$, for all $k\ge 1$.
\end{proof}

Let $\sigma\in \{\pm 1\}$. Consider the following $s\times t$ block matrices
\begin{align}\label{eq:block al1}
{\rm LB}^\sigma(s,t,\vec{V})=
\begin{bmatrix}
B_1      &  B_2     & \ldots  & B_{t-1}    & B_t             \\
0        & \sigma B_1     & \ldots  &\sigma B_{t-2}    & \sigma B_{t-1}           \\
\vdots   &\vdots    &         &\vdots      &  \vdots   \\
0        &  0       & \ldots  &\sigma^{t-2}B_1 &  \sigma^{t-2}B_2          \\
0        &  0       & \ldots  & 0          &  \sigma^{t-1}B_1      \\
0        &  0       & \ldots  &0           &  0      \\
\vdots   & \vdots   &       &\vdots           & \vdots      \\
0&0& \ldots &0&0
 \end{bmatrix}, \text{ if}\ s\geq t;
\end{align}
or
\begin{align}\label{eq:block al2}
{\rm LB}^\sigma (s,t,\vec{V})=
\begin{bmatrix}
0      &\ldots   & 0     & B_1      &  B_2       & \ldots   & B_{s-1}    & B_s             \\
0      & \ldots  & 0     & 0        & \sigma B_1       & \ldots   & \sigma B_{s-2}   & \sigma B_{s-1}           \\
\vdots &                 &  \vdots  & \vdots   & \vdots   &          &\vdots      &  \vdots   \\
0      &\ldots   &0      & 0        & 0          &  \ldots  & \sigma ^{s-2}B_{1}& \sigma ^{s-2} B_{2} \\
0      & \ldots  &0      & 0        & 0          &  \ldots  &  0         & \sigma^{s-1}B_1\\
 \end{bmatrix}, \text{ if}\ s\leq t,
\end{align}
where $\vec{V}=(B_1,\dots,B_{\min(s,t)})$. Note that the $B_{i}$'s are matrices of the same size. We call the block matrices of the form ${\rm LB}^\sigma(s,t,\vec{V})$ \emph{$\sigma$-layered block matrices}  (or layered block matrices for $\sigma=+$ and anti-layered block matrices for $\sigma=-$).

The following lemma and proposition are natural generalizations of Lemma~ \ref{Jordan anti-layer} and Proposition~\ref{ACmatrix}. The proofs are again by direct computations and will also be skipped.

\begin{lem}
 \label{lem:ALB}
 Let $\alpha_1, \alpha_2 \in \bbC$, and let $\sigma\in \{\pm 1\}$.
Then a matrix $B$ satisfies the identity $\mathbb{J}_{s,n_s}(\alpha_1)B=\sigma B\mathbb{J}_{t,n_t}(\alpha_2)$ if and only if
\begin{eqnarray*}
B=\begin{cases}
  0                                                       & \   \text{if}\  \alpha_1\neq \sigma\alpha_2 \\
   {\rm LB}^\sigma(s,t,\vec{V})                                        & \  \text{if}\   \alpha_1=\sigma\alpha_2,
\end{cases}
\end{eqnarray*}
for $\vec{V}=(B_1,\dots,B_{\min(s,t)})$ with $n_s\times n_t$ matrices $B_i$ for all $i$; see \eqref{eq:block al1}-\eqref{eq:block al2}.
\end{lem}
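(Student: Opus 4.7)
The plan is to mimic the scalar proof of Lemma \ref{Jordan anti-layer} at the block level. Since $\mathbb{J}_{s,n_s}(\alpha)=\alpha I_{sn_s}+\mathbb{J}_{s,n_s}(0)$ and $\mathbb{J}_{s,n_s}(0)$ carries $I_{n_s}$ on the block superdiagonal, multiplying $B$ on the left by $\mathbb{J}_{s,n_s}(\alpha_1)$ and on the right by $\mathbb{J}_{t,n_t}(\alpha_2)$ acts on the $s\times t$ block decomposition of $B$ in exactly the same way that the scalar Jordan blocks $J_s(\alpha_1)$ and $J_t(\alpha_2)$ act on a scalar $s\times t$ matrix in the proof of Lemma \ref{Jordan anti-layer}. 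The only change is that the ``entries'' $B_{ij}$ are now $n_s\times n_t$ matrices rather than scalars, while the relations they satisfy are still linear in these matrix entries.

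More concretely, I would write $B=(B_{ij})$ with $B_{ij}$ of size $n_s\times n_t$, and use the conventions $B_{s+1,j}=0$ and $B_{i,0}=0$. Comparing the $(i,j)$-blocks of the two sides of $\mathbb{J}_{s,n_s}(\alpha_1)B=\sigma B\mathbb{J}_{t,n_t}(\alpha_2)$ yields
\[
(\alpha_1-\sigma\alpha_2)B_{ij}=\sigma B_{i,j-1}-B_{i+1,j},\qquad 1\leq i\leq s,\ 1\leq j\leq t.
\]
When $\alpha_1\neq\sigma\alpha_2$, I would sweep column by column, working from the bottom row upward within each column: the equation at $(s,1)$ forces $B_{s,1}=0$, then the equation at $(s-1,1)$ forces $B_{s-1,1}=0$, and so on, so the first column vanishes; feeding this into the second column and iterating, then into the third column, and so on, yields $B=0$.

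When $\alpha_1=\sigma\alpha_2$, the system collapses to the recursion $B_{i+1,j}=\sigma B_{i,j-1}$ together with the boundary conditions $B_{i,1}=0$ for $2\leq i\leq s$ (from the $j=1$ column) and $B_{s,k}=0$ for $1\leq k\leq t-1$ (from the $i=s$ row). Iterating the recursion gives $B_{i,j}=\sigma^{i-1}B_{1,j-i+1}$ for $i\leq j$ and $B_{i,j}=0$ for $i>j$. If $s\geq t$, the row-$s$ boundary is automatic, $B_{1,1},\ldots,B_{1,t}$ remain free, and setting $B_k:=B_{1,k}$ reproduces \eqref{eq:block al1}. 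If $s\leq t$, the same boundary forces $B_{1,1}=\cdots=B_{1,t-s}=0$, leaving $B_{1,t-s+1},\ldots,B_{1,t}$ free, and relabeling $B_k:=B_{1,t-s+k}$ reproduces \eqref{eq:block al2}. The only real obstacle is the bookkeeping: keeping the regimes $s\geq t$ and $s\leq t$ straight and matching the surviving free blocks with the two displayed shapes. No new algebra is needed beyond the scalar case.
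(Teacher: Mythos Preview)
Your proposal is correct and is exactly the kind of direct block-by-block computation the paper has in mind; indeed, the paper explicitly declines to write out a proof, stating that it ``is again by direct computations and will also be skipped,'' and your argument fills in precisely those details in the natural way.
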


\begin{prop}\label{ALBmatrix}
Let $\sigma\in \{\pm 1\}$. Let $A=\diag(\mathbb{J}_{1,n_1}(\alpha_1), \mathbb{J}_{2,n_2}(\alpha_2),..., \mathbb{J}_{r,n_r}(\alpha_r))$. Then a matrix $B$ satisfies $AB= \sigma BA$ if and only if $B=(B_{ij})$ is an $r\times r$ block matrix with its $in_i\times jn_j$-submatrix
\begin{eqnarray*}
B_{ij}=\begin{cases}
0                                                       &   \text{if}\ \alpha_i\neq \sigma\alpha_j \\
{\rm LB}^\sigma ( i,j,\vec{B}_{ij})                               &   \text{if}\    \alpha_i=\sigma\alpha_j;
\end{cases}
\end{eqnarray*}
here ${\rm LB}^\sigma( i,j,\vec{B}_{ij})$ is an $i\times j$ block matrix (whose block size is $n_i\times n_j$) for every $i$ and $j$; see \eqref{eq:block al1}--\eqref{eq:block al2}.
\end{prop}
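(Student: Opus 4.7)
The plan is to mimic verbatim the short proof of Proposition~\ref{ACmatrix}, replacing the scalar Jordan blocks by the block Jordan forms $\mathbb{J}_{i,n_i}(\alpha_i)$ and invoking Lemma~\ref{lem:ALB} in place of Lemma~\ref{Jordan anti-layer}. The key observation is that the block decomposition of $A$ dictates a compatible block decomposition of $B$.

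Concretely, first I would write $B=(B_{ij})_{1\le i,j\le r}$ as an $r\times r$ block matrix whose $(i,j)$-entry $B_{ij}$ is of size $in_i\times jn_j$, matching the partition of $A$ into the diagonal blocks $\mathbb{J}_{i,n_i}(\alpha_i)$. Since $A$ is block-diagonal, the product $AB$ has $(i,j)$-block $\mathbb{J}_{i,n_i}(\alpha_i)\, B_{ij}$ and the product $BA$ has $(i,j)$-block $B_{ij}\, \mathbb{J}_{j,n_j}(\alpha_j)$. Hence the single matrix equation $AB-\sigma BA=0$ decouples into the $r^2$ independent equations
\[
\mathbb{J}_{i,n_i}(\alpha_i)\, B_{ij} \;=\; \sigma\, B_{ij}\, \mathbb{J}_{j,n_j}(\alpha_j), \qquad 1\le i,j\le r.
\]

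Next I would apply Lemma~\ref{lem:ALB} (with $s=i$, $t=j$, and parameters $\alpha_1=\alpha_i$, $\alpha_2=\alpha_j$) to each of these equations. The lemma immediately yields that $B_{ij}=0$ whenever $\alpha_i\neq \sigma\alpha_j$, and that otherwise $B_{ij}$ must be of the form ${\rm LB}^\sigma(i,j,\vec{B}_{ij})$ for some free tuple $\vec{B}_{ij}=(B_1,\ldots,B_{\min(i,j)})$ of $n_i\times n_j$ matrices. The converse direction is automatic: plugging any matrix of this form back in, each block equation is satisfied by Lemma~\ref{lem:ALB}, so $AB=\sigma BA$.

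There is no real obstacle here; the only point requiring some care is bookkeeping, namely to ensure that the chosen block sizes $in_i\times jn_j$ are consistent with the conventions of \eqref{eq:block al1}--\eqref{eq:block al2}, where the constituent matrices $B_\ell$ in $\vec{B}_{ij}$ must all share the common size $n_i\times n_j$ so that the formulas make sense. Once this is verified, the proposition follows exactly as in Proposition~\ref{ACmatrix}.
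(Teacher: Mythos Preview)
Your proposal is correct and matches precisely what the paper intends: the paper explicitly states that Lemma~\ref{lem:ALB} and Proposition~\ref{ALBmatrix} are natural generalizations of Lemma~\ref{Jordan anti-layer} and Proposition~\ref{ACmatrix}, and that ``the proofs are again by direct computations and will also be skipped.'' Your argument is exactly the block-by-block reduction via Lemma~\ref{lem:ALB}, parallel to the proof of Proposition~\ref{ACmatrix}, so there is nothing to add.
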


\section{Irreducibility and dimension of the variety $\scrZ_{p,m,r}$}
  \label{sec:Irred}

In this section, we shall prove that the variety $\scrZ_{p,m,r}$ introduced in Definition~\ref{def:Zpmr}
is irreducible, and then compute its dimension.

\subsection{Definitions of $\scrZ_n$, $\scrZ_{p,m,r}$ and $\scrR_A$}

Denote by $M_n(\bbC)$ the space of $n\times n$ matrices over $\bbC$. The {\em anti-commuting variety} is defined to be
\begin{equation}
  \label{eq:Zn}
\scrZ_n =\{ (A, B) \in M_n(\bbC) \times M_n(\bbC) \mid AB+BA=0 \}.
\end{equation}
The group
\[
G= GL_n
\]
acts on $\scrZ_n$ by $g. (A,B) =(gAg^{-1}, gBg^{-1})$.

For an $n\times n$ matrix $A$, denote the set of matrices which anti-commute with $A$ by
\begin{equation}
  \label{eq:AC-A}
 \scrR_A= \{B  \in M_n(\bbC) \mid AB+BA=0 \}.
\end{equation}

Introduce the indexing set
\begin{equation} \label{TPL}
\TPL_n :=\{(p,m,r) \in \bbZ_{\geq0}^3\mid 2p+m+r=n \}.
\end{equation}
Recall $\scrZ_n$ denotes the variety of anti-commuting pairs of $n\times n$ matrices.

\begin{df}
  \label{def:Zpmr}
Let $(p,m,r) \in \TPL_n$.
The (constructible) set $\scrZ_{p,m,r}^\circ$ is defined to be the union of $G$-orbits of $(A,B) \in \scrZ_n$ where $A$ is an arbitrary diagonal matrix of the form
\begin{equation}\label{eq:A}
A=\diag(a_1, -a_1, a_2, -a_2, \ldots, a_p, -a_p,  a_{p+1}, \ldots, a_{p+m},\underbrace{0,\ldots, 0}_r),
\end{equation}
with $a_i\neq \pm a_j$ and $a_i\in \bbC^*$ for any $1\leq i\neq j\leq p+m$.

The variety $\scrZ_{p,m,r}$ is defined to be the Zariski closure of $\scrZ_{p,m,r}^\circ$.
\end{df}

\subsection{The irreducibility of $\scrZ_{p,m,r}$}

Recall $G= GL_n$. Define a map
\begin{equation}
  \label{eq:phi}
\phi_{p,m,r}: G\times \bbC^{p+m} \times \bbC^{2p+r} \longrightarrow \scrZ_n
\end{equation}
by letting
\[
\phi_{p,m,r} \big( g, (a_1,\dots,a_{p+m}), (b_1,c_1,\dots,b_p,c_p,b_{p+1},\dots, b_{p+r}) \big) =(gAg^{-1},gBg^{-1}),
\]
where $A$ is given by \eqref{eq:A} and
\[
B= \rm{diag} \Big(\begin{bmatrix}
0& b_1
\\ c_1&0
\end{bmatrix},\dots,
\begin{bmatrix}
 0& b_p\\
  c_p&0
  \end{bmatrix},
\underbrace{0,\dots,0}_m, b_{p+1},\dots,b_{p+r} \Big).
\]
Then $\phi_{p,m,r}$ is a morphism of algebraic varieties.

\begin{prop}
   \label{prop:irred}
The variety $\scrZ_{p,m,r}$ is irreducible, for each $(p,m,r)\in \TPL_n$. Moreover, we have $\ov{{\rm Im}(\phi_{p,m,r})}= \scrZ_{p,m,r}$.
\end{prop}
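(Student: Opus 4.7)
The plan is to use the morphism $\phi_{p,m,r}$ of \eqref{eq:phi} to realize $\scrZ_{p,m,r}$ as the Zariski closure of the image of an irreducible variety, which will yield both irreducibility and the asserted equality of closures in one stroke. Since $G\times \bbC^{p+m}\times \bbC^{2p+r}$ is irreducible, $\ov{{\rm Im}(\phi_{p,m,r})}$ is automatically an irreducible subvariety of $\scrZ_n$. Moreover, every point in the image is a $G$-conjugate of some pair $(A,B)$ with $A$ of the form \eqref{eq:A}, so ${\rm Im}(\phi_{p,m,r}) \subseteq \scrZ_{p,m,r}^\circ$, and therefore $\ov{{\rm Im}(\phi_{p,m,r})} \subseteq \scrZ_{p,m,r}$.

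For the reverse inclusion $\scrZ_{p,m,r}^\circ \subseteq \ov{{\rm Im}(\phi_{p,m,r})}$, I would first note that both sides are $G$-stable, so it suffices to check that a pair $(A,B)\in\scrZ_{p,m,r}^\circ$ with $A$ already of the form \eqref{eq:A} lies in $\ov{{\rm Im}(\phi_{p,m,r})}$. Applying Proposition~\ref{ACmatrix} with $\sigma=-1$ to $A$, viewed as $\diag(J_1(a_1),J_1(-a_1),\ldots,J_1(a_{p+m}),J_1(0),\ldots,J_1(0))$, an arbitrary $B\in\scrR_A$ has precisely the block shape used in the definition of $\phi_{p,m,r}$, except that its lower-right $r\times r$ block (corresponding to the zero eigenvalues of $A$) is an arbitrary matrix $C\in M_r(\bbC)$ rather than a diagonal one. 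Applying the same proposition with $\sigma=+1$ identifies the stabilizer $Z_G(A)\cong (\bbC^*)^{2p+m}\times GL_r$, whose $GL_r$-factor acts by conjugation on this $r\times r$ block. Since the diagonalizable matrices are dense in $M_r(\bbC)$, one can approximate $B$ by a sequence $B_\varepsilon$ whose last block is diagonalizable; conjugating each $B_\varepsilon$ by a suitable element of $Z_G(A)$ places $(A,B_\varepsilon)$ into ${\rm Im}(\phi_{p,m,r})$, and passing to the limit gives the desired $(A,B)\in \ov{{\rm Im}(\phi_{p,m,r})}$.

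The main obstacle is essentially bookkeeping: carefully reading off from Proposition~\ref{ACmatrix}, applied with $\sigma=\pm 1$ respectively, both the block shape of a general element $B\in\scrR_A$ and the shape of the stabilizer $Z_G(A)$, and then verifying that the $GL_r$ factor of this stabilizer acts on the lower-right $r\times r$ block of $B$ exactly by conjugation. Granted this, the density of diagonalizable matrices in $M_r(\bbC)$ (equivalently, the fact that the Zariski closure of the union of $GL_r$-conjugates of the diagonal matrices is all of $M_r(\bbC)$) completes the argument and yields both $\scrZ_{p,m,r}=\ov{{\rm Im}(\phi_{p,m,r})}$ and its irreducibility simultaneously.
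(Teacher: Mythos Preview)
Your approach coincides with the paper's: both exhibit $\scrZ_{p,m,r}$ as the closure of the image of an irreducible variety under $\phi_{p,m,r}$. The paper is in fact terser, simply asserting that $\scrZ_{p,m,r}$ equals $\ov{\phi_{p,m,r}(T')}$ for the dense open subset $T'$ of the domain cut out by $a_i\in\bbC^*$, $a_i\neq\pm a_j$; your argument via Proposition~\ref{ACmatrix} (with $\sigma=\pm 1$) and the density of diagonalizable $r\times r$ blocks is precisely what justifies the reverse inclusion $\scrZ_{p,m,r}^\circ\subseteq\ov{{\rm Im}(\phi_{p,m,r})}$ that the paper leaves implicit.

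There is, however, one small slip in your forward inclusion. You write ${\rm Im}(\phi_{p,m,r}) \subseteq \scrZ_{p,m,r}^\circ$, but this fails: the domain $G\times\bbC^{p+m}\times\bbC^{2p+r}$ imposes no conditions on the $a_i$, so for example $\phi_{p,m,r}(e,0,0)=(0,0)$ lies in the image yet not in $\scrZ_{p,m,r}^\circ$, whose definition requires $a_i\in\bbC^*$ and $a_i\neq\pm a_j$ (these conditions are part of Definition~\ref{def:Zpmr}, not just of the displayed shape \eqref{eq:A}). The repair is immediate: restrict to the dense open $T'$ where these conditions hold, note $\phi_{p,m,r}(T')\subseteq\scrZ_{p,m,r}^\circ$, and use $\ov{\phi_{p,m,r}(T')}=\ov{{\rm Im}(\phi_{p,m,r})}$ to conclude $\ov{{\rm Im}(\phi_{p,m,r})}\subseteq\scrZ_{p,m,r}$. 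With this adjustment your proof is complete and somewhat more detailed than the paper's own.
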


\begin{proof}
Let $T' \subset G\times \bbC^{p+m} \times \bbC^{2p+r}$ denote the open subset which consists of the triples
\[
\big( g, (a_1,\dots,a_{p+m}), (b_1,c_1,\dots,b_p,c_p,b_{p+1},\dots, b_{p+r}) \big)
\]
such that $a_i\neq \pm a_j$ and $a_i\in \bbC^*$ for $i\neq j$. Clearly $T'$ is irreducible.
By definition, $\scrZ_{p,m,r}$ is the closure of $\phi_{p,m,r} (T')$. Hence, $\scrZ_{p,m,r}$ is irreducible.
\end{proof}
\subsection{Dimension of $\scrZ_{p,m,r}$}

For any $A=(a_{ij})_{n\times n}\in M_n(\bbC)$, denote by
\[
\chi^A(X)=X^n+ \chi^A_{n-1} X^{n-1}+\cdots + \chi^A_1 X +\chi^A_0
\]
its characteristic polynomial.
Then $\chi_k^A$ for all $k$ are polynomials in the $a_{ij}$'s.

For any $(A,B)\in \scrZ_{p,m,r} =\ov{{\rm Im}(\phi_{p,m,r})}$ (see \eqref{eq:phi} and Proposition~\ref{prop:irred}),  we have
\begin{equation}
  \label{eq:rank}
 {\rm rank}(A)\leq 2p+m, \quad {\rm rank}(B)\leq 2p+r,
\end{equation}
and $\chi_{0}^A= \ldots =\chi_{r-1}^A=0 =\chi_0^B= \ldots =\chi_{m-1}^B$.
In particular, ${\rm rank}(A^2)\leq 2p+m$ and ${\rm rank}(B^2)\leq 2p+r$.


Consider the following conditions on $n\times n$ matrices $A, B$:
\begin{align}
&\, \rank(A)=2p+m, \text{ and } \rank(B)=2p+r,
 \label{eq1:U}\\
&\text{ both }  A \text{ and }B \text{ are diagonalizable, }
 \label{eq2:U}\\
&\text { all the nonzero eigenvalues of }A \text{ (respectively, } B  \text{) are distinct},
 \label{eq3:U}\\
&\text{ all the nonzero eigenvalues of }A^2 \text{ (respectively, } B^2  \text{) are distinct}.
 \label{eq4:U}
\end{align}
Introduce the following subset of $\scrZ_{p,m,r}$:
\begin{align}
  \label{eq:UU}
\calU_{p,m,r} =\{ (A,B) \in \scrZ_{p,m,r}^\circ \mid \text{ Conditions~\eqref{eq1:U}--\eqref{eq4:U} hold for } (A, B) \}.
\end{align}
The set $\calU_{p,m,r}$ has several favorable properties, and it will be used often in this paper.

\begin{lem}
  \label{lem:Upmr}
The subset $\calU_{p,m,r}$ of $\scrZ_{p,m,r}$ is open and nonempty.
Moreover, we have
\[
\calU_{p,m,r}\subseteq {\rm Im}(\phi_{p,m,r}),
\qquad
\ov{\calU_{p,m,r}} =\scrZ_{p,m,r}.
\]
\end{lem}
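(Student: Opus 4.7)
The plan is to realize $\calU_{p,m,r}$ as the intersection of $\scrZ_{p,m,r}^\circ$ with an open subset $U\subseteq\scrZ_{p,m,r}$ cut out by the non-vanishing of a short list of polynomial invariants, and then to verify the inclusion $U\subseteq\scrZ_{p,m,r}^\circ$ so that $\calU_{p,m,r}=U$ is itself open. Each of \eqref{eq1:U}--\eqref{eq4:U} is an open condition on $M_n(\bbC)\times M_n(\bbC)$: the rank equalities \eqref{eq1:U} are open on $\scrZ_{p,m,r}$ thanks to the reverse bounds \eqref{eq:rank}, diagonalizability \eqref{eq2:U} will be a consequence of the rank and distinctness conditions once equality of algebraic and geometric multiplicities is established, and \eqref{eq3:U}--\eqref{eq4:U} are detected by the non-vanishing of discriminants of the reduced characteristic polynomials $g(X)=\chi^A(X)/X^r$ and $h(X)=\chi^B(X)/X^m$, together with the non-vanishing of the analogous discriminants of the polynomials whose roots are the squares of the roots of $g,h$.

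The structural heart of the proof is the verification $U\subseteq\scrZ_{p,m,r}^\circ$. For $(A,B)\in U$, I would decompose $\bbC^n=\bigoplus_\mu V_\mu$ into $A$-eigenspaces, with $\dim V_\mu=1$ for each nonzero eigenvalue $\mu$ and $\dim V_0=r$. The identity $AB+BA=0$ yields $BV_\mu\subseteq V_{-\mu}$. By \eqref{eq4:U}, the $2p+m$ nonzero eigenvalues of $A$ split into some number $q$ of $\pm$-pairs and $2p+m-2q$ unpaired ones; each unpaired $V_\mu$ lies in $\ker B$, on each pair $V_\mu\oplus V_{-\mu}$ the operator $B$ takes the form $\left[\begin{smallmatrix}0&\beta\\ \gamma&0\end{smallmatrix}\right]$, and $V_0$ is $B$-invariant. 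A careful rank/kernel count using $\rank B=2p+r$ together with the diagonalizability of $B$ on each $2\times 2$ paired block (which rules out the rank-one nilpotent cases $\beta=0\neq\gamma$ and $\gamma=0\neq\beta$) forces $q=p$, all $\beta_i\gamma_i\neq 0$, and $B|_{V_0}$ invertible and diagonalizable with distinct nonzero eigenvalues. Choosing a basis adapted to this decomposition exhibits $(A,B)$ as $G$-conjugate to some $(A',B')$ with $A'$ in the form \eqref{eq:A}, whence $(A,B)\in\scrZ_{p,m,r}^\circ$.

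For $\calU_{p,m,r}\subseteq{\rm Im}(\phi_{p,m,r})$, I would diagonalize the $r\times r$ block $B'|_{V_0}$ by conjugating with $\diag(I_{2p+m},h)$ for an appropriate $h\in GL_r$; this conjugation fixes $A'$ (whose last $r$ diagonal entries are zero) and delivers a pair visibly in the image of $\phi_{p,m,r}$. Nonemptiness of $\calU_{p,m,r}$ is witnessed by a generic image point of $\phi_{p,m,r}$: choose $a_i\in\bbC^*$ with $a_i\neq\pm a_j$, and $b_i,c_i,b_{p+k}$ so that the products $b_ic_i$ and the squares $b_{p+k}^2$ are nonzero and pairwise distinct (and distinct from one another), which manifestly satisfies \eqref{eq1:U}--\eqref{eq4:U}. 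Finally, since $\scrZ_{p,m,r}$ is irreducible by Proposition~\ref{prop:irred}, every nonempty open subset is dense, yielding $\ov{\calU_{p,m,r}}=\scrZ_{p,m,r}$. The main obstacle is the eigenspace/pairing analysis that forces $q=p$: the joint use of \eqref{eq1:U}, \eqref{eq2:U}, and \eqref{eq4:U} must exclude ``hidden'' $\pm$-pairs in $A$ whose corresponding block in $B$ is zero or rank-one nilpotent, and getting this count exactly right is where all four conditions are essential.
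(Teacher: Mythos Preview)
Your proposed route---show that the open locus $U\subseteq\scrZ_{p,m,r}$ cut out by \eqref{eq1:U}--\eqref{eq4:U} already lies in $\scrZ_{p,m,r}^\circ$, hence equals $\calU_{p,m,r}$---has a real gap at the step ``forces $q=p$.'' Your rank/kernel count gives only one inequality: writing $q'\le q$ for the number of $\pm$-pairs whose $2\times2$ block of $B$ is invertible, the identity $2q'+\rank(B|_{V_0})=\rank B=2p+r$ together with the trivial bound $\rank(B|_{V_0})\le\dim V_0=r$ yields $q\ge q'\ge p$, and nothing more. Diagonalizability of $B$ kills the rank-one blocks but does not prevent extra $\pm$-pairs in the spectrum of $A$ on which $B$ acts by zero (so $q>q'$), nor does it force $\rank(B|_{V_0})=r$ (so $q'>p$ is not excluded either). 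For instance, with $(p,m,r)=(0,2,2)$ the pair $(A_0,B_0)=\big(\diag(1,-1,0,0),\,\diag(0,0,D)\big)$, for $D\in GL_2$ diagonalizable with distinct eigenvalues, lies in $\scrZ_{0,2,2}$ as the limit of $\big(\diag(1,-1+t,0,0),\,\diag(0,0,D)\big)\in\scrZ_{0,2,2}^\circ$, satisfies \eqref{eq1:U}--\eqref{eq3:U}, yet has $q=1>0=p$ and so is not in $\scrZ_{0,2,2}^\circ$. Whether \eqref{eq4:U} excludes such points depends on how one reads that condition, and in any case your counting argument does not invoke \eqref{eq4:U} at the point where $q=p$ is asserted.

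The paper sidesteps this entirely by using the \emph{definition} $\calU_{p,m,r}\subseteq\scrZ_{p,m,r}^\circ$: for $(A,B)\in\calU_{p,m,r}$ one knows \emph{a priori} from Definition~\ref{def:Zpmr} that $A$ is $G$-conjugate to the diagonal form \eqref{eq:A} with exactly $p$ sign-pairs, so there is no $q$ to recover. After conjugating, Proposition~\ref{ACmatrix} dictates the block shape of $B$, and one finishes by diagonalizing the trailing $r\times r$ block. Your treatment of nonemptiness, of density via irreducibility, and of the inclusion into ${\rm Im}(\phi_{p,m,r})$ once the block structure is in hand, all agree with the paper.
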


\begin{proof}
The first assertion follows by definition as each of \eqref{eq1:U}--\eqref{eq4:U} is clearly a nonempty open condition.
Since $\calU_{p,m,r}$ is open in $\scrZ_{p,m,r}$, it follows from the irreducibility of $\scrZ_{p,m,r}$ (see Proposition \ref{prop:irred}) that $\ov{\calU_{p,m,r}} =\scrZ_{p,m,r}$.

It remains to show that $\calU_{p,m,r}\subseteq {\rm Im}(\phi_{p,m,r}).$ For any $(A,B)\in \calU_{p,m,r}$, by Definition \ref{def:Zpmr} and \eqref{eq:UU}, $A$ has $2p+m$ distinct nonzero eigenvalues, $a_1,-a_1,\dots,a_p,-a_p,a_{p+1},\dots,a_{p+m}$, for some $a_1,\dots,a_{p+m}$ such that $a_i\neq \pm a_j$ for $i\neq j$. Similarly, $B$ has $2p+r$ distinct nonzero eigenvalues. Note that both $A$ and $B$ are diagonalizable, that is, there exists $g_1\in G$ such that
\[
g_1Ag_1^{-1}={\diag}(a_1,-a_1,\dots, a_p,-a_p,a_{p+1},\dots,a_{p+m}\underbrace{0,\dots,0}_r).
\]
It follows by Proposition~ \ref{ACmatrix} that
\[
g_1Bg_1^{-1}={\diag}(C_1,\dots,C_p,\underbrace{0,\dots,0}_{m},D),
\]
where $C_i= \begin{bmatrix} 0& b_i\\ c_i&0 \end{bmatrix}$ for $1\leq i\leq p$, and $D$ is arbitrary. As we know $B$ has $2p+r$ distinct nonzero eigenvalues, we have $b_ic_i\neq 0$ and $D$ has $r$ distinct nonzero eigenvalues. Thus there exists $P \in GL_r$ such that $PDP^{-1}=\diag ( b_{p+1},\dots ,b_{p+r} )$. Letting $g =\diag(I_{2p+m}, P)\cdot g_1$, we have
\begin{align}
gAg^{-1} &=A' \stackrel{\rm def}{=} {\diag}(a_1,-a_1,\dots, a_p,-a_p,a_{p+1},\dots,a_{p+m}, \underbrace{0,\dots,0}_r),
  \label{eq:gAg}\\
gBg^{-1} &= B' \stackrel{\rm def}{=} {\diag}(C_1,\dots,C_p,\underbrace{0,\dots,0}_{m}, b_{p+1},\dots ,b_{p+r} ),
 \label{eq:gBg}
\end{align}
where $C_i= \begin{bmatrix} 0& b_i\\ c_i&0 \end{bmatrix}$ for any $1\leq i\leq p$. Therefore,
we have $(gAg^{-1}, gBg^{-1}) \in {\rm Im}(\phi_{p,m,r})$ and then $(A,B) \in {\rm Im}(\phi_{p,m,r})$.
\end{proof}


\begin{prop}\label{prop:dimsion of Zprm}
For $(p,m,r) \in\TPL_n$, we have $\dim \scrZ_{p,m,r}=n^2+p$.
\end{prop}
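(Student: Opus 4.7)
The plan is to apply the fiber dimension theorem to the parametrizing morphism $\phi_{p,m,r}$ of \eqref{eq:phi}. Since $\scrZ_{p,m,r} = \ov{\mathrm{Im}(\phi_{p,m,r})}$ by Proposition~\ref{prop:irred}, one has $\dim\scrZ_{p,m,r} = \dim(\text{domain}) - \dim(\text{generic fiber})$. The domain has dimension $\dim G + (p+m) + (2p+r) = n^2 + 3p + m + r$, so the task reduces to showing the generic fiber of $\phi_{p,m,r}$ has dimension $2p + m + r$.

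To compute this generic fiber, I would pick a generic point in $\mathrm{Im}(\phi_{p,m,r}) \cap \calU_{p,m,r}$, which is non-empty by Lemma~\ref{lem:Upmr}, writing it as $\phi_{p,m,r}(g_0,\vec a_0,\vec v_0)$ with associated normal form $(A'_0, B'_0)$ as in \eqref{eq:gAg}--\eqref{eq:gBg}. After the substitution $h = g_0^{-1}g$, the fiber is identified with the set of $h \in G$ such that $(hA'_0h^{-1}, hB'_0h^{-1})$ again satisfies the normal form of Definition~\ref{def:Zpmr} (the parameters $(\vec a, \vec v)$ being recovered from its diagonal/block entries). I would then analyze this set in two stages. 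First, since $A'_0$ has $2p + m$ distinct nonzero eigenvalues with one-dimensional eigenspaces together with an $r$-dimensional zero eigenspace, the requirement that $hA'_0h^{-1}$ be diagonal of the specified form confines $h$, modulo a discrete permutation group preserving the $\pm a_i$-pairing, to the subgroup $(\bbC^*)^{2p+m} \times GL_r$. Second, by Proposition~\ref{ACmatrix}, conjugation by any such $h$ preserves the off-diagonal form of each $2\times 2$ block $C_i$ (merely rescaling its entries $(b_i,c_i)\mapsto (t_{2i-1}t_{2i}^{-1}b_i, t_{2i}t_{2i-1}^{-1}c_i)$) and sends the $r\times r$ diagonal block $D_0=\diag(b_{0,p+1},\dots,b_{0,p+r})$ to $PD_0P^{-1}$; the latter being diagonal with distinct entries forces $P$ into the normalizer of the diagonal torus in $GL_r$, a locus of dimension $r$.

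Tallying the continuous contributions, the generic fiber has dimension $(2p+m) + r = 2p+m+r$, whence
\[
\dim\scrZ_{p,m,r} = (n^2 + 3p + m + r) - (2p + m + r) = n^2 + p.
\]
The main subtlety is correctly accounting for the $1$-parameter scaling symmetry $(b_i,c_i)\mapsto (tb_i, t^{-1}c_i)$ within each $2\times 2$ block $C_i$: this symmetry is absorbed into the torus $(\bbC^*)^{2p+m}$ acting on the nonzero eigenspaces of $A'_0$, and is precisely the source of the extra ``$+p$'' beyond what a purely semisimple orbit closure would give.
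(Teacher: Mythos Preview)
Your proposal is correct and follows essentially the same route as the paper's proof: both restrict $\phi_{p,m,r}$ to the preimage of the open set $\calU_{p,m,r}$, note the source has dimension $n^2+3p+m+r$, and compute the fiber over a normal-form point to have dimension $2p+m+r=n$ by analyzing which conjugating elements $h$ preserve the normal form. The paper carries out the fiber analysis by writing out the entries of $h$ explicitly, while you phrase it more structurally via the centralizer/normalizer decomposition $(\bbC^*)^{2p+m}\times GL_r$ followed by the torus-normalizer constraint on the $GL_r$ factor; the content is the same.
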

\begin{proof}
It suffices to show that $\dim \calU_{p,m,r}=n^2+p$, since $\scrZ_{p,m,r}$ is irreducible.

Denote by $V=\phi_{p,m,r}^{-1}(\calU_{p,m,r})$, an open subset in $G\times \bbC^{p+m}\times \bbC^{2p+r}$, and then the restriction of $\phi_{p,m,r}$ to $V$ is also a regular map, which is also denoted by $\phi_{p,m,r}$. Clearly, $\calU_{p,m,r}$ and $V$ are irreducible varieties, and $\dim V=n^2+3p+m+r=n^2+n+p$.

Below we shall freely use the notations in the proof of Lemma \ref{lem:Upmr} above.

For any $(A,B) \in \calU_{p,m,r}$, we have $(A', B') \in \calU_{p,m,r}$, where $A'=gAg^{-1}, B'=gBg^{-1}$ are given in \eqref{eq:gAg}--\eqref{eq:gBg}.  Let $W_p =\bbZ_2^p\rtimes S_p$. By definition, up to a permutation action by the finite group $W_p \times S_m \times S_r$,  a point in $\phi_{p,m,r}^{-1}((A',B'))$ is  of the form
\[
\big(h, (a_1,\dots,a_p,a_{p+1},\dots,a_{p+m}), (d_1,e_1,\dots,d_p,e_p,d_{p+1},\dots,d_{p+r}) \big)\in G\times \bbC^{p+m}\times \bbC^{p+r},
\]
such that
\begin{align}
h\, {\diag}(a_1,-a_1,\dots,a_p,-a_p, a_{p+1},\dots,a_{p+m},\underbrace{0,\cdots,0}_{r})h^{-1}=A',
 \label{eq:A3} \\
h\, {\diag}(D_1,\dots,D_p, \underbrace{0,\dots,0}_{m},d_{p+1},\dots ,d_{p+r} )h^{-1}= B',
 \label{eq:B3}
\end{align}
where $D_i=\begin{bmatrix}0& d_i\\ e_i&0 \end{bmatrix}$ for $1\leq i\leq p$.
So from (\ref{eq:A3})--(\ref{eq:B3}), we have
$d_{p+i}=b_{p+i},\text{ for }1\leq i\leq r,$
and
$$h={\diag}(h_1,\dots,h_{2p},h_{2p+1}, \dots,h_{2p+r},h_{2p+r+1},\dots,h_{n} )$$
for any nonzero $h_i$ for $1\leq i\leq n$.
Furthermore, it follows from (\ref{eq:B3}) that
\begin{equation*}
h_{2i-1}d_i =b_ih_{2i}, \qquad
h_{2i}e_i = h_{2i-1}c_i,
\end{equation*}
for $1\leq i\leq p$.
Then
any point in $\phi_{p,m,r}^{-1}((A',B'))$ is of form
$$(h, (a_1,\dots,a_p,a_{p+1},\dots,a_{p+m}), (u_1b_1, u_1^{-1}{c_1},\dots,u_pb_p, u_p^{-1} {c_p}, b_{p+1},\dots,b_{p+r})),$$
where $h={\diag}(h_1,u_1h_1,\dots,h_{2p-1},u_ph_{2p-1},h_{2p+1},\dots, h_{n})$, for nonzero scalars $u_1$, $\dots$, $u_p$, $h_1$, $h_3$, $\dots$, $h_{2p-1}$, $h_{2p+1}$, $\dots$, $h_n$.
So $\dim \phi_{p,m,r}^{-1}((A',B'))=n$.
By a standard result in algebraic geometry, we have
$\dim \calU_{p,m,r}=\dim V-\dim \phi_{p,m,r}^{-1}((A',B'))=n^2+p$.
\end{proof}

\subsection{Some useful lemmas}

The following two lemmas will be useful in later sections.
\begin{lem}
  \label{lem:ZZZ12}
For $(A_1,B_1)\in\scrZ_{p_1,m_1,r_1}$ and  $(A_2,B_2)\in\scrZ_{p_2,m_2,r_2}$, we denote
$A=\diag(A_1,A_2)$, and $B=\diag(B_1,B_2)$. Then we have $(A,B)\in\scrZ_{p_1+p_2,m_1+m_2,r_1+r_2}$.
\end{lem}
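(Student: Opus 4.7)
The plan is to reduce to the dense open stratum $\scrZ^\circ_{p_i,m_i,r_i}$ and then use a one-parameter rescaling to eliminate any eigenvalue coincidences between the two summands. A block computation gives $AB+BA=\diag(A_1B_1+B_1A_1,\,A_2B_2+B_2A_2)=0$, so $(A,B)\in\scrZ_n$; only the identification of the correct irreducible component requires work.

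First I would observe that the block-diagonal embedding
\[
\Phi\colon\scrZ_{n_1}\times\scrZ_{n_2}\longrightarrow\scrZ_n,\qquad \bigl((A_1,B_1),(A_2,B_2)\bigr)\longmapsto\bigl(\diag(A_1,A_2),\diag(B_1,B_2)\bigr),
\]
is a morphism of algebraic varieties, while $\scrZ_{p_1+p_2,m_1+m_2,r_1+r_2}$ is closed by definition. Since $\scrZ_{p_i,m_i,r_i}=\ov{\scrZ^\circ_{p_i,m_i,r_i}}$, taking closures reduces the lemma to showing
\[
\Phi\bigl(\scrZ^\circ_{p_1,m_1,r_1}\times\scrZ^\circ_{p_2,m_2,r_2}\bigr)\subseteq\scrZ_{p_1+p_2,m_1+m_2,r_1+r_2}.
\]

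Now fix $(A_i,B_i)\in\scrZ^\circ_{p_i,m_i,r_i}$. Since $\scrZ^\circ_{p_i,m_i,r_i}$ is a union of $GL_{n_i}$-orbits and $\scrZ_{p_1+p_2,m_1+m_2,r_1+r_2}$ is invariant under conjugation by $\diag(g_1,g_2)\in GL_n$, I may assume each $A_i$ is already in the diagonal form \eqref{eq:A} with parameters $a_{i,1},\ldots,a_{i,p_i+m_i}$. For any $t\in\bbC^*$, the rescaled pair $(tA_2,B_2)$ still anti-commutes, and $tA_2$ is of the form \eqref{eq:A} with parameters $ta_{2,1},\ldots,ta_{2,p_2+m_2}$, so $(tA_2,B_2)\in\scrZ^\circ_{p_2,m_2,r_2}$. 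An appropriate permutation matrix $P\in GL_n$ reorders the diagonal of $\diag(A_1,tA_2)$ into the arrangement prescribed by \eqref{eq:A}, and the result is a genuine representative of that form for the triple $(p_1+p_2,m_1+m_2,r_1+r_2)$ precisely when
\[
a_{1,j}\neq\pm t\,a_{2,k} \qquad \text{for all } j\in\{1,\ldots,p_1+m_1\},\ k\in\{1,\ldots,p_2+m_2\}.
\]
These are finitely many conditions on $t$, so admissible $t$ form an open dense subset of $\bbC^*$ containing a sequence $t_\nu\to 1$. For each such $t_\nu$, the pair $(\diag(A_1,t_\nu A_2),\diag(B_1,B_2))$ lies in $\scrZ^\circ_{p_1+p_2,m_1+m_2,r_1+r_2}$ by Definition~\ref{def:Zpmr}, and the limit $t_\nu\to 1$ deposits $(\diag(A_1,A_2),\diag(B_1,B_2))$ in the closure $\scrZ_{p_1+p_2,m_1+m_2,r_1+r_2}$.

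The only genuinely delicate point is verifying that generic $t$ makes the combined spectrum compatible with the prescribed triple: besides the obvious multiplicity collisions $a_{1,j}=t\,a_{2,k}$, one must rule out the more subtle event in which a non-paired eigenvalue $a_{1,j}$ of $A_1$ becomes the negative of some eigenvalue of $tA_2$, which would secretly create a new $\pm$-pair and thereby inflate the pair count $p_1+p_2$. Both kinds of coincidence are cut out by the same finitely many algebraic conditions in $t$, so the genericity argument handles them uniformly, and no serious obstacle remains.
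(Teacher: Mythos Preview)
Your argument is correct and follows essentially the same approach as the paper: deform one summand so that the combined eigenvalues avoid all forbidden coincidences, observe the deformed pair lies in $\scrZ^\circ_{p_1+p_2,m_1+m_2,r_1+r_2}$, and pass to the limit in the closure. The only cosmetic differences are that the paper uses an additive shift $a_i\mapsto a_i+t$ on eigenvalues rather than your multiplicative rescaling $A_2\mapsto tA_2$, and it phrases the reduction via the open set $\calU_{p_i,m_i,r_i}$ rather than $\scrZ^\circ_{p_i,m_i,r_i}$.
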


\begin{proof}
Set $p=p_1+p_2$, $m=m_1+m_2$, $r=r_1+r_2$. 

%

There is a natural embedding $\theta: \scrZ_{p_1,m_1,r_1}\times \scrZ_{p_2,m_2,r_2} \hookrightarrow \scrZ_{2p+m+r}$ given by
\[
\theta\big((A_1,B_1),(A_2,B_2)\big)= \big(\diag(A_1,A_2),\diag(B_1,B_2) \big).
\]
Let
\[
\calU'= \big\{\big((A_1,B_1),(A_2,B_2)\big)\in \calU_{p_1,m_1,r_1}\times \calU_{p_2,m_2,r_2} \mid \theta\big((A_1,B_1),(A_2,B_2)\big)\in \calU_{p,m,r} \big\}.
\]

For any $\big((A_1,B_1),(A_2,B_2)\big)\in \calU_{p_1,m_1,r_1}\times \calU_{p_2,m_2,r_2}$, without loss of generality, we assume $A_i$ $(i=1,2)$ are diagonal matrices, i.e.,
\begin{align*}
A_1&=\diag(a_1, -a_1, a_2, -a_2, \ldots, a_{p_1}, -a_{p_1},  a_{p_1+1}, \ldots, a_{p_1+m_1},\underbrace{0,\ldots, 0}_{r_1}),
\\
A_2 &=\diag(b_1, -b_1, b_2, -b_2, \ldots, b_{p_2}, -b_{p_2},  b_{p_2+1}, \ldots, b_{p_2+m_2},\underbrace{0,\ldots, 0}_{r_2}).
\end{align*}
Let $A_2(t)=\diag(b_1+t, -b_1-t, \ldots, b_{p_2}+t, -b_{p_2}-t,  b_{p_2+1}+t, \ldots, b_{p_2+m_2}+t,\underbrace{0,\ldots, 0}_{r_2})$.
Then $\big((A_1,B_1),(A_2(t),B_2)\big)\in \calU'$ for $t\in \bbC^*$ close to $0$. And when $t$ approaches $0$, the limit $\big((A_1,B_1),(A_2(t),B_2)\big)$ is $\big((A_1,B_1),(A_2,B_2)\big)$.
So $\calU_{p_1,m_1,r_1}\times \calU_{p_2,m_2,r_2}\subseteq \ov{\calU'}$. It follows that $\overline{\calU_{p_1,m_1,r_1}\times \calU_{p_2,m_2,r_2} }= \ov{\calU'}$ since $\calU_{p_1,m_1,r_1}\times \calU_{p_2,m_2,r_2}\supseteq \calU'$. Now, using the irreducibility of $\scrZ_{p_i,m_i,r_i}$ from Proposition~\ref{prop:irred}, we see that
\[
\ov{\calU'}=\ov{\calU_{p_1,m_1,r_1}\times \calU_{p_2,m_2,r_2}}  
=\scrZ_{p_1,m_1,r_1}\times \scrZ_{p_2,m_2,r_2}.
\]
Therefore, by a standard fact in topology we have
$\theta(\scrZ_{p_1,m_1,r_1}\times \scrZ_{p_2,m_2,r_2})
\subseteq \ov{\theta(\calU')}
\subseteq \scrZ_{p,m,r}$.
The lemma is proved.
\end{proof}

\begin{lem}
  \label{lem:switch}
Let $(p,m,r) \in \TPL_n$.   We have an isomorphism of varieties $\scrZ_{p,m,r}\rightarrow \scrZ_{p,r,m}$, which sends $(A,B)$ to $(B,A)$.
\end{lem}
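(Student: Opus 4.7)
The plan is to observe that the swap map $\tau\colon \scrZ_n\to \scrZ_n$, $\tau(A,B)=(B,A)$, is an involutive automorphism of algebraic varieties, because the defining relation $AB+BA=0$ is symmetric in $A$ and $B$, and $\tau$ is $G$-equivariant for the diagonal conjugation action. It therefore suffices to prove $\tau(\scrZ_{p,m,r})\subseteq \scrZ_{p,r,m}$: the reverse inclusion follows by the identical argument with $m$ and $r$ interchanged, and combining the two inclusions with $\tau^2=\mathrm{id}$ yields the claimed isomorphism.

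By Lemma~\ref{lem:Upmr} we have $\scrZ_{p,m,r}=\overline{\calU_{p,m,r}}$, so by continuity of $\tau$ it is enough to show $\tau(\calU_{p,m,r})\subseteq \scrZ_{p,r,m}$. Fix $(A,B)\in \calU_{p,m,r}$. Using the proof of Lemma~\ref{lem:Upmr}, conjugate by some $g\in G$ so that $A$ and $B$ take the explicit normal forms \eqref{eq:gAg}--\eqref{eq:gBg}; in particular $B=\diag(C_1,\dots,C_p,0,\dots,0,b_{p+1},\dots,b_{p+r})$ with $C_i=\begin{bmatrix}0 & b_i\\ c_i & 0\end{bmatrix}$. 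The $2\times 2$ block $C_i$ has eigenvalues $\pm\sqrt{b_ic_i}$, so the nonzero eigenvalues of $B$ are precisely $\pm\sqrt{b_1c_1},\dots,\pm\sqrt{b_pc_p},b_{p+1},\dots,b_{p+r}$, accounting for $2p+r$ nonzero eigenvalues together with $m$ zero eigenvalues. Conditions \eqref{eq1:U}--\eqref{eq4:U} imposed on $B$ guarantee that $b_ic_i\ne 0$, that the numbers $\sqrt{b_1c_1},\dots,\sqrt{b_pc_p}$ are pairwise distinct, and (using that $B^2$ has distinct nonzero eigenvalues) that $\{b_{p+1},\dots,b_{p+r}\}$ is disjoint from $\{\pm\sqrt{b_ic_i}\}$ and itself contains no $\pm$-pair.

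A further diagonalising and permuting conjugation then brings $B$ into the standard shape
\[
\diag\bigl(\sqrt{b_1c_1},-\sqrt{b_1c_1},\dots,\sqrt{b_pc_p},-\sqrt{b_pc_p},\,b_{p+1},\dots,b_{p+r},\underbrace{0,\dots,0}_m\bigr),
\]
which is precisely the normal form \eqref{eq:A} prescribed in Definition~\ref{def:Zpmr} for the first component of an element of $\scrZ_{p,r,m}^\circ$ (with the indices $m$ and $r$ of the ambient triple exchanged). Hence $\tau(A,B)=(B,A)\in\scrZ_{p,r,m}^\circ\subseteq \scrZ_{p,r,m}$, which completes the plan. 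The one point that requires care is the verification that the openness conditions \eqref{eq3:U}--\eqref{eq4:U} on $B$ and $B^2$ translate into the paired/unpaired/zero eigenvalue pattern demanded by Definition~\ref{def:Zpmr}; every other step is formal from the symmetry of the defining equation and the density of $\calU_{p,m,r}$ in $\scrZ_{p,m,r}$.
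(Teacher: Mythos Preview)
Your proof is correct and follows essentially the same approach as the paper: both arguments use the density of $\calU_{p,m,r}$ in $\scrZ_{p,m,r}$ and the continuity of the swap $\tau$ to reduce to checking that $\tau(\calU_{p,m,r})\subseteq \scrZ_{p,r,m}$. The paper asserts the stronger equality $\calU_{p,m,r}=\tau(\calU_{p,r,m})$ ``by definition'' without further comment, whereas you carefully unpack the eigenvalue analysis of the normal form \eqref{eq:gBg} to verify that $B$ really has the $\pm$-paired structure required by Definition~\ref{def:Zpmr}; this extra care is justified, since the claim is not literally immediate from the definitions and relies on conditions \eqref{eq3:U}--\eqref{eq4:U} in the way you indicate.
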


\begin{proof}


Define an injective morphism $\theta:\scrZ_{p,m,r}\rightarrow \scrZ_{2p+m+r}$ by $\theta(A,B)=(B,A)$. We have $\calU_{p,m,r} =  \theta (\calU_{p,r,m})$ by definition; cf. \eqref{eq:UU}. Therefore, we obtain a morphism $\theta: \scrZ_{p,m,r}\rightarrow \scrZ_{p,r,m}$, $(A,B) \mapsto (B,A)$. By switching $m$ and $r$, we obtain a morphism $\theta': \scrZ_{p,r,m}\rightarrow \scrZ_{p,m,r}$,  inverse to $\theta$.
\end{proof}

\section{The variety $\scrZ_n$ is covered by the subvarieties $\scrZ_{p,m,r}$}
  \label{sec:cover}

In this section, we shall prove that $\scrZ_n$ is the union of its subvarieties $\scrZ_{p,m,r}$, where the triples $(p,m,r)$ run over the indexing set $\TPL_n$ \eqref{TPL}.

\subsection{$\scrR_A$ for $A$ with eigenvalues $\pm a \in \bbC^*$}

We start with $A$ being diagonalizable.

\begin{lem}\label{lem: diagonlised matrix}
Let $A=\diag(\underbrace{a,\ldots,a}_s,\underbrace{-a\ldots,-a}_t)$ with $a\in \bbC^*$ and $n=s+t$.
Then for any $B\in \scrR_A$, the following hold:
\begin{enumerate}
\item The rank of $B$ is at most $2\min(s,t)$;

\item  $(A,B)\in \scrZ_{\min(s,t),|s-t|,0}$.
\end{enumerate}
\end{lem}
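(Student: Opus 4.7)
The plan is to use Proposition~\ref{ACmatrix} to pin down the shape of $B$, deduce (1) immediately from that block structure, prove (2) by placing a generic $B\in\scrR_A$ into an explicit normal form that arises as a degeneration from $\scrZ_{\min(s,t),|s-t|,0}^\circ$, and finally close up on the affine space $\scrR_A$. Without loss of generality I assume $s\le t$; the opposite case follows by swapping the roles of $a$ and $-a$.

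For (1), Proposition~\ref{ACmatrix} applied with $\sigma=-1$ (after a permutation gathering the $+a$-eigenvectors first) forces $B=\begin{pmatrix}0 & X\\ Y & 0\end{pmatrix}$, with $X$ an $s\times t$ and $Y$ a $t\times s$ matrix over $\bbC$. Since $\mathrm{Im}(B)=\mathrm{Im}(X)\oplus\mathrm{Im}(Y)$ inside $V_+\oplus V_-$, we obtain $\rank(B)=\rank(X)+\rank(Y)\le 2s=2\min(s,t)$.

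For (2), I would restrict first to the open dense locus $\scrR_A^{\mathrm{gen}}\subset\scrR_A$ on which $X,Y$ both have full rank $s$ and $XY$ is diagonalizable with $s$ distinct nonzero eigenvalues $\mu_1,\ldots,\mu_s$. Picking eigenvectors $v_i\in V_+$ of $XY$ with $XYv_i=\mu_i v_i$ and setting $w_i:=Yv_i\in V_-$, invertibility of $XY$ forces $\mathrm{Im}(Y)\cap\ker(X)=0$, so $V_-=\mathrm{span}(w_1,\ldots,w_s)\oplus\ker(X)$. Choosing any basis $u_1,\ldots,u_{t-s}$ of $\ker(X)$ and reordering to the interleaved basis $(v_1,w_1,\ldots,v_s,w_s,u_1,\ldots,u_{t-s})$, the pair $(A,B)$ takes the canonical form
\[
\tilde A=\diag(a,-a,\ldots,a,-a,\underbrace{-a,\ldots,-a}_{t-s}),
\]
\[
\tilde B=\diag\!\left(\begin{pmatrix}0 & \mu_1\\ 1 & 0\end{pmatrix},\ldots,\begin{pmatrix}0 & \mu_s\\ 1 & 0\end{pmatrix},0_{t-s}\right).
\]
I would then place $(\tilde A,\tilde B)$ inside $\scrZ_{s,t-s,0}$ by choosing generic one-parameter curves $a_i(u)\to a$ for $1\le i\le s$ and $a_{s+j}(u)\to -a$ for $1\le j\le t-s$ satisfying the distinctness and nonvanishing conditions of \eqref{eq:A} for $u\ne 0$, and setting $A_0(u)=\diag(a_1(u),-a_1(u),\ldots,a_s(u),-a_s(u),a_{s+1}(u),\ldots,a_t(u))$. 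A block-by-block $2\times 2$ calculation shows $A_0(u)\tilde B+\tilde B A_0(u)=0$, so $(A_0(u),\tilde B)\in\scrZ_{s,t-s,0}^\circ$, and the limit as $u\to 0$ gives $(\tilde A,\tilde B)\in\scrZ_{s,t-s,0}$. Since the change of basis to the interleaved basis is a $G$-action and $\scrZ_{s,t-s,0}$ is $G$-invariant, this yields $(A,B)\in\scrZ_{s,t-s,0}$ for every $B\in\scrR_A^{\mathrm{gen}}$.

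To finish, the set $\{B\in\scrR_A\mid (A,B)\in\scrZ_{s,t-s,0}\}$ is closed in the affine space $\scrR_A$ (being the intersection of the closed subvariety $\scrZ_{s,t-s,0}$ with $\{A\}\times\scrR_A$) and contains the dense open subset $\scrR_A^{\mathrm{gen}}$; hence it equals $\scrR_A$, which proves (2). The main obstacle I anticipate is the normal-form step in the generic case, in particular using the eigendecomposition of $XY$ to build a compatible basis of $V_-$ and to bring $(A,B)$ to the form $(\tilde A,\tilde B)$; once that is in hand, the degeneration and closure steps are routine.
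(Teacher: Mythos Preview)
Your proof is correct, but the route differs from the paper's. For part~(1) both arguments are identical. For part~(2), the paper first permutes $A$ to the interleaved form $A'=\diag(a,-a,\ldots,a,-a,\underbrace{a,\ldots,a}_{s-t})$ (taking $s\ge t$), then on the open set of $B\in\scrR_{A'}$ with $2t$ distinct nonzero eigenvalues it uses the one-line observation that $B\xi=\alpha\xi$ implies $B(A'\xi)=-\alpha(A'\xi)$ to conclude that the nonzero eigenvalues of $B$ occur in $\pm$ pairs; this immediately places $(B,A')$ in $\scrZ_{t,0,s-t}^\circ$, and the flipping symmetry Lemma~\ref{lem:switch} then gives $(A',B)\in\scrZ_{t,s-t,0}$. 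Your argument instead avoids Lemma~\ref{lem:switch} altogether: you diagonalize $XY$ to build an explicit simultaneous normal form $(\tilde A,\tilde B)$ and then exhibit by hand a one-parameter family $(A_0(u),\tilde B)\in\scrZ_{s,t-s,0}^\circ$ degenerating to it. The paper's approach is shorter once the flipping lemma is in place, while yours is more self-contained and makes the degeneration completely explicit; both close up over $\scrR_A$ in the same way.
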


\begin{proof}
Without loss of generality, we assume that $s\geq t$.

(1). Any $B\in \scrR_A$ can be written  as
\begin{align*}
B =
\begin{bmatrix}
0   & B'\\
B'' &0
\end{bmatrix}
\end{align*}
for some matrices $B'\in M_{s\times t}(\bbC)$ and $B''\in M_{t\times s}(\bbC)$ of the form
as in Lemma \ref{ACmatrix}. Thus the rank of $B$ is at most $2t$ for $t\leq s$.

(2). There exists an invertible matrix $P$ such that
\[
A'=P^{-1}AP=\diag(\underbrace{a,-a,a,-a,\ldots,a,-a}_t,\underbrace{a,\ldots,a}_{s-t}).
\]
It suffices to show $(A',B)\in \scrZ_{t,s-t,0}$ for any $B\in \scrR_{A'}$.

Let $U$ be the subset of $\scrR_{A'}$ which consists of all matrices $B \in \scrR_{A'}$ which have $2t$ distinct nonzero eigenvalues. Then $U$ is a nonempty open set of $\scrR_{A'}$ since
\[
U':= \left \{\diag \Big(
\begin{bmatrix}
  0   & b_1    \\
c_1  & 0
 \end{bmatrix},\ldots,\begin{bmatrix}
  0   & b_t    \\
c_t & 0
 \end{bmatrix},
 \underbrace{0,\ldots,0}_{s-t}
\Big) \mid  b_ic_i\neq b_jc_j \  \text{and} \   b_ic_i\in \bbC^*, \ \forall i\neq j \right\}\subseteq U.
\]
(Actually, one can further show that $(A',B)$ for any $B\in U$ is similar to $(A',B')$ for some $B' \in U'$. We will not use this fact below.)

Assume $\alpha $ is a nonzero eigenvalue of $B\in U$ with an eigenvector $\xi$ such that $B\xi=\alpha\xi$.
Then $B(A'\xi)=-A'(B\xi)=-A'(\alpha\xi)=-\alpha(A'\xi)$,
which implies that $-\alpha$ is an eigenvalue of $B$ (note $A'\xi \neq 0$ since $A'$ is invertible).
Hence $\alpha$ is a nonzero eigenvalue of $B$ if and only if so is $-\alpha$, i.e.,
the nonzero eigenvalues of $B$ appears in pairs.
Hence, we have $(B,A')\in \scrZ_{t,0,s-t}$. By Lemma \ref{lem:switch} we have $(A',B)\in \scrZ_{t,s-t,0}$ for any $B\in U$, and then for any $B\in \ov{U} =\scrR_{A'}$ (since $\scrR_{A'}$ is a vector space).
This completes the proof.
\end{proof}

For any vector $\vec{a}=(a_1,\dots,a_n)$,  let $J_n(\vec{a})$ denote the following matrix
$$
J_n(\vec{a})=\left[\begin{array}{ccccccc}
 a_1     &    1                                           \\
         &a_2    &1                                        \\
         &       &   \ddots &  \ddots                      \\
         &       &          &\ddots      &  1             \\
         &       &          &            &a_n
\end{array}\right].
$$
In particular, if $\vec{a}=(\underbrace{a,\dots,a}_n)$,
then $J_n(\vec{a}) =J_n(a)$ is the $n\times n$ Jordan block of eigenvalue $a$.

The next two lemmas, Lemmas~\ref{lem:AC2dist} and \ref{lem:AC2a}, are generalizations of Lemma~\ref{Jordan anti-layer}. They will be used (only) in the proof of Proposition~\ref{prop:block a,-a} below.

\begin{lem}\label{lem:AC2dist}
For any $m,n\geq 1$,
let $a_1,\ldots,a_m,a_{m+1},\ldots,a_{m+n}\in \bbC^*$ with $a_j\neq -a_i$ for any $i, j$.
For any $\vec{a}=(a_1,\dots, a_{m})$ and $\vec{b}=(a_{m+1},\dots,a_{m+n})$,
the matrix $B=(b_{ij})_{m\times n}$ satisfies that $J_m(\vec{a})B+BJ_n(\vec{b})=0$ if and only if
$B=0$.
\end{lem}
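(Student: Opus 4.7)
The plan is to reduce the matrix equation $J_m(\vec{a}) B + B J_n(\vec{b}) = 0$ to an entry-by-entry linear recurrence in the coefficients of $B=(b_{ij})$, and then to solve that recurrence by a single sweep through the entries. The reverse implication of the lemma is immediate, so I focus only on the forward direction.

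First I would expand the matrix product. Since $J_m(\vec{a})$ is upper triangular with diagonal entries $a_1,\dots,a_m$ and $1$'s on the superdiagonal, and analogously for $J_n(\vec{b})$ with diagonal $a_{m+1},\dots,a_{m+n}$, a direct computation gives
\[
\bigl( J_m(\vec{a}) B + B J_n(\vec{b})\bigr)_{ij} = (a_i + a_{m+j})\, b_{ij} + b_{i+1,j} + b_{i,j-1},
\]
with the boundary convention $b_{m+1,j}=0$ and $b_{i,0}=0$. The hypothesis $a_j\neq -a_i$ applied to the pair of indices $(i,\,m+j)$ yields $a_i + a_{m+j}\neq 0$ for every $(i,j)$, so each of these relations can be solved for $b_{ij}$ in terms of $b_{i+1,j}$ and $b_{i,j-1}$.

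Next I would induct on the southwest-to-northeast ordering, concretely on the quantity $(m-i)+(j-1)$. At the base case $(i,j)=(m,1)$, both $b_{m+1,1}$ and $b_{m,0}$ vanish by convention, which forces $b_{m,1}=0$. For the inductive step at a general $(i,j)$, both $b_{i+1,j}$ and $b_{i,j-1}$ have strictly smaller index-sum, so they are already zero (either by the induction hypothesis or by the boundary convention), and the recurrence then forces $b_{ij}=0$. Sweeping through all pairs $(i,j)$ in this order yields $B=0$.

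I do not anticipate any substantive obstacle; this is essentially a triangular Sylvester-type uniqueness statement. The only care needed is bookkeeping: tracking that the neighbor at $(i+1,j)$ sits ``below'' in the matrix and the neighbor at $(i,j-1)$ sits ``to the left,'' so the chosen induction parameter really does decrease on both sides of the recurrence.
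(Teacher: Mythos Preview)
Your proposal is correct and follows essentially the same approach as the paper: both arguments expand the $(i,j)$-entry of $J_m(\vec{a})B+BJ_n(\vec{b})$ to obtain the recurrence $(a_i+a_{m+j})b_{ij}+b_{i+1,j}+b_{i,j-1}=0$, start at the corner $(m,1)$, and propagate the vanishing of $b_{ij}$ outward. The only cosmetic difference is that the paper sweeps in L-shaped shells (first column and last row, then second column and second-to-last row, etc.), whereas you induct along the anti-diagonals $(m-i)+(j-1)=\text{const}$; these are equivalent orderings of the same elimination.
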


\begin{proof}
Let $B=(b_{ij})$ be an arbitrary $m\times n$  matrix such that
\begin{align*}
&0=J_m(\vec{a})B+BJ_n(\vec{b})=
\\
&\begin{tiny}\begin{bmatrix}
(a_1+a_{m+1})b_{1,1}+b_{2,1}                 &  (a_1+a_{m+2})b_{1,2}+b_{2,2}+b_{1,1}                  &    ...       & (a_1+a_{m+n})b_{1,n}+b_{2,n}+b_{1,n-1}              \\
(a_2+a_{m+1})b_{2,1}+b_{3,1}                  &   (a_2+a_{m+2})b_{1,2}+b_{3,2}+b_{2,1}                 &    ...       & (a_2+a_{m+n})b_{2,n}+b_{3,n}+b_{2,n-1}             \\
\vdots                                       &  \vdots                                                &              &   \vdots            \\
(a_{m-1}+a_{m+1})b_{m-1,1}+b_{m,1}           &  (a_{m-1}+a_{m+2})b_{m-1,2}+b_{m,2}+b_{m-1,1}          &    ...       & (a_{m-1}+a_{m+n})b_{m-1,n}+b_{m,n}+b_{m-1,n-1}    \\
(a_m+a_{m+1})b_{m,1}                         & (a_m+a_{m+2})b_{m,2} +b_{m,1}                          &    ...       & (a_m+a_{m+n})b_{m,n} +b_{m,n-1}
\end{bmatrix}
\end{tiny}.
\end{align*}
Note by assumption that $a_i \neq -a_j$ for all $i,j$. Then the equation above at the $(m,1)$th entry implies that $b_{m,1}=0$. Then an inspection of the above matrix equation at the first column and the last row shows all elements in the first column and the last row of $B$ are zeros. Repeating the process for the second column and second last row starting at $(m-1,2)$th entry, we  see that all elements of the second column and the second last row of $B$ are zeros. Continuing the same process, we conclude that $B=0$.
\end{proof}

\begin{lem}\label{lem:AC2a}
Let $m,n\geq 1$. Let $a_1,\dots,a_{\max (m,n)}\in \bbC^*$ with $a_j\neq \pm a_i$ for all $i\neq j$,
$\vec{a}=(a_1,\dots, a_{m})$ and $\vec{a}'=(-a_1,\dots,-a_{n})$.
Then an $m\times n$ matrix $B=(b_{ij})$ satisfies $J_m(\vec{a})B+BJ_n(\vec{a}')=0$ if and only if
$B$ is one of the two forms \eqref{eq:B1}--\eqref{eq:B2}:
\begin{eqnarray}
  \label{eq:B1}
B=\left[\begin{array}{cccccccccc}
 b_{1,1}    & b_{1,2}          & \ldots                 &      b_{1,n-1}              &  b_{1,n}      \\
   0        & b_{2,2}          & \ldots                 &      b_{2,n-1}              &  b_{2,n}      \\
   0        & 0                & \ddots                 &      b_{3,n-1}              &  b_{3,n}      \\
\vdots      &                  &  \ddots                &      \ddots                 &               \\
0           &  0               &                        & 0                           &  b_{n,n}       \\
0           &  0               & \ldots                 &    0                        &  0            \\
\vdots      & \vdots           &                        &                             & \vdots        \\
0           &  0               & \ldots                 &                             &  0
 \end{array} \right]
 \quad \text{if }  m\geq n,
\end{eqnarray}
where $b_{i,j}=-b_{i-1,j-1}-(a_{i-1}-a_j)b_{i-1,j}$  for $2\leq i\leq j\leq n$
and $b_{1,j}$ are arbitrary for $1\leq j\leq n$;
or
\begin{eqnarray}
  \label{eq:B2}
B=\left[\begin{array}{cccccccccc}
b_{1,1} &  b_{1,2} &  \ldots  &  b_{1,m-1} & b_{1,m}    &  b_{1,m+1}    & \ldots        &  b_{1,n}      \\
  0     &  b_{2,2} &  \ldots  &  b_{2,m-1} & b_{2,m}    & b_{2,m+1}     & \ldots        &  b_{2,n}    \\
0       &  0       &  \ddots  &  b_{3,m-1} & b_{3,m}    & b_{3,m+1}     & \ldots        &   b_{3,n} \\
        &          &  \ddots  &  \ddots    & \vdots    &  \vdots        &               & \vdots\\
0       &  0       &          &     0      & b_{m,m}    & b_{m,m+1}     & \ldots        &   b_{m,n}
\end{array} \right]
\quad \text{if }  m\leq n,
\end{eqnarray}
where $b_{1,j}$ are arbitrary for $n-m+1\le j \le n$ and
\begin{eqnarray*}
  \begin{cases}
  b_{i,n-m+j}=-b_{i-1,n-m+j-1}-(a_{i-1}-a_{n-m+j})b_{i-1,n-m+j}       &  \text{if} \  2\leq i\leq j\leq m, \\
  b_{m,j}=(a_{j+1}-a_{m})b_{m,j+1}                                    &  \text{if} \  m\leq j\leq n,       \\
  b_{i,j}=-b_{i+1,j+1}-(a_{i}-a_{j+1})b_{i,j},                        &  \text{if} \   1\leq i\leq j\leq n-m-1+i.
  \end{cases}
\end{eqnarray*}
\end{lem}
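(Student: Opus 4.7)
The plan is to convert the matrix identity $J_m(\vec{a})B+BJ_n(\vec{a}')=0$ into its $mn$ scalar equations and solve the resulting linear system by propagation, in the same spirit as the proof of Lemma~\ref{lem:AC2dist}. Writing $B=(b_{i,j})$, the $(i,j)$-entry reads
\[
(a_i-a_j)b_{i,j}+b_{i+1,j}+b_{i,j-1}=0,
\]
under the boundary conventions $b_{m+1,\cdot}=b_{\cdot,0}=0$. The hypothesis $a_i\neq \pm a_j$ for $i\neq j$ ensures that $a_i-a_j\neq 0$ exactly when $i\neq j$; on the diagonal $i=j$ the equation degenerates to the consistency relation $b_{i+1,i}+b_{i,i-1}=0$.

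The first step is to derive the vanishing pattern. Starting at position $(m,1)$ the equation forces $b_{m,1}=0$; sweeping up column $1$ then gives $b_{i,1}=0$ for all $i\geq 2$. Iterating column by column for $j=2,\ldots,\min(m,n)$, a straightforward induction shows $b_{i,j}=0$ whenever $i>j$, since in each step the relevant entry $b_{i,j-1}$ has already been shown to vanish in the previous column. In the case $m\geq n$ this vanishing moreover covers all rows $i>n$ and produces exactly the shape in \eqref{eq:B1}; in the case $m\leq n$ it kills all the sub-diagonal entries consistent with \eqref{eq:B2}. The diagonal consistency conditions $b_{i+1,i}+b_{i,i-1}=0$ are then automatic, both summands having been shown to vanish.

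The remaining entries are determined by two complementary recursions. Solving the equation at position $(i-1,j)$ (with $i-1\neq j$) for $b_{i,j}$ gives the \emph{down-right} recursion $b_{i,j}=-b_{i-1,j-1}-(a_{i-1}-a_j)b_{i-1,j}$, while solving at $(i,j+1)$ (with $i\neq j+1$) for $b_{i,j}$ gives the \emph{leftward} recursion $b_{i,j}=-b_{i+1,j+1}-(a_i-a_{j+1})b_{i,j+1}$. For $m\geq n$, one takes $b_{1,1},\ldots,b_{1,n}$ as free parameters and uses the down-right recursion to fill the upper-triangular $n\times n$ block in the top $n$ rows, yielding \eqref{eq:B1}. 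For $m\leq n$, the free parameters are $b_{1,n-m+1},\ldots,b_{1,n}$; the down-right recursion first fills the upper-right triangle $\{(i,j)\mid 2\leq i\leq m,\ n-m+i\leq j\leq n\}$, and then the leftward recursion, starting in row $m$ (where $b_{m+1,\cdot}=0$ reduces it to $b_{m,j}=(a_{j+1}-a_m)b_{m,j+1}$) and sweeping upward, fills in the remaining diagonal band to yield \eqref{eq:B2}. The main obstacle is purely book-keeping: these recursions must be run in this order so that each step references only already-known entries; once that is respected, a direct substitution back into the original system confirms that every scalar equation is satisfied and no equation has been missed.
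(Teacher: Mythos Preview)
Your proof is correct and follows exactly the paper's approach: the paper's own argument simply writes out the matrix $J_m(\vec{a})B+BJ_n(\vec{a}')$ entrywise (obtaining precisely your scalar equations $(a_i-a_j)b_{i,j}+b_{i+1,j}+b_{i,j-1}=0$) and then states ``the lemma follows.'' Your write-up just makes the back-substitution explicit---the column-by-column vanishing sweep and the two recursions---so it is a fleshed-out version of the same computation rather than a different method.
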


\begin{proof}
Let $B=(B_{ij})$ be an arbitrary $m\times n$  matrix such that
\begin{align*}
&0=J_m(\vec{a})B+BJ_n(\vec{a}')=
\\
&{\tiny
\begin{bmatrix}
(a_1-a_1)b_{1,1}+b_{2,1}                 &  (a_1-a_2)b_{1,2}+b_{2,2}+b_{1,1}                  &    ...       & (a_1-a_n)b_{1,n}+b_{2,n}+b_{1,n-1}              \\
(a_2-a_1)b_{21}+b_{3,1}                  &   (a_2-a_2)b_{1,2}+b_{3,2}+b_{2,1}                 &    ...       & (a_2-a_n)b_{2,n}+b_{3,n}+b_{2,n-1}             \\
\vdots                                   &  \vdots                                            &              &   \vdots            \\
(a_{m-1}-a_1)b_{m-1,1}+b_{m,1}           &  (a_{m-1}-a_2)b_{m-1,2}+b_{m,2}+b_{m-1,1}          &    ...       & (a_{m-1}-a_n)b_{m-1,n}+b_{m,n}+b_{m-1,n-1}    \\
(a_m-a_1)b_{m,1}                         & (a_m-a_2)b_{m,2} +b_{m,1}                          &    ...       & (a_m-a_n)b_{m,n} +b_{m,n-1}
 \end{bmatrix}.
 }
\end{align*}
The lemma follows.
\end{proof}

\begin{rem}  \label{rem:AC2a}
Lemma \ref{lem:AC2a} reduces to Lemma~\ref{Jordan anti-layer} and $B$ becomes an anti-layered matrix if we allow $a_i=a \in \bbC^*$ for all $i$.
\end{rem}

For a partition $\nu =(\nu_1, \nu_2, \ldots)$, we denote by $\nu' =(\nu'_1, \nu'_2, \ldots)$ the transposed partition. We also denote by $J_\nu(a)$ the Jordan normal form of eigenvalue $a$ and of block sizes $\nu_1, \nu_2, \cdots$. The following result generalizes Lemma~\ref{lem: diagonlised matrix}.

\begin{prop}
   \label{prop:block a,-a}
 Let $A=\diag(J_{\nu}(a), J_{\mu}(-a))$, for $a\in\bbC^*$, a partition $\nu$ of $n$ and a partition $\mu$ of $m$. Denote $p=\sum_{i\ge 1} \min(\nu'_i,\mu'_i).$
Then we have $(A,B)\in \scrZ_{p,n+m-2p,0}$,  for all $B\in \scrR_A$.
\end{prop}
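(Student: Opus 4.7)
My plan is to build an explicit morphism $\bbC\to\scrZ_{n+m}$, $t\mapsto(A(t),B(t))$, with $(A(0),B(0))=(A,B)$ and $(A(t),B(t))\in\scrZ_{p,n+m-2p,0}$ for all $t$ in a punctured Zariski neighborhood of $0$; since $\scrZ_{p,n+m-2p,0}$ is closed, this forces $(A,B)\in\scrZ_{p,n+m-2p,0}$. The structure of $B$ is already determined: by Proposition~\ref{ACmatrix} (with $\sigma=-1$) the only nonzero blocks are the anti-layered matrices $B_{ij}={\rm L}^-(\nu_i,\mu_j,\vec v_{ij})$ connecting $J_{\nu_i}(a)$ to $J_{\mu_j}(-a)$.

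I fix distinct nonzero scalars $s_1,\ldots,s_K$ (with $K=\max(\nu_1,\mu_1)$) satisfying $s_k+s_{k'}\ne 0$ whenever $k\ne k'$, and replace each Jordan block $J_{\nu_i}(a)$ by $J_{\nu_i}(\vec\alpha^{(i)}(t))$ with $\vec\alpha^{(i)}(t)=(a+ts_1,\ldots,a+ts_{\nu_i})$, and each $J_{\mu_j}(-a)$ by $J_{\mu_j}(\vec\beta^{(j)}(t))$ with $\vec\beta^{(j)}(t)=(-a-ts_1,\ldots,-a-ts_{\mu_j})$; call the resulting block-diagonal matrix $A(t)$. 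Since the scalar sequences defining $\vec\alpha^{(i)}(t)$ and $-\vec\beta^{(j)}(t)$ agree on their overlap, for every block-pair $(i,j)$ the hypotheses of Lemma~\ref{lem:AC2a} are satisfied, and that lemma produces a $\min(\nu_i,\mu_j)$-dimensional space of solutions $B(t)_{ij}$ to $J_{\nu_i}(\vec\alpha^{(i)}(t))B(t)_{ij}+B(t)_{ij}J_{\mu_j}(\vec\beta^{(j)}(t))=0$. By Remark~\ref{rem:AC2a} the recurrence of Lemma~\ref{lem:AC2a} specializes at $t=0$ to the anti-layered recurrence of Lemma~\ref{Jordan anti-layer}, so I take the free parameters of $B(t)_{ij}$ to be the $t$-independent anti-layered data $\vec v_{ij}$ of $B_{ij}$, and symmetrically for the blocks in the opposite direction; assembling these yields $B(t)\in\scrR_{A(t)}$ with $B(t)|_{t=0}=B$.

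For generic $t\ne 0$, each $J_{\nu_i}(\vec\alpha^{(i)}(t))$ is upper triangular with distinct diagonal entries, so $A(t)$ is diagonalizable as $A(t)=P(t)D(t)P(t)^{-1}$ where $D(t)=\bigoplus_k D_k(t)$, and $D_k(t)$ is a diagonal block of size $\nu'_k+\mu'_k$ carrying $\nu'_k$ copies of $a+ts_k$ and $\mu'_k$ copies of $-a-ts_k$. The condition $s_k+s_{k'}\ne 0$ ($k\ne k'$) implies that any pair of diagonal entries of $D(t)$ summing to $0$ must come from the same $k$, forcing $\tilde B(t):=P(t)^{-1}B(t)P(t)$ to split as $\bigoplus_k\tilde B_k(t)$ with $\tilde B_k(t)$ anti-commuting with $D_k(t)$. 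Lemma~\ref{lem: diagonlised matrix} then gives $(D_k(t),\tilde B_k(t))\in\scrZ_{\min(\nu'_k,\mu'_k),|\nu'_k-\mu'_k|,0}$; iterating Lemma~\ref{lem:ZZZ12} assembles these into $(D(t),\tilde B(t))\in\scrZ_{p,n+m-2p,0}$, and $G$-invariance transports the conclusion back to $(A(t),B(t))$. Letting $t\to 0$ closes the argument.

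The chief anticipated obstacle is to match, simultaneously for every $(i,j)$, the free parameters provided by Lemma~\ref{lem:AC2a} for $B(t)_{ij}$ to the prescribed anti-layered data $\vec v_{ij}$, keeping the family algebraic in $t$. This is handled uniformly by Remark~\ref{rem:AC2a}: the $\min(\nu_i,\mu_j)$ free parameters from Lemma~\ref{lem:AC2a} sit in the same positions as the defining parameters of ${\rm L}^-(\nu_i,\mu_j,\vec v_{ij})$, and the recurrences coincide at $t=0$, so the $t$-constant assignment of free parameters accomplishes the match.
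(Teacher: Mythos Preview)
Your argument is correct and follows essentially the same route as the paper's proof: both perturb $A$ to a matrix $A(t)$ (the paper takes $a_i=a+it$), use Lemma~\ref{lem:AC2a} together with Remark~\ref{rem:AC2a} to lift each anti-layered block $B_{ij}$ to a block $B(t)_{ij}\in\scrR_{A(t)}$ specializing to $B_{ij}$ at $t=0$, then diagonalize $A(t)$, group the eigenvalues as $\diag(\underbrace{a_k,\ldots,a_k}_{\nu'_k},\underbrace{-a_k,\ldots,-a_k}_{\mu'_k})$, and apply Lemma~\ref{lem: diagonlised matrix} on each block followed by Lemma~\ref{lem:ZZZ12}. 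Your framing as a morphism $t\mapsto(A(t),B(t))$ landing in the closed set $\scrZ_{p,n+m-2p,0}$ for $t$ in a punctured neighborhood is a clean way to package the limiting step that the paper states more informally.
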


\begin{proof}
Let us write the partitions in parts as $\nu=(n_1, \ldots, n_r)$ and $\mu =(m_1,\ldots, m_s)$.

By Lemma~\ref{Jordan anti-layer}, any $B \in \scrR_{A}$ can be written in the same block shape as for $A$, $B=(B_{ij})$, such that $B_{ij} =0$, for $1\leq i,j\leq r$ or $r+1 \leq i,j \leq r+s$, while $B_{ij}$ and $B_{ji}$ are anti-layered matrices, for all $1\leq i\leq r< j\leq r+s$.

Set $N=\max (n_1,\dots,n_r,m_1,\dots,m_s)$.
Let $a_1,\dots,a_N\in\bbC^*$ such that $a_i\neq \pm a_j$ for any $j\neq i$.
Consider the diagonalizable matrix
\[
A'=\diag(A_1,\dots, A_r,A_{r+1},\dots,A_{r+s}),
\]
where
\[
A_i=J_{n_i}(\vec{\alpha}_i),\,\, \forall 1\leq i\leq r;
\quad A_j= J_{m_{j-r}}(\vec{\beta}_j), \,\,\forall r+1\leq j\leq r+s,
\]
and $\vec{\alpha}_i=(a_1,\dots,a_{n_i})$, $\vec{\beta}_j=(-a_1,\dots,-a_{m_j})$. Let $B'=(B_{ij})$ be of the same block shape as for $A'$.
Then $B' \in \scrR_{A'}$ if and only if
\[
A_iB'_{ij}+B'_{ij}A_j=0,\quad \forall 1\leq i,j\leq r+s.
\]
It follows by Lemma~ \ref{lem:AC2dist} that $B'_{ij}=0$  for all $1\leq i,j\leq r$ or $r+1\leq i,j\leq r+s$.
Lemma ~\ref{lem:AC2a} describes the detailed structures of $B'_{ij}$ and $B'_{ji}$ for all $1\leq i\leq r< j\leq r+s$.
Therefore, by applying Remark~ \ref{rem:AC2a} and the observation in the preceding paragraph, for any $B=(B_{ij})\in\scrR_A$, there exists $B'=(B'_{ij})\in \scrR_{A'}$ such that $B'$ approaches $B$ as $a_i\rightarrow a$ (e.g., by taking $a_i=a+it$ with $t\rightarrow 0$). This reduces the proof of the proposition to the following.

\vspace{2mm}
\noindent{\bf Claim.} We have $(A',B') \in\scrZ_{p,n+m-2p,0}$.

Let us prove the Claim. Since $A'$ is similar to the matrix
$$
\diag(a_1,\dots, a_{n_1},\dots, a_1,\dots,a_{n_r}, -a_1,\dots,-a_{m_1},\dots, -a_1,\dots,-a_{m_s} ),
$$
there exists $g\in \GL_{n+m}$ such that
\begin{eqnarray*}
g^{-1}A'g=\diag(A''_1,\dots,A''_N),
\end{eqnarray*}
where $A''_i= \diag(\underbrace{a_i,\dots,a_i}_{\nu'_i},\underbrace{-a_i,\dots,-a_i}_{\mu'_i} )$ for each $1\leq i\leq N$.
Applying Lemma \ref{ACmatrix}, we see that $g^{-1}B'g \in \scrR_{g^{-1}A'g}$ must be of the form
\[
g^{-1}B'g =\diag(B''_{1},\dots, B''_{N} )
\]
where $B''_{i}$ is a matrix of size $(\nu'_i+\mu'_i)\times (\nu'_i+\mu'_i)$ for $1\leq i\leq N$. We have $(A''_i,B''_i)\in \scrZ_{\min(\nu'_i, \mu'_i),|\nu'_i- \mu'_i|,0}$ by Lemma~ \ref{lem: diagonlised matrix}. From Lemma~ \ref{lem:ZZZ12} we conclude that $(g^{-1}A'g,g^{-1}B'g)$ is in $\scrZ_{p,n+m-2p,0}$, and then so is $(A',B')$.
The proposition is proved.
\end{proof}

\subsection{$\scrR_A$ for $A$ nilpotent with Jordan blocks of same size}


Recall the floor (resp., ceil) function $\lfloor{x}\rfloor$ (resp., $\lceil{x}\rceil$) which denotes
the largest (resp., least) integer $\leq x$ (resp., $\geq x$) for $x\in \mathbb R$. Recall $\mathbb{J}_{s,n}(0)$ from \S\ref{subsec:block}.

\begin{prop}\label{prop:RJsn}
For any $B\in\scrR_{\mathbb{J}_{s,n}(0)}$, we have $(\mathbb{J}_{s,n}(0),B)\in  \scrZ_{n\lfloor \frac{s}{2}\rfloor,0,n(\lceil\frac{s}{2}\rceil-\lfloor \frac{s}{2}\rfloor)}$. Moreover, there exists a dense open subset $\calU_{\mathbb{J}_{s,n}(0)}$ of  $\scrR_{\mathbb{J}_{s,n}(0)}$ which consists of matrices similar to
$\diag \big(J_{\lceil \frac{s}{2}\rceil}(b_1),J_{\lfloor \frac{s}{2}\rfloor}(-b_1), \ldots, J_{\lceil \frac{s}{2}\rceil}(b_{n}),J_{\lfloor \frac{s}{2}\rfloor}(-b_{n}) \big),$
for distinct $b_1, \ldots, b_n \in \bbC^*$.
\end{prop}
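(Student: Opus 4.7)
My plan is to describe $\scrR_{\mathbb{J}_{s,n}(0)}$ explicitly using Proposition~\ref{ALBmatrix}, then locate a dense open subset $\calU_{\mathbb{J}_{s,n}(0)}$ on which $B$ has the stated Jordan form, and finally deduce membership in $\scrZ_{p,0,r}$ via Proposition~\ref{prop:block a,-a} together with Lemmas~\ref{lem:ZZZ12} and~\ref{lem:switch}, extending to all of $\scrR_{\mathbb{J}_{s,n}(0)}$ by closure.

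By Proposition~\ref{ALBmatrix} with $\sigma=-1$ applied to $A=\mathbb{J}_{s,n}(0)$ (one block, eigenvalue $0$), every $B\in\scrR_{\mathbb{J}_{s,n}(0)}$ takes the form $B={\rm LB}^{-}(s,s,(B_1,\ldots,B_s))$ with arbitrary $B_k\in M_n(\bbC)$; in particular, its diagonal $n\times n$ blocks alternate as $B_1,-B_1,B_1,\ldots$. I will define $\calU_{\mathbb{J}_{s,n}(0)}$ as the locus where $B_1$ has $n$ distinct nonzero eigenvalues $b_1,\ldots,b_n$ with $b_i\neq -b_j$ for $i\neq j$, together with additional nonempty open conditions on $B_2,\ldots,B_s$ specified below; this is nonempty open in $\scrR_{\mathbb{J}_{s,n}(0)}$. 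Since $\diag(Q,Q,\ldots,Q)$ centralizes $\mathbb{J}_{s,n}(0)$, after conjugation I may assume $B_1=\diag(b_1,\ldots,b_n)$.

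To pin down the Jordan type of such a $B$, I will solve the eigenvector equation $Bv=\lambda v$ block-wise with $v=(v_1,\ldots,v_s)^T$, $v_k\in\bbC^n$: the $k$-th block yields the recursion $(B_1-(-1)^{k-1}\lambda I)v_k=-B_2v_{k+1}-\cdots-B_{s-k+1}v_s$, to be solved backwards from $k=s$. Under the genericity on $B_1$, the operator on the left is invertible except precisely when $(-1)^{k-1}\lambda\in\{b_i\}$, in which case its kernel is $\bbC e_i$ and solvability imposes a linear constraint on the previously introduced free scalars. Imposing that each successive solvability coefficient is nonzero (a polynomial open condition in the entries of $B_2,\ldots,B_s$) forces all such free scalars to vanish except one, so $\dim\ker(B-\lambda I)=1$ for each $\lambda\in\{\pm b_i\}$. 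Together with the multiplicities $\lceil s/2\rceil$ for $b_i$ and $\lfloor s/2\rfloor$ for $-b_i$, this establishes the claimed Jordan form of $B$, proving the ``moreover'' statement and that $\calU_{\mathbb{J}_{s,n}(0)}$ is dense.

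For the first assertion, since $\scrZ_{p,0,r}$ is closed and $\calU_{\mathbb{J}_{s,n}(0)}$ is dense in $\scrR_{\mathbb{J}_{s,n}(0)}$, it suffices to treat $B\in\calU_{\mathbb{J}_{s,n}(0)}$. Conjugating $(A,B)$ by a suitable $g\in G$ yields $(\tilde A,\tilde B)$ with $\tilde B=\diag(J_{\lceil s/2\rceil}(b_1),J_{\lfloor s/2\rfloor}(-b_1),\ldots,J_{\lceil s/2\rceil}(b_n),J_{\lfloor s/2\rfloor}(-b_n))$. Applying Proposition~\ref{ACmatrix} with $\tilde B$ playing the role of ``$A$'', the eigenvalue disjointness $b_i\neq\pm b_j$ ($i\neq j$) forces a block decomposition $\tilde A=\diag(\tilde A^{(1)},\ldots,\tilde A^{(n)})$ aligned with $\tilde B=\diag(\tilde B^{(1)},\ldots,\tilde B^{(n)})$, where $\tilde B^{(i)}=\diag(J_{\lceil s/2\rceil}(b_i),J_{\lfloor s/2\rfloor}(-b_i))$. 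Proposition~\ref{prop:block a,-a} applied to each $(\tilde B^{(i)},\tilde A^{(i)})$, with single-part partitions $\nu=(\lceil s/2\rceil)$ and $\mu=(\lfloor s/2\rfloor)$ giving $p^{(i)}=\lfloor s/2\rfloor$, places it in $\scrZ_{\lfloor s/2\rfloor,\lceil s/2\rceil-\lfloor s/2\rfloor,0}$; Lemma~\ref{lem:ZZZ12} then gives $(\tilde B,\tilde A)\in\scrZ_{n\lfloor s/2\rfloor,n(\lceil s/2\rceil-\lfloor s/2\rfloor),0}$, and Lemma~\ref{lem:switch} delivers $(\tilde A,\tilde B)\in\scrZ_{n\lfloor s/2\rfloor,0,n(\lceil s/2\rceil-\lfloor s/2\rfloor)}$. $G$-invariance of $\scrZ_{p,m,r}$ then completes the argument for $(A,B)$.

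The hardest step will be the Jordan form analysis in the middle paragraph: verifying that the recursive solvability conditions can be arranged so exactly one free scalar in $\ker(B-\lambda I)$ survives, for all $2n$ eigenvalues $\pm b_i$ and for both parities of $s$. This reduces to showing that certain explicit rational expressions in the entries of $B_2,\ldots,B_s$ do not vanish identically, which is then a nonempty open genericity condition carving out the desired open subset of $\scrR_{\mathbb{J}_{s,n}(0)}$.
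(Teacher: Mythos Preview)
Your proposal is correct, and the reduction of the first assertion to the ``moreover'' part via Proposition~\ref{prop:block a,-a}, Lemma~\ref{lem:ZZZ12}, Lemma~\ref{lem:switch}, and closure is exactly what the paper does. The genuine difference lies in how you establish the Jordan form of a generic $B$. The paper argues by dimension: it introduces the incidence variety $\calV_{s,n}\subset\bbC\times M_n(\bbC)^s$ cut out by the vanishing of all $(sn-1)\times(sn-1)$ minors of $\lambda I_{sn}-B$, bounds its dimension by $sn^2-1$ via an explicit regular sequence $(F,G)$ (with $F=\det(\lambda I-B_1)\det(\lambda I+B_1)$ and $G$ a well-chosen minor), and concludes that the projection to $\scrR_{\mathbb{J}_{s,n}(0)}$ misses a dense open set on which $\rank(\lambda I-B)\ge sn-1$ for every $\lambda$. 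Intersecting with the locus where $B_1$ has distinct eigenvalues then forces a single Jordan block per eigenvalue.

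Your route is more hands-on: you solve $(B-\lambda I)v=0$ by the backward block recursion $(B_1-(-1)^{k-1}\lambda I)v_k=-\sum_{j>k}B_{j-k+1}v_j$, track the free scalars introduced at singular steps, and observe that the solvability constraints form a lower-triangular linear system whose diagonal entries (your ``successive solvability coefficients'') are all equal to the single quantity $\alpha_i=e_i^T\big(B_2(B_1+b_iI)^{-1}B_2-B_3\big)e_i$ (and its analogue for $-b_i$). The non-identical-vanishing you flag as the hardest step is immediate: specialize $B_2=0$ to get $\alpha_i=-(B_3)_{ii}$. Your approach buys an explicit description of $\calU_{\mathbb{J}_{s,n}(0)}$ as a finite intersection of concrete hypersurface complements in the $B_k$'s, whereas the paper's dimension argument is shorter and avoids case analysis but leaves the open set implicit (indeed the paper remarks that making it explicit would require knowing $\text{Im}(\psi)$ is closed). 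Both are valid; yours is more constructive, the paper's more uniform.
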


\begin{proof}
The case for $s=1$ is clear as $\mathbb{J}_{1,n}(0)$ is the $n\times n$ zero matrix.

We shall now assume $s\ge 2$. Set $B=\ALB (s,s,B_1,\ldots, B_s)$, cf. \eqref{eq:block al1} with $\sigma=-$ for notation.
We introduce the following variety
\begin{align*}
\calV_{s,n} &=\big\{ (\la, B_1,\ldots, B_s) \in \bbC \times M_n(\bbC)^s \mid
\\
&\qquad\qquad
\text{ all } (sn-1)\times (sn-1) \text{ minors of } (\la I_{sn} - B ) \text{ are } 0 \big\}.
\end{align*}
Let us denote $B_i=(b_{i;k,\ell})_{k,\ell}$, for each $i$. Then $F:=\det (\la I_n -B_1) \det (\la I_n +B_1)$ is a polynomial in the $b_{1;k,\ell}$'s. Let $C_{sn,1}$ be the matrix obtained from $\la I_{sn} - B$ with the first column and the last row deleted, and denote $G =\det C_{sn,1}$ (which is an $(sn-1)\times (sn-1)$-minor of $\la I_{sn} - B$).
Observe that the product of the diagonal entries of $C_{sn,1}$ gives us (up to a sign) a monomial $b_{2;n,1}^{s-1}\prod_{i=1}^{n-1} b_{1;i,i+1}^s,$ which is of highest degree in $b_{2;n,1}$ and is not equal to any other monomials from the expansion of $G=\det C_{sn,1}$. It follows that $(F,G)$ is a regular sequence.

Then
\[
\calV_{s,n} \subset \calV_{s,n}' := \big\{ (\la, B_1,\ldots, B_s) \in \bbC \times M_n(\bbC)^s \mid
F=G=0 \big\}.
\]
It follows that $\dim \calV_{s,n} \le \dim \calV_{s,n}'  =(1+sn^2)-2=sn^2-1$.

Consider the composition of a projection with an isomorphism
\[
\psi: \calV_{s,n} \longrightarrow M_n(\bbC)^s \stackrel{\cong}{\longrightarrow} \scrR_{\bbJ_{s,n}(0)},
\quad  (\la, B_1,\ldots, B_s) \mapsto \ALB (s,s,B_1,\ldots, B_s).
\]
Denote by $W^c$ the complement of a subvariety $W$ in $\scrR_{\bbJ_{s,n}(0)}$.
By definition, we have
\begin{equation*}
\text{Im}(\psi)^c =\big\{B= \ALB (s,s,B_1,\ldots, B_s) ) \mid \rank (\la I_{sn} -B) \ge sn-1, \forall \la \in \bbC \big\}.
\end{equation*}
Clearly $U_1:=\big\{B= \ALB (s,s,B_1,\ldots, B_s) ) \mid B_1 \text{ has distinct eigenvalues } b_1, \ldots, b_n \big\}$ is a dense open subset of $\scrR_{\bbJ_{s,n}(0)}$.
Since $\dim \ov{\text{Im}(\psi)} = \dim \text{Im}(\psi) \le \dim \calV_{s,n} \le sn^2-1 <\dim \scrR_{\bbJ_{s,n}(0)}$, we see
\[
\calU_{\mathbb{J}_{s,n}(0)} := \ov{\text{Im}(\psi)}^c \cap U_1
\]
is dense open in $\scrR_{\bbJ_{s,n}(0)}$. By construction we have
\begin{align}
\label{eq:Ubig}
\begin{split}
\calU_{\mathbb{J}_{s,n}(0)} &\subseteq \big\{B= \ALB (s,s,B_1,\ldots, B_s) ) \mid \rank (\la I_{sn} -B) \ge sn-1, \forall \la \in \bbC,
\\
&\qquad\qquad\qquad\qquad\qquad\qquad\qquad  B_1 \text{ has distinct eigenvalues } b_1, \ldots, b_n \big\}.
\end{split}
\end{align}

It follows by \eqref{eq:Ubig} that for any $B\in \calU_{\mathbb{J}_{s,n}(0)}$ there is a unique Jordan block with eigenvalue $b_i$ (respectively, $-b_i$) in the Jordan normal form  of $B$, which is $J_{\lceil\frac{s}{2}\rceil}(b_i)$ (respectively, $J_{\lfloor\frac{s}{2}\rfloor}(b_i)$), for all $i$. This proves the second statement in Proposition~\ref{prop:RJsn}. The first statement follows from Lemma~\ref{lem:switch}, the second statement, Proposition~\ref{prop:block a,-a}, and that $\ov{\calU_{\mathbb{J}_{s,n}(0)}} =\scrR_{\bbJ_{s,n}(0)}$.
\end{proof}

\begin{rem}
If one could show that $\text{Im}(\psi)$ is closed, then $\calU_{\mathbb{J}_{s,n}(0)}$ would be equal to the RHS of \eqref{eq:Ubig}; this would make the set $\calU_{\mathbb{J}_{s,n}(0)}$ in Proposition~\ref{prop:RJsn} above explicit.
\end{rem}

\subsection{$\scrR_A$ for $A$ nilpotent}

Let $A=\diag(\mathbb{J}_{1,n_1}(0), \mathbb{J}_{2,n_2}(0),\dots,\mathbb{J}_{s,n_s}(0))$, and $n=\sum_{i=1}^s in_i$. Then each $B\in \scrR_{A}$ is of block matrix form $B=(B_{ij})_{s\times s}$ such that
$\mathbb{J}_{i,n_i}(0) B_{ij}=-B_{ij} \mathbb{J}_{j,n_j}(0)$, for $1\leq i,j\leq s$. By Lemma~\ref{lem:ALB}, we have
\begin{equation}
  \label{eq:Bijk}
B_{ij}=\ALB \big(i,j, B^{(1)}_{ij}, B^{(2)}_{ij},\ldots,B^{(\min(i,j))}_{ij} \big)
\end{equation}
for $n_i\times n_j$ matrices ${B^{(1)}_{ij}}, {B^{(2)}_{ij}},\ldots, B^{(\min(i,j))}_{ij}$, and $1\leq i, j\leq s$.
We shall also view $B$ as a $\frac12 s(s+1) \times \frac12 s(s+1)$ block matrix with diagonal blocks being $\pm B_{ii}^{(1)}$ (of multiplicities $\lceil \frac{i}{2}\rceil$ and ${\lfloor\frac{i}{2}\rfloor}$, respectively) for $1\le i \le s$.

\begin{lem}\label{lem:detblock}
Let $A=\diag(\mathbb{J}_{1,n_1}(0), \mathbb{J}_{2,n_2}(0),\dots,\mathbb{J}_{s,n_s}(0))$, and retain the notation above for $B\in \scrR_{A}$; see \eqref{eq:Bijk}. Then
\begin{itemize}
\item[(1)] $B$ is similar to a $\frac12 s(s+1) \times \frac12 s(s+1)$ block upper-diagonal matrix with diagonals being a rearrangement of the diagonal blocks of $B$; see \eqref{eq:PBP} and \eqref{eq:new diag} below;
\item[(2)] $\chi^{B}(X)=\prod_{i=1}^s\chi^{B_{ii}}(X)$;
\item[(3)] $\chi^B(X)=\prod_{i=1}^s ({\chi^{B^{(1)}_{ii}}}(X))^{\lceil \frac{i}{2}\rceil}({\chi^{{-B^{(1)}_{ii}}}}(X))^{\lfloor\frac{i}{2}\rfloor}$.
\end{itemize}
\end{lem}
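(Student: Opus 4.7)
The plan is to exhibit a single permutation matrix $P$ (blocked by the sizes $n_i$) whose conjugation turns $B$ into an explicit block upper-triangular matrix; parts (2) and (3) will then drop out immediately by reading off the diagonal. First, I would label the $\tfrac12 s(s+1)$ small-block rows (and columns) of $B$ by pairs $(i,k)$ with $1 \le k \le i \le s$, where $(i,k)$ corresponds to the $k$-th small row inside the $i$-th big block-row. Using \eqref{eq:Bijk} together with the explicit shapes \eqref{eq:block al1} and \eqref{eq:block al2}, the $(k,\ell)$-th small block of $B_{ij}$ reads: when $i \ge j$ it equals $(-1)^{k-1} B_{ij}^{(\ell - k + 1)}$ for $1 \le k \le \ell \le j$ and zero otherwise; when $i \le j$ it equals $(-1)^{k-1} B_{ij}^{(\ell - (j-i) - k + 1)}$ for $j-i+1 \le \ell$ with $k \le \ell - (j-i)$, and zero otherwise.

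The decisive step is to introduce the ordering $(i,k) \prec (j,\ell)$ iff $k < \ell$, or $k = \ell$ and $i > j$, and to let $P$ be the corresponding block permutation matrix. I would then verify directly from the formulas above that $P^{-1} B P$ is upper-triangular at the small-block level. Concretely: (i) if $k > \ell$, both displays \eqref{eq:block al1} and \eqref{eq:block al2} force the $(k,\ell)$ small block of $B_{ij}$ to be zero, so $P^{-1} B P$ is block upper-triangular at the ``level'' granularity indexed by $k = 1, \ldots, s$; (ii) when $k = \ell$ the block is nonzero precisely when $i \ge j \ge k$, in which case it equals $(-1)^{k-1} B_{ij}^{(1)}$, so within the level-$k$ subblock (whose rows and columns are $(s,k), (s-1,k), \ldots, (k,k)$ in this order) the matrix is again upper-triangular with diagonal entries $(-1)^{k-1} B_{ss}^{(1)}, (-1)^{k-1} B_{s-1,s-1}^{(1)}, \ldots, (-1)^{k-1} B_{kk}^{(1)}$. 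Combining (i) and (ii), $P^{-1} B P$ is upper-triangular at the small-block level with diagonal entries $(-1)^{k-1} B_{ii}^{(1)}$ for $1 \le k \le i \le s$, which is a rearrangement of the small diagonal blocks of $B$; this proves (1) and supplies the data required by \eqref{eq:PBP} and \eqref{eq:new diag}.

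With (1) in hand, (2) is immediate: conjugation preserves the characteristic polynomial, and for an upper-triangular matrix it multiplies over the diagonal entries, so
\[
\chi^B(X) \;=\; \prod_{i=1}^s \prod_{k=1}^i \chi^{(-1)^{k-1} B_{ii}^{(1)}}(X) \;=\; \prod_{i=1}^s \chi^{B_{ii}}(X),
\]
the last equality because $B_{ii}$ is itself block upper-triangular with small diagonals $(-1)^{k-1} B_{ii}^{(1)}$ for $k = 1, \ldots, i$ (this is just \eqref{eq:block al1} with $s = t = i$). For (3), I separate odd and even $k$ in the inner product: there are $\lceil i/2 \rceil$ odd values and $\lfloor i/2 \rfloor$ even values of $k$ in $\{1, \ldots, i\}$, contributing the factors $\chi^{B_{ii}^{(1)}}(X)^{\lceil i/2 \rceil}$ and $\chi^{-B_{ii}^{(1)}}(X)^{\lfloor i/2 \rfloor}$ respectively, which is the claim.

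The main technical chore, rather than any deep obstacle, will be the case analysis in steps (i)--(ii) above: the two displays \eqref{eq:block al1} and \eqref{eq:block al2} treat $i \ge j$ and $i \le j$ asymmetrically, and one must carefully track the anti-layered column shift $j - i$ in the second case to confirm that $k > \ell$, and that $k = \ell$ with $i < j$, both give zero entries. Once this bookkeeping is complete, the rest of the argument is purely structural.
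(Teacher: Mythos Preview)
Your argument is correct. The key observation --- that under the total order $(i,k)\prec(j,\ell)$ iff $k<\ell$, or $k=\ell$ and $i>j$, every small block of $B$ lying strictly below the diagonal vanishes --- follows exactly as you outline from the shapes \eqref{eq:block al1}--\eqref{eq:block al2}, and the deductions of (2) and (3) are then routine.

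Your route differs from the paper's. The paper proceeds iteratively: it isolates a recurring $4$-block pattern \eqref{eq:C} inside $B$, conjugates by a single block transposition to push one diagonal block to the lower-right corner, and repeats, peeling off diagonal blocks one at a time (the $s=3$ case is written out in full, and the general case is described verbally). You instead write down a single permutation $P$ in closed form via an explicit total order and verify upper-triangularity in one pass. Your approach is cleaner and more conceptual --- the order ``first by level $k$, then by decreasing $i$'' makes the structure transparent --- while the paper's approach is more hands-on and perhaps easier to follow for a reader checking a small case. Note that the two approaches produce genuinely different permutation matrices $P$: your diagonal reads (for $s=3$) $B^{(1)}_{33},\,B^{(1)}_{22},\,B^{(1)}_{11},\,-B^{(1)}_{33},\,-B^{(1)}_{22},\,B^{(1)}_{33}$, whereas the paper's \eqref{eq:new diag} gives $B^{(1)}_{33},\,B^{(1)}_{22},\,-B^{(1)}_{33},\,B^{(1)}_{11},\,-B^{(1)}_{22},\,B^{(1)}_{33}$. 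Both are rearrangements of the same multiset, so either establishes (1); just be aware that your $P$ does not literally reproduce \eqref{eq:PBP} and \eqref{eq:new diag} but rather supplies analogous data.
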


\begin{proof}
Parts (2)-(3) follow by (1) immediately. It remains to prove (1). Let us write down the detail for $s=3$ to illustrate the general idea.

We start with a simple linear algebra fact, which will be applied repeatedly. Consider a distinguished block matrix (in 4 blocks)
of the form
\begin{align}
  \label{eq:C}
C=
\begin{bmatrix}
C_{11} & C_{12}  & C_{13}    & C_{14} \\
0          & C_{22}  &  0            & C_{24}\\
C_{31}  &C_{32}   & C_{33}   & C_{34} \\
0           &0            & 0            & C_{44}
\end{bmatrix}.
\end{align}
Assume the block size of $C$ is $r|s|t|u$. The by conjugation a block permutation matrix $P_1$ transforms $C$ to a block upper-triangular matrix as follows:

\begin{align}
    \label{eq:PCP}
P_1=
\begin{bmatrix}
I_{rr} & 0         & 0     & 0      \\
0      & 0         & I_{ss} & 0     \\
0      &I_{tt}  & 0 & 0     \\
0      &  0     & 0 & I_{u}     \\
\end{bmatrix},
\qquad
P_1^{-1}CP_1=
\begin{bmatrix}
C_{11}  & C_{13}   & C_{12}    & C_{14} \\
C_{31}  &C_{33}    & C_{32}    & C_{34} \\
0           &         0    & C_{22}    & C_{24} \\
0           &         0    & 0             & C_{44}
\end{bmatrix}.
\end{align}

Now returning to our setting for $s=3$, we have
\begin{align*}
B=
\begin{bmatrix}
B^{(1)}_{11}   & 0                 &B^{(1)}_{12}      & 0                     &   0                    &B^{(2)}_{13} \\
B^{(1)}_{21}   &B^{(1)}_{22}  &  B^{(2)}_{22}   & 0                     &B^{(1)}_{23}     &B^{(2)}_{23} \\
0                    & 0                 &-B^{(1)}_{22}      & 0                     &0                      &-B^{(1)}_{23}  \\
B^{(1)}_{31}   &B^{(1)}_{32} &B^{(2)}_{32}      &B^{(1)}_{33}     &B^{(2)}_{33}    &B^{(3)}_{33} \\
0                   &0                   &-B^{(1)}_{32}      & 0                     &-B^{(1)}_{33}   &-B^{(2)}_{33} \\
0                   &0               &  0                          & 0                      &0                      &B^{(1)}_{33}    \\
\end{bmatrix},
\quad
P=
\begin{bmatrix}
0   & 0                 &0      & I                     &   0                    &0 \\
0   &I  &  0   & 0                     &0    &0 \\
0                    & 0                 &0      & 0                     &I                      &0 \\
I   &0 &0     &0     &0  & 0 \\
0                   &0                   &I      & 0                     &0  &0 \\
0                   &0               &  0                          & 0                      &0                      &I   \\
\end{bmatrix}.
\end{align*}
We view the matrix $B$ as a 4-block matrix $C$ in \eqref{eq:C}, by regarding the third diagonal entry $-B^{(1)}_{22}$ of $B$ as $C_{22}$ and the sixth diagonal entry $B^{(1)}_{33}$ of $B$ as the $C_{44}$.
Deleting the third and sixth rows and columns of $B$ we obtain a new matrix denoted by $B'_1$.
The above general discussion leading to \eqref{eq:PCP} shows that $B$ is similar to
$
B'= \begin{bmatrix}
B'_1 &  *            &   * \\
    & -B^{(1)}_{22} & \star \\
    &               & B^{(1)}_{33}
\end{bmatrix}.$

Note the first row of $B'_1$ is $0$ except the block $B^{(1)}_{11}$. We view $B'$ as a degenerate case of \eqref{eq:C} with $C_{11}$ being a $0\times 0$ matrix, and identify $B^{(1)}_{11}$ with $C_{22}$, and $C_{44}$ with $\begin{bmatrix}
    -B^{(1)}_{22} & \star \\
    & B^{(1)}_{33}
\end{bmatrix}.$ Applying the general discussion leading to \eqref{eq:PCP} shows that $B$ is similar to
\[
B''= \begin{bmatrix}
B''_1 & * &* & *  \\
 & B^{(1)}_{11} &  *            &   * \\
&    & -B^{(1)}_{22} & \star \\
 &   &               & B^{(1)}_{33}
\end{bmatrix},
\]
where $B''_1$ is obtained from $B'_1$ with the first row/column deleted.
Repeating the above process,
we conclude that $B$ is similar to
\begin{align}
  \label{eq:PBP}
P^{-1} B P =
\begin{bmatrix}
B^{(1)}_{33} & *            &      *       & *             &     *         & *           \\
             & B^{(1)}_{22} &      *       & *             &     *         & *           \\
             &              &-B^{(1)}_{33} & *             &  *            &      *        \\
             &              &              &B^{(1)}_{11}   &   *           &     *         \\
             &              &              &               & -B^{(1)}_{22} &    *         \\
             &              &              &               &               & B^{(1)}_{33}  \\
\end{bmatrix}.
\end{align}

The argument for general $s$ is entirely similar but involves messy notations. we skip the details, except noting that the diagonal entries in the resulting block upper-triangular matrix similar to $B$ are
\begin{equation}
  \label{eq:new diag}
\diag \big(B^{(1)}_{ss}; B^{(1)}_{s-1,s-1}, -B^{(1)}_{ss}; B^{(1)}_{s-2,s-2}, -B^{(1)}_{s-1,s-1}, B^{(1)}_{ss}; \ldots, \ldots; B^{(1)}_{11}, -B^{(1)}_{22}, \ldots, (-1)^{s-1} B^{(1)}_{ss} \big).
\end{equation}
The lemma is proved.
\end{proof}

We have the following generalization of Proposition~\ref{prop:RJsn}. Recall from Lemma~\ref{lem:sJordan} that an arbitrary nilpotent Jordan normal form $\diag(J_1^{n_1}(0), J_2^{n_2}(0),\ldots ,J_s^{n_s}(0))$ is similar to the matrix $\diag(\mathbb{J}_{1,n_1}(0),\ldots, \mathbb{J}_{s,n_s}(0))$.

\begin{prop}\label{prop:Jordan block 0}
Let $A=\diag(\mathbb{J}_{1,n_1}(0),\ldots, \mathbb{J}_{s,n_s}(0))$, $n=\sum^s_{i=1} in_i$, and $p=\sum^s_{i=1}n_i\lfloor{\frac{i}{2}}\rfloor$. Then for any $B\in \scrR_A$, we have $(A,B)\in\scrZ_{p,0,n-2p}$.
\end{prop}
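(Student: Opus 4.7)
The plan is to imitate the single-block case, Proposition~\ref{prop:RJsn}: identify a dense open subset $\calU_A \subseteq \scrR_A$ of generic $B$'s for which $(A,B) \in \scrZ_{p,0,r}$ can be exhibited directly, then extend to all $B \in \scrR_A$ using closedness of $\scrZ_{p,0,r}$ and density of $\calU_A$ in the irreducible vector space $\scrR_A$ (with $r = n - 2p$).

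First, define $\calU_A$ to consist of those $B \in \scrR_A$ that are $G$-conjugate to
\[
\tilde B = \diag_{i=1}^{s}\diag_{j=1}^{n_i}\tilde B_{ij},\qquad \tilde B_{ij} := J_{\lceil i/2\rceil}(b_{ij}) \oplus J_{\lfloor i/2\rfloor}(-b_{ij}),
\]
for some $b_{ij} \in \bbC^*$ that are sign-generic ($b_{ij} \ne \pm b_{k\ell}$ whenever $(i,j) \ne (k,\ell)$); when $i=1$ the second Jordan summand is empty. Lemma~\ref{lem:detblock}(3) expresses the characteristic polynomial of any $B \in \scrR_A$ via its diagonal blocks $B_{ii}^{(1)}$, so the conditions that each $B_{ii}^{(1)}$ has distinct nonzero eigenvalues and that their union is sign-generic across $i$ are both open. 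Combined with the standard upper-semi-continuity of Jordan type (the single-block partition at each eigenvalue is the smallest in dominance order, hence the generic one), this makes $\calU_A$ open. Nonemptiness follows by taking $B = \diag(B_{11},\ldots,B_{ss})$ with each $B_{ii} \in \calU_{\mathbb{J}_{i,n_i}(0)}$ from Proposition~\ref{prop:RJsn}, with spectra chosen sign-generic across $i$; hence $\calU_A$ is dense in $\scrR_A$.

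Next, fix $B \in \calU_A$, pick $g$ with $gBg^{-1} = \tilde B$, and set $\tilde A = gAg^{-1}$. Because $\{\pm b_{ij}\}$ is sign-generic, Proposition~\ref{ACmatrix} applied to $\tilde B \tilde A = -\tilde A \tilde B$ forces $\tilde A$ to be block-diagonal with respect to the pair-block decomposition of $\tilde B$: the only Jordan-block pairs $(\alpha,\beta)$ with $\alpha = -\beta$ occur inside a single $\tilde B_{ij}$. Thus $\tilde A = \bigoplus_{i,j}\tilde A_{ij}$ with $\tilde A_{ij}$ anti-commuting with $\tilde B_{ij}$. Proposition~\ref{prop:block a,-a} applied with $\nu = (\lceil i/2\rceil)$ and $\mu = (\lfloor i/2\rfloor)$ gives $p_{ij} = \lfloor i/2\rfloor$ and $(\tilde B_{ij},\tilde A_{ij}) \in \scrZ_{\lfloor i/2\rfloor,\,\lceil i/2\rceil-\lfloor i/2\rfloor,\,0}$, valid uniformly including the case $i=1$ where $\mu$ is empty and $\tilde A_{1j} = 0$ is forced. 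Lemma~\ref{lem:switch} then gives $(\tilde A_{ij},\tilde B_{ij}) \in \scrZ_{\lfloor i/2\rfloor,\,0,\,\lceil i/2\rceil-\lfloor i/2\rfloor}$, and iterating Lemma~\ref{lem:ZZZ12} assembles these into $(\tilde A,\tilde B) \in \scrZ_{p,0,r}$ via $\sum_{i,j}\lfloor i/2\rfloor = p$ and $\sum_{i,j}(\lceil i/2\rceil-\lfloor i/2\rfloor) = \sum_{i\text{ odd}} n_i = r$. By $G$-invariance, $(A,B) \in \scrZ_{p,0,r}$.

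Finally, the inclusion $\{A\} \times \calU_A \subseteq \scrZ_{p,0,r}$ combined with closedness of $\scrZ_{p,0,r}$ and density of $\calU_A$ in the irreducible $\scrR_A$ upgrades the conclusion to all $B \in \scrR_A$. The hard part is the density of $\calU_A$: the Jordan-type condition is not purely spectral, so one must combine Lemma~\ref{lem:detblock}(3) (controlling the characteristic polynomial) with standard semi-continuity of Jordan type, using the explicit block-diagonal construction above to guarantee nonemptiness.
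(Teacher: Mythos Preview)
Your proof is correct and follows essentially the same route as the paper: identify a dense open subset of $\scrR_A$ consisting of $B$'s with the specific Jordan normal form $\bigoplus_{i,j}\big(J_{\lceil i/2\rceil}(b_{ij})\oplus J_{\lfloor i/2\rfloor}(-b_{ij})\big)$; for such $B$ conjugate, apply Proposition~\ref{ACmatrix} to block-diagonalize $\tilde A$, invoke Proposition~\ref{prop:block a,-a}, Lemma~\ref{lem:switch}, and Lemma~\ref{lem:ZZZ12} to place each summand in $\scrZ_{\lfloor i/2\rfloor,0,\lceil i/2\rceil-\lfloor i/2\rfloor}$; and finally pass to the closure. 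The paper packages the dense open set slightly differently (first restricting the diagonal blocks $B_{ii}$ to $\calU_{\mathbb{J}_{i,n_i}(0)}$, then shrinking via the dimension-counting argument of Proposition~\ref{prop:RJsn}), whereas you go straight to the Jordan form, but the substance is the same.

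One correction: the single-block partition $(m)$ at an eigenvalue of multiplicity $m$ is the \emph{largest} in dominance order, not the smallest; it is precisely this that makes it correspond to the open orbit and hence the generic Jordan type. Concretely, your openness claim is the non-derogatory condition (e.g.\ $I,B,\ldots,B^{n-1}$ linearly independent in $M_n(\bbC)$), which is indeed open; combined with Lemma~\ref{lem:detblock}(3) for the characteristic polynomial and your explicit block-diagonal witness for nonemptiness, the density of $\calU_A$ is solid.
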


\begin{proof}
Any $B\in \scrR_{A}$ is a block matrix $B=(B_{ij})$ such that
$\mathbb{J}_{i,n_i}(0)B_{ij}+B_{ij}\mathbb{J}_{j,n_j}(0)=0$ for any $1\leq i,j\leq s$. It follows by Lemma~\ref{lem:ALB} that each $B_{ij}$ is an anti-layered block matrix, i.e., of the form \eqref{eq:block al1} or \eqref{eq:block al2}. Introduce the following dense open subset $\calU$ of the vector space $\scrR_{A}$ (and so $\scrR_A =\ov{\calU}$):
\begin{align*}
\calU = &\left\{B=(B_{ij})\in \scrR_{A} \mid B_{ii}\in U_{\mathbb{J}_{i,n_i}(0)}, \right.\\
&\left. \qquad B_{ii}^2 \text{ and }B_{jj}^2 \text{ do not have a common eigenvalues}, \forall 1\leq i\neq j\leq s \right\},
\end{align*}
where $\calU_{\mathbb{J}_{i,n_i}(0)}$ is a dense open subset of $\scrR_{\mathbb{J}_{i,n_i}(0)}$ for each $i$ as given in Proposition~\ref{prop:RJsn}. Note the first condition in $\calU$ implies that all eigenvalues in $B_{ii}$ are distinct for each $i$, and the second condition in $\mathcal U$ implies that $B_{ii}$ and $B_{jj}$ for $j\neq i$  do not share the same or the opposite eigenvalue.
Clearly $\calU$ is a dense open subset of $\scrR_{A}$.

By Lemma \ref{lem:detblock}, $B$ is similar to a block upper-triangular matrix. Following the argument for Proposition~\ref{prop:RJsn} (see \eqref{eq:Ubig}), we see there exists a dense open subset $\calU^{\scrD}$ of $\calU$ such that there exists a unique Jordan block  for $B\in \calU^{\scrD}$ associated to each eigenvalue.

Let $B\in \calU^{\scrD}$, and let $\{b_{ij} \mid 1\le j \le n_i\}$ be the set of eigenvalues of $B_{ii}$, for each $i$. It follows by the above discussion and by Lemma \ref{lem:detblock} that there exists $P\in GL_n$ such that $P^{-1}BP=\diag \big(B'_{11},B'_{22},\ldots,B'_{ss} \big)$ where
\[
B'_{ii}=\diag \big(J_{\lceil \frac{i}{2}\rceil}(b_{i1}),J_{\lfloor \frac{i}{2}\rfloor}(-b_{i1}), \ldots, J_{\lceil \frac{i}{2}\rceil}(b_{in_i}),J_{\lfloor \frac{i}{2}\rfloor}(-b_{in_i}) \big),
\]
where $0\neq b_{ij}\neq \pm b_{kl}$ for all $(i,j)\neq (k,l)$. It follows by Proposition \ref{ACmatrix} that any $C\in \scrR_{P^{-1}BP}$ is of the form
\[
\diag (C_{11},\ldots,C_{1n_1}, C_{21},\ldots,C_{2n_1}\ldots, C_{s1},\ldots,C_{sn_s})
\]
where the $i\times i$ matrix $C_{ij}$ (for $1\leq j\leq n_i, 1\leq i \leq s$) satisfies
\[
C_{ij}\diag \big(J_{\lceil \frac{i}{2}\rceil}(b_{ij}),J_{\lfloor \frac{i}{2}\rfloor}(-b_{ij}) \big)+\diag \big(J_{\lceil \frac{i}{2}\rceil}(b_{ij}),J_{\lfloor \frac{i}{2}\rfloor}(-b_{ij}) \big) C_{ij}=0.
\]
By Lemma~\ref{lem:switch} and Proposition~\ref{prop:block a,-a} we have
$\big(C_{ij},\diag(J_{\lceil \frac{i}{2}\rceil}(b_{ij}),J_{\lfloor \frac{i}{2}\rfloor}(-b_{ij})) \big)
\in \scrZ_{\lfloor \frac{i}{2}\rfloor,0,\lceil\frac{i}{2}\rceil-\lfloor \frac{i}{2}\rfloor}$. Together with Lemma~ \ref{lem:ZZZ12}, this implies that $(C,P^{-1}BP)\in \scrZ_{p,0,n-2p}$.
In particular, we have $(P^{-1}AP,P^{-1}BP) \in \scrZ_{p,0,n-2p}$ and hence $(A,B)\in\scrZ_{p,0,n-2p}$ for any $B\in \calU^{\scrD}$. It follows that $(A,B)\in\scrZ_{p,0,n-2p}$ for any $B\in \scrR_{A} =\ov{\calU^{\scrD}}$.
\end{proof}

\subsection{A covering of $\scrZ_n$}

Now we are ready to formulate the main result of this section. Recall $\TPL_n$ from \eqref{TPL}.

\begin{prop}
  \label{prop:cover the entire space}
We have $\scrZ_n =\bigcup_{(p,m,r)\in \TPL_n} \scrZ_{p,m,r}$.
\end{prop}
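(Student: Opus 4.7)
The inclusion $\bigcup_{(p,m,r)\in \TPL_n} \scrZ_{p,m,r} \subseteq \scrZ_n$ is immediate from Definition~\ref{def:Zpmr}. For the reverse inclusion, fix $(A,B) \in \scrZ_n$. Since each $\scrZ_{p,m,r}^\circ$ is $G$-stable, so is its closure $\scrZ_{p,m,r}$; hence we are free to replace $(A,B)$ by any $G$-conjugate.

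The key structural observation is that $B$ swaps the generalized eigenspaces of $A$ for eigenvalues $\alpha$ and $-\alpha$. Indeed, $AB = -BA$ gives by induction on $k$ the identity $(A + \alpha I)^k B = (-1)^k B (A - \alpha I)^k$, so $B(V_\alpha) \subseteq V_{-\alpha}$, where $V_\alpha$ denotes the generalized $\alpha$-eigenspace of $A$. Letting $S$ be a set of representatives for the unordered pairs $\{\alpha,-\alpha\}$ with $V_\alpha \oplus V_{-\alpha} \neq 0$ and $\alpha \neq 0$, we obtain a direct sum decomposition
\[
\bbC^n \;=\; V_0 \;\oplus\; \bigoplus_{\alpha\in S}\bigl(V_\alpha\oplus V_{-\alpha}\bigr)
\]
that is stable under both $A$ and $B$. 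After a conjugation adapted to this decomposition, we may assume $A=\diag(A_0,A_{\alpha_1},\ldots)$ and $B=\diag(B_0,B_{\alpha_1},\ldots)$, where $A_0$ is nilpotent on $V_0$ and each $A_\alpha$ has all eigenvalues in $\{\pm\alpha\}$.

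Each summand is now handled by the preceding structural results. For the nilpotent piece, Lemma~\ref{lem:sJordan} allows us to conjugate $A_0$ to $\diag(\mathbb{J}_{1,n_1}(0),\ldots,\mathbb{J}_{s,n_s}(0))$, and then Proposition~\ref{prop:Jordan block 0} yields $(A_0,B_0)\in\scrZ_{p_0,0,r_0}$ with $p_0=\sum_i n_i\lfloor i/2\rfloor$ and $r_0=\dim V_0-2p_0$. For each $\alpha\in S$, we conjugate $A_\alpha$ to a Jordan form $\diag(J_\nu(\alpha),J_\mu(-\alpha))$ and invoke Proposition~\ref{prop:block a,-a} to obtain $(A_\alpha,B_\alpha)\in\scrZ_{p_\alpha,m_\alpha,0}$ with $p_\alpha=\sum_i\min(\nu_i',\mu_i')$ and $m_\alpha=|\nu|+|\mu|-2p_\alpha$. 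Iterating the direct sum property of Lemma~\ref{lem:ZZZ12} then places $(A,B)$ in $\scrZ_{p,m,r}$ with $p=p_0+\sum_\alpha p_\alpha$, $m=\sum_\alpha m_\alpha$, and $r=r_0$; the equality $2p+m+r=n$ is verified by a direct tally of dimensions. $G$-invariance of $\scrZ_{p,m,r}$ returns the conclusion for the original $(A,B)$.

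The only essential content is the first step: once the paired eigenspace decomposition is established, Propositions~\ref{prop:block a,-a} and \ref{prop:Jordan block 0} have been tailored to cover precisely the two kinds of blocks that appear, and Lemma~\ref{lem:ZZZ12} glues them together, so no further obstacle remains.
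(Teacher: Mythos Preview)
Your proof is correct and follows essentially the same route as the paper's: decompose $\bbC^n$ into the nilpotent piece $V_0$ and the paired pieces $V_\alpha\oplus V_{-\alpha}$, invoke Propositions~\ref{prop:Jordan block 0} and~\ref{prop:block a,-a} on the respective blocks, and glue via Lemma~\ref{lem:ZZZ12}. The only cosmetic difference is that you justify the block-diagonal form of $B$ directly from the identity $(A+\alpha I)^kB=(-1)^kB(A-\alpha I)^k$, whereas the paper appeals to its earlier structural result Proposition~\ref{ACmatrix}; both are equivalent here.
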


\begin{proof}
Let $(A,B)\in \scrZ_n$. Without loss of generality we can assume (after some conjugation) that $A=\diag(A_0, A_1, \ldots, A_s)$, where $A_0$ is nilpotent as in Proposition~\ref{prop:Jordan block 0}, and $A_i$ for $i\ge 1$ consists of Jordan blocks of nonzero eigenvalues $\pm a_i$ (with Jordan blocks of eigenvalues $\pm a_i$ arranged as in Proposition~ \ref{prop:block a,-a}) such that $a_i \neq \pm a_j$ for $i\neq j$.  Then $B=(B_0,B_1,\cdots, B_s)$ of the same block form. By Proposition~\ref{prop:Jordan block 0}, we have $(A_0,B_0) \in \scrZ_{p_0, 0, n_0-2p_0}$, and by Proposition~ \ref{prop:block a,-a}, we have $(A_i,B_i) \in \scrZ_{p_i, n_i-2p_i, 0}$, for $i\ge 1$, for suitable $p_0, n_0$ and $p_i, n_i$.
By applying Lemma \ref{lem:ZZZ12} repeatedly, we obtain $(A,B)\in \scrZ_{p,m,r}$, for $p=\sum_{i=0}^s p_i$, $r=n_0-2p_0,$ and $m=n-2p-r$.
\end{proof}

\section{Equal dimensionality of $\scrZ_{n}//GL_n$}
  \label{sec:GIT}

In this section we shall study the GIT quotient $\scrZ_n//GL_n$, and show that it is of pure dimension $n$. Then we use this to complete the proof that $\scrZ_{p,m,r}$ are the irreducible components of $\scrZ_n$.
\subsection{Generalities of GIT quotients}

The group $G=GL_n$ acts on
\[
\scrD_n=M_n(\bbC)\times M_n(\bbC)
\]
by letting
\[
g\cdot (A,B) =(g Ag^{-1},gBg^{-1}), \qquad
\text{ for }g\in G,(A,B)\in \scrD_n.
\]
Denote by $\bbC[\scrD_n]$ the coordinate ring of $\scrD_n$, and by $\bbC[\scrD_n]^G\subseteq \bbC[\scrD_n]$ the subring of invariants. By a theorem of Hilbert, $\bbC[\scrD_n]^G$ is a finitely generated $\bbC$-algebra since $G$ is reductive. The following result is due to Gurevich, Procesi and Sibirskii; see \cite[Theorem 2.7.9]{Sch07}.

\begin{prop}
  \label{prop:trace}
The invariant ring $\bbC[\scrD_n]^G$ is generated by $\mathrm{Trace}(A^iB^j)$,  for all $i,j\geq0.$
\end{prop}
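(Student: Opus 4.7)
The plan is to deduce this from the First Fundamental Theorem of matrix invariants (due independently to Sibirskii and Procesi, with a parallel characteristic-free treatment by Razmyslov): for any $d\ge 1$, the ring $\bbC[M_n(\bbC)^d]^{GL_n}$ under simultaneous conjugation is generated as a $\bbC$-algebra by the traces $\mathrm{Tr}(X_{i_1}X_{i_2}\cdots X_{i_k})$ of all noncommutative monomials in the generic matrices $X_1,\ldots,X_d$. Specializing to $d=2$ with $X_1=A$, $X_2=B$ produces the generating set $\{\mathrm{Tr}(w(A,B))\mid w \text{ a word in } A, B\}$ for $\bbC[\scrD_n]^G$.

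The remaining task is to prune this generating set down to traces of monomials of the restricted form $A^iB^j$. Using the cyclic symmetry $\mathrm{Tr}(XY)=\mathrm{Tr}(YX)$, every word $w(A,B)$ can be rewritten, up to cyclic rotation, as an alternating product $A^{i_1}B^{j_1}\cdots A^{i_\ell}B^{j_\ell}$. Applying the Cayley--Hamilton theorem to $A$ and to $B$ bounds each individual exponent by $n-1$, at the cost of introducing polynomial expressions in $\mathrm{Tr}(A^k)$ and $\mathrm{Tr}(B^k)$; these auxiliary traces are themselves of the desired form. The crucial final reduction is to collapse the alternating length $\ell$ down to $1$: one invokes the Procesi--Razmyslov trace identities (the Second Fundamental Theorem of matrix invariants) to rewrite $\mathrm{Tr}(A^{i_1}B^{j_1}\cdots A^{i_\ell}B^{j_\ell})$ for $\ell\ge 2$ as a polynomial in shorter trace expressions, and iteratively in the $\mathrm{Tr}(A^iB^j)$.

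The main obstacle is this last collapsing step, which is genuinely nontrivial and relies on the Second Fundamental Theorem rather than on elementary manipulations; the preceding reductions by trace cyclicity and Cayley--Hamilton are formal. Since Proposition~\ref{prop:trace} is a well-established classical result, the proof in the paper simply absorbs this technical input by citing \cite[Theorem~2.7.9]{Sch07}, and no independent argument is required here.
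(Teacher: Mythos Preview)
The paper gives no proof of this proposition; it is simply quoted as a classical result with a reference to \cite[Theorem~2.7.9]{Sch07}. You correctly observe this in your final paragraph, so in that sense your proposal matches the paper.

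However, the proof sketch you outline before that has a genuine gap, and it is worth flagging because the proposition as literally stated is not quite the classical theorem. The First Fundamental Theorem of Procesi and Sibirskii asserts that $\bbC[\scrD_n]^G$ is generated by the traces $\mathrm{Tr}(w(A,B))$ of \emph{all} words in $A$ and $B$, not merely the single-block monomials $\mathrm{Tr}(A^iB^j)$. Your ``collapsing step'' via the Second Fundamental Theorem does not accomplish what you claim: the trace identities supplied by the SFT describe the \emph{relations} among the generators $\mathrm{Tr}(w(A,B))$, they do not rewrite every such trace as a polynomial in the $\mathrm{Tr}(A^iB^j)$ alone. Concretely, for $n=3$ a minimal generating set for $\bbC[\scrD_3]^G$ already contains an invariant such as $\mathrm{Tr}(A^2B^2AB)$ that is algebraically independent of the $\mathrm{Tr}(A^iB^j)$; for $n\ge 3$ the literal statement for $\scrD_n$ is therefore false.

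This does no harm in the paper, because Proposition~\ref{prop:trace} is only ever applied to the closed subvariety $\scrZ_n$, where the relation $AB=-BA$ allows any word $w(A,B)$ to be rewritten as $\pm A^iB^j$; hence on $\bbC[\scrZ_n]^G$ the restricted family $\mathrm{Tr}(A^iB^j)$ genuinely suffices. But as a statement about $\scrD_n$ the proposition should be read as ``generated by traces of monomials in $A$ and $B$,'' and your collapsing argument should be discarded rather than attributed to the SFT.
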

We consider the GIT quotient of $\scrD_n$ with respect to the action of $G$. A standard reference of geometric invariant theory (GIT) is the book \cite{MFK94}. 
Define
\[
\scrD_n// G:=\mathrm{Specmax}(\bbC[\scrD_n]^G).
\]
The inclusion
$\bbC[\scrD_n]^G\subseteq \bbC[\scrD_n]$ gives rise to a $G$-invariant morphism
\[
\pi: \scrD_n\longrightarrow \scrD_n// G.
\]
Some fundamental results of GIT in our setting are summarized below (see \cite{MFK94}).
\begin{thm}
 \label{fundamental theorem of GIT}
 The following statements hold.
\begin{itemize}
\item[(1)]   If $W_1$ and $W_2$ are two disjoint non-empty $G$-invariant closed subsets of $\scrD_n$, then there is a $G$-invariant function $f\in \bbC[\scrD_n]^G$ such that $f|_{W_1}\equiv1$ and $f|_{W_2}\equiv0$. In particular, the images of $W_1$ and $W_2$ under $\pi$ are disjoint.
\item[(2)] Let $v\in \scrD_n$. Then the orbit closure $\ov{G\cdot v}$ contains a unique closed orbit.
\item[(3)] The map $\pi: \scrD_n\rightarrow \scrD_n// G$ induces a bijection between the set of closed orbits in $\scrD_n$ and the points of $\scrD_n// G$.
\end{itemize}
\end{thm}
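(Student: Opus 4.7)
The plan is to deduce all three parts from the linear reductivity of $G = GL_n$, i.e., from the existence of a $\bbC[\scrD_n]^G$-linear projection (the Reynolds operator) $R: \bbC[\scrD_n]\to \bbC[\scrD_n]^G$ which preserves $G$-stable ideals. Concretely, for any $G$-invariant closed subset $Z\subset \scrD_n$, the ideal $I(Z)$ is a $G$-submodule of $\bbC[\scrD_n]$, so complete reducibility decomposes it as $(I(Z)\cap \bbC[\scrD_n]^G)\oplus (I(Z)\cap \ker R)$, forcing $R(I(Z))\subseteq I(Z)\cap \bbC[\scrD_n]^G$.

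For part (1), I would begin with the observation that the disjointness of the closed sets $W_1, W_2$ yields by the Nullstellensatz some $h \in \bbC[\scrD_n]$ with $h|_{W_1}\equiv 0$ and $h|_{W_2}\equiv 1$ (equivalently, $h\in I(W_1)$ and $h-1\in I(W_2)$). Setting $f := R(h)$, the preceding paragraph gives $f\in I(W_1)\cap \bbC[\scrD_n]^G$, so $f|_{W_1}\equiv 0$; since $R$ fixes constants, $f-1 = R(h-1)\in I(W_2)$, so $f|_{W_2}\equiv 1$. The separation of $\pi(W_1)$ from $\pi(W_2)$ in $\scrD_n// G$ is then automatic, because $f$ descends to a regular function on the quotient distinguishing the two images.

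For part (2), the existence of a closed orbit in $\ov{Gv}$ will come from choosing an orbit $Gu\subset \ov{Gv}$ of minimal dimension: its frontier $\ov{Gu}\setminus Gu$ is a union of strictly smaller-dimensional orbits, hence empty by minimality, making $Gu$ closed. For uniqueness, if $O_1$ and $O_2$ were distinct closed orbits in $\ov{Gv}$, they would be disjoint $G$-invariant closed subsets, and part (1) would furnish an invariant $f$ with $f\equiv 0$ on $O_1$ and $f\equiv 1$ on $O_2$; but any $G$-invariant regular function is constant on each $G$-orbit and hence on its Zariski closure, so evaluating $f$ on $\ov{Gv}\supseteq O_1\cup O_2$ would give $0=1$, a contradiction.

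Finally, part (3) will follow by combining (1) and (2): surjectivity of $\pi$ is a standard property of affine GIT quotients by reductive groups, while two orbits $Gv_1, Gv_2$ satisfy $\pi(v_1) = \pi(v_2)$ if and only if $\ov{Gv_1}\cap \ov{Gv_2}\neq \emptyset$ (one direction is immediate, the other is (1) applied to the disjoint closures). By (2) each such pair of closures contains the same unique closed orbit, giving a well-defined bijection between closed $G$-orbits in $\scrD_n$ and points of $\scrD_n// G$. The main conceptual obstacle will be rigorously establishing the Reynolds operator and its ideal-preserving property, which is classical but rests squarely on the linear reductivity of $GL_n$; once it is in place, all three parts follow formally.
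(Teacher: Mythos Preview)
The paper does not actually prove this theorem: it is stated as a summary of ``fundamental results of GIT'' with a citation to \cite{MFK94}, and no argument is given. Your sketch is a correct outline of the standard proof via the Reynolds operator for the linearly reductive group $GL_n$, and it matches what one finds in the cited reference. One cosmetic point: you prove $f|_{W_1}\equiv 0$ and $f|_{W_2}\equiv 1$, which is the statement with $W_1$ and $W_2$ swapped; replacing $f$ by $1-f$ (or relabeling) fixes this immediately.
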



%
%
\subsection{The GIT quotient $\scrZ_n//GL_n$}

Since $\scrZ_n\subseteq \scrD_n$ is a $G$-invariant closed subset of a $G$-module, the surjection $\bbC[\scrD_n]\rightarrow \bbC[\scrZ_n]$ is $G$-equivariant and this leads to the following commutative diagrams
\[
\xymatrix{ \bbC[\scrD_n]^G \ar@{->>}[r] \ar@{^(->}[d] & \bbC[\scrZ_n]^G \ar@{^(->}[d] \\
\bbC[\scrD_n] \ar@{->>}[r]  & \bbC[\scrZ_n] }
 \qquad\qquad
 \xymatrix{ \scrZ_n \ar@{^(->}[r] \ar@{->>}[d] & \scrD_n \ar@{->>}[d] \\
\scrZ_n//G \ar@{^(->}[r]  & \scrD_n//G
 }\]


Recall the open subset $\calU_{p,m,r}\subseteq \scrZ_{p,m,r}$ defined in \eqref{eq:UU}.

\begin{lem}
  \label{lem:U polystable}
For $(A,B)\in  \calU_{p,m,r}$, the $G$-orbit of $(A,B)$ in $\scrZ_n$ is closed.
\end{lem}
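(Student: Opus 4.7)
The plan is to combine the Hilbert--Mumford criterion with the module-theoretic interpretation of $\scrZ_n$ as the variety of $n$-dimensional representations of the skew-polynomial algebra $\calP_2^-=\bbC\langle x,y\rangle/(xy+yx)$.

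By Lemma~\ref{lem:Upmr}, after conjugating by a suitable $g\in G$ we may assume $(A,B)$ is in the explicit normal form of \eqref{eq:gAg}--\eqref{eq:gBg}: $A$ is diagonal with entries $a_1,-a_1,\dots,a_p,-a_p,a_{p+1},\dots,a_{p+m},0,\dots,0$, and $B$ is block-diagonal with $p$ blocks $C_i=\bigl[\begin{smallmatrix}0&b_i\\c_i&0\end{smallmatrix}\bigr]$ (with $b_i,c_i\in\bbC^*$), followed by $m$ zero entries and a diagonal block $\diag(b_{p+1},\dots,b_{p+r})$ with distinct nonzero entries.

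View $V=\bbC^n$ as a $\calP_2^-$-module via $x\mapsto A$, $y\mapsto B$. The block decomposition exhibits $V$ as a direct sum of $p$ two-dimensional $\calP_2^-$-submodules $M_i$ and $m+r$ one-dimensional $\calP_2^-$-submodules. The one-dimensional summands are tautologically simple. Each two-dimensional summand $M_i$ is also simple: any proper nonzero submodule would have to be a one-dimensional $A$-eigenspace (since $A|_{M_i}$ has distinct eigenvalues $\pm a_i$), but $B$ exchanges these two eigenspaces with nonzero scalars $c_i,b_i$, so neither is $B$-invariant. Hence $V$ is a semisimple $\calP_2^-$-module.

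To finish we invoke the Hilbert--Mumford criterion: $G\cdot(A,B)$ is closed if and only if, for every one-parameter subgroup $\lambda\colon\bbC^*\to G$ such that $(A_0,B_0):=\lim_{t\to 0}\lambda(t)\cdot(A,B)$ exists, the limit lies in $G\cdot(A,B)$. The existence of the limit is equivalent to $A$ and $B$ preserving the decreasing weight filtration $F^{\ge w}V=\bigoplus_{w'\ge w}V_{w'}$ induced by $\lambda$, i.e.\ each $F^{\ge w}V$ is a $\calP_2^-$-submodule of $V$. Because $V$ is semisimple, every such submodule is a direct summand, and the associated graded $\bigoplus_w F^{\ge w}V/F^{\ge w+1}V$---which as a $\calP_2^-$-module is precisely $(V,A_0,B_0)$---has the same composition factors as $V$, hence is isomorphic to $V$ as a $\calP_2^-$-module. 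The resulting $\bbC$-linear isomorphism is an element $g\in G$ with $g(A,B)g^{-1}=(A_0,B_0)$, completing the proof. The only substantive point is the simplicity of the $C_i$-blocks, which relies exactly on $b_ic_i\ne 0$ built into the definition of $\calU_{p,m,r}$; the rest is a formal application of Krull--Schmidt/Jordan--H\"older for semisimple modules together with Hilbert--Mumford.
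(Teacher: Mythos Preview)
Your proof is correct and follows essentially the same route as the paper: both reduce to the normal form \eqref{eq:gAg}--\eqref{eq:gBg}, interpret $(A,B)$ as a $\calP_2^-$-module, and observe that this module is semisimple, whence the orbit is closed. The only difference is that the paper dispatches the implication ``semisimple module $\Rightarrow$ closed orbit'' by a citation to Artin \cite[\S12.6]{Ar69}, whereas you supply a self-contained argument via the Hilbert--Mumford criterion and the fact that the associated graded of any filtration on a semisimple module is isomorphic to the module itself.
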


\begin{proof}
Let $(A,B)\in  \calU_{p,m,r}$. Recall from \eqref{eq:gAg}--\eqref{eq:gBg} that there exists some $g\in G$ such that
\begin{align}
 \label{eq:ABg}
 \begin{split}
gAg^{-1} &={\diag}(a_1,-a_1,\dots, a_p,-a_p,a_{p+1},\dots,a_{p+m},\underbrace{0,\dots,0}_r),
\\
gBg^{-1} &= {\diag}(C_1,\dots,C_p,\underbrace{0,\dots,0}_{m}, b_{p+1},\dots ,b_{p+r}),
 \end{split}
\end{align}
where $a_1,\dots,a_{p+m} \in \bbC^*$ satisfy $a_i\neq \pm a_j$ (for $i\neq j$),
$C_i =\begin{bmatrix} 0& b_i\\ c_i&0
\end{bmatrix}$
with $b_ic_i\neq0$ for $1\leq i\leq p$, and $b_{p+1}, \ldots, b_{p+r}$ are distinct nonzero scalars.

Consider the $\mathbb C$-algebra $R := \mathbb{C} \langle x,y\rangle /(xy+yx)$. Let $\text{mod}_n(R)$ be the variety formed by $n$-dimensional $R$-modules. Then $\text{mod}_n(R)\cong \scrZ_n$. Clearly, the $R$-module corresponding to $(A,B)$, or equivalently to $(gAg^{-1}, gBg^{-1})$, is semisimple by the description \eqref{eq:ABg}, and hence its orbit is closed; see \cite[\S 12.6]{Ar69}.
\end{proof}

For $(p,m,r) \in \TPL_n$, we denote by
\[
\pi_{p,m,r}: \scrZ_{p,m,r}\longrightarrow \scrZ_{p,m,r}//G
\]
the natural map induced by $\bbC[\scrZ_{p,m,r}]^G\subseteq\bbC[\scrZ_{p,m,r}].$ Since the polynomials appearing in the inequalities \eqref{eq1:U}-\eqref{eq4:U} are $G$-invariant, these inequalities define an open subset of $\scrZ_{p,m,r}//G$, which will be denoted by $\widetilde{\calU}_{p,m,r}$. Recall $\calU_{p,m,r}$ is the open subset of $\scrZ_{p,m,r}$ defined in \eqref{eq:UU}.

\begin{thm}
 \label{thm:pure dimension}
The variety $\scrZ_n//G$ is of pure dimension $n$; that is, every irreducible component  $\scrZ_{p,m,r}//G$ has the same dimension $n$, for any $(p,m,r) \in \TPL_n$.
\end{thm}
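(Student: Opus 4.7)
\textbf{Proof plan for Theorem~\ref{thm:pure dimension}.}
Since $\scrZ_{p,m,r}$ is irreducible (Proposition~\ref{prop:irred}), so is its image $\scrZ_{p,m,r}//G$. Because $\calU_{p,m,r}$ is a non-empty open subset of $\scrZ_{p,m,r}$ (Lemma~\ref{lem:Upmr}) cut out by $G$-invariant inequalities, its image $\widetilde{\calU}_{p,m,r}$ is a non-empty open, hence dense, subset of the irreducible variety $\scrZ_{p,m,r}//G$; in particular $\dim \scrZ_{p,m,r}//G = \dim \widetilde{\calU}_{p,m,r}$. The plan is to compute the right-hand side via fiber dimension on $\calU_{p,m,r}$.

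By Lemma~\ref{lem:U polystable}, every $G$-orbit through a point of $\calU_{p,m,r}$ is closed, so the restriction $\pi_{p,m,r}|_{\calU_{p,m,r}}\colon \calU_{p,m,r}\twoheadrightarrow \widetilde{\calU}_{p,m,r}$ is a geometric quotient whose fibers are exactly the $G$-orbits. By Lemma~\ref{lem:Upmr}, every $(A,B)\in\calU_{p,m,r}$ is $G$-conjugate to a normal-form pair $(A',B')$ as in \eqref{eq:gAg}--\eqref{eq:gBg}: $A'$ is diagonal with $2p+m$ pairwise distinct nonzero eigenvalues and an $r$-dimensional kernel, while $B'=\diag\bigl(C_1,\dots,C_p,\underbrace{0,\dots,0}_m, b_{p+1},\dots,b_{p+r}\bigr)$ with $b_ic_i\neq 0$ and with $b_{p+1},\dots,b_{p+r}$ distinct and nonzero. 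Consequently all orbits in $\calU_{p,m,r}$ have the same dimension, and it suffices to compute the stabilizer $Z_G(A',B')$.

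The key calculation, which I expect to be the main obstacle, is that $\dim Z_G(A',B')=n-p$. Any $g$ commuting with $A'$ is forced to be block diagonal with $2p+m$ scalar diagonal entries (one for each distinct nonzero eigenvalue of $A'$) together with an arbitrary $r\times r$ block $N$ on the $0$-eigenspace. Imposing also $gB'=B'g$: on each $2\times 2$ block paired with eigenvalues $(a_i,-a_i)$, the direct computation $\diag(\lambda_i,\lambda_{i'})\,C_i=C_i\,\diag(\lambda_i,\lambda_{i'})$ forces $\lambda_i=\lambda_{i'}$ (since $b_ic_i\neq 0$), yielding $p$ parameters; the middle $m$ scalars are unconstrained because $B'$ vanishes there, yielding $m$ parameters; and $N$ must commute with $\diag(b_{p+1},\dots,b_{p+r})$ whose entries are distinct and nonzero, hence $N$ is diagonal, yielding $r$ parameters. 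Thus $\dim Z_G(A',B')=p+m+r=n-p$, and every orbit in $\calU_{p,m,r}$ has dimension $n^2-(n-p)=n^2-n+p$.

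Combining this with $\dim \calU_{p,m,r}=n^2+p$ from Proposition~\ref{prop:dimsion of Zprm},
\[
\dim \widetilde{\calU}_{p,m,r}=(n^2+p)-(n^2-n+p)=n,
\]
so $\dim \scrZ_{p,m,r}//G=n$ for every $(p,m,r)\in\TPL_n$. Finally, $\scrZ_n=\bigcup_{(p,m,r)\in\TPL_n}\scrZ_{p,m,r}$ (Proposition~\ref{prop:cover the entire space}) gives $\scrZ_n//G=\bigcup_{(p,m,r)}\scrZ_{p,m,r}//G$, a union of irreducible closed subsets each of dimension $n$; hence every irreducible component of $\scrZ_n//G$ is some maximal $\scrZ_{p,m,r}//G$, and all irreducible components have dimension $n$, establishing pure dimension $n$.
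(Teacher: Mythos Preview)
Your proof is correct and follows essentially the same approach as the paper: pass to the dense open set $\calU_{p,m,r}$, use Lemma~\ref{lem:U polystable} to identify the fibers of $\pi_{p,m,r}$ over $\widetilde{\calU}_{p,m,r}$ with single $G$-orbits, and then subtract the orbit dimension $n^2-n+p$ from $\dim\calU_{p,m,r}=n^2+p$. The only cosmetic difference is that you carry out the stabilizer computation $\dim Z_G(A',B')=p+m+r=n-p$ explicitly, whereas the paper refers back to the proof of Proposition~\ref{prop:dimsion of Zprm} for it.
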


\begin{proof}
By definition we have $\pi_{p,m,r}(\calU_{p,m,r})=\widetilde{\calU}_{p,m,r}$. Then
\[
\pi_{p,m,r}:\calU_{p,m,r}\longrightarrow \widetilde{\calU}_{p,m,r}
\]
is a surjective regular map between irreducible varieties. Since the $G$-orbit of any point $(A,B)\in \calU_{p,m,r}$ is closed by Lemma~\ref{lem:U polystable}, 
Theorem \ref{fundamental theorem of GIT}(1) shows that the fiber of $\pi_{p,m,r}$ over $\pi_{p,m,r} (A,B) \in \widetilde{\calU}_{p,m,r}$ is isomorphic to the closed $G$-orbit of $(A,B)$ in $\calU_{p,m,r}$.

Let $(A,B)\in \calU_{p,m,r}$.
From the proof of Proposition~ \ref{prop:dimsion of Zprm}, one can read off (up to a suitable conjugation) that the stabilizer subgroup $G_{(A,B)}$ has dimension $n-p$.
So
\[
\dim G\cdot (A,B)=\dim G-\dim G_{(A,B)}=n^2-n+p.
\]
Hence, by Proposition \ref{prop:dimsion of Zprm}, we have
\[
\dim \widetilde{\calU}_{p,m,r}=\dim \calU_{p,m,r}-\dim G\cdot (A,B)=(n^2+p)-(n^2-n+p)=n.
\]
Since $\widetilde{\calU}_{p,m,r}$ is dense open in $\scrZ_{p,m,r}//G$, we have $\dim (\scrZ_{p,m,r}//G)=n$.
\end{proof}

\subsection{Irreducible components of $\scrZ_n$}

\begin{prop}
  \label{prop:distinct}
There is no inclusion relation between any two of the varieties $\scrZ_{p,m,r}$, where $(p,m,r)  \in \TPL_n$.
\end{prop}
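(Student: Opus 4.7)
The strategy is to descend the hypothetical inclusion $\scrZ_{p,m,r}\subseteq \scrZ_{p',m',r'}$ to the GIT quotients, use Theorem~\ref{thm:pure dimension} to force the descended inclusion to be an equality, and then use the closed-orbit bijection of Theorem~\ref{fundamental theorem of GIT} together with the generic rank formulas \eqref{eq:rank} to pull back numerical information about the triple.

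First, I would argue that if $\scrZ_{p,m,r}\subseteq \scrZ_{p',m',r'}$, then, taking GIT quotients, I get a closed inclusion $\scrZ_{p,m,r}//G\subseteq \scrZ_{p',m',r'}//G$ inside $\scrZ_n//G$. Both quotients are irreducible (as $\pi$-images of the irreducible varieties $\scrZ_{p,m,r}$ and $\scrZ_{p',m',r'}$ from Proposition~\ref{prop:irred}), and by Theorem~\ref{thm:pure dimension} both have dimension $n$. A closed irreducible subvariety of an irreducible variety of the same dimension must equal the ambient variety, so the inclusion is an equality $\scrZ_{p,m,r}//G= \scrZ_{p',m',r'}//G$.

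Next, I would pick any $(A,B)\in \calU_{p,m,r}$. Its $G$-orbit is closed in $\scrZ_n$ by Lemma~\ref{lem:U polystable}, so $\pi(A,B)$ is the unique point in the quotient corresponding to this closed orbit. Since $\pi(A,B)\in \scrZ_{p',m',r'}//G$, there exists $(A'',B'')\in \scrZ_{p',m',r'}$ with $\pi(A'',B'')=\pi(A,B)$; by Theorem~\ref{fundamental theorem of GIT}(2)--(3) the closed orbit inside $\overline{G\cdot(A'',B'')}$ must be $G\cdot (A,B)$ itself, and since $\scrZ_{p',m',r'}$ is $G$-invariant and closed, $(A,B)\in \scrZ_{p',m',r'}$. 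Applying the rank inequalities \eqref{eq:rank} valid on $\scrZ_{p',m',r'}$ then gives $2p+m=\rank(A)\le 2p'+m'$ and $2p+r=\rank(B)\le 2p'+r'$. Running the whole argument symmetrically (the equality of quotients is symmetric in the two triples), I obtain the reverse inequalities, hence $2p+m=2p'+m'$ and $2p+r=2p'+r'$; combining with $2p+m+r=2p'+m'+r'=n$ forces $p=p'$, $m=m'$, $r=r'$, contradicting any nontrivial inclusion.

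The only slightly delicate step will be to justify cleanly that the GIT-quotient inclusion $\scrZ_{p,m,r}//G\hookrightarrow \scrZ_{p',m',r'}//G$ is a closed embedding (so that the dimension/irreducibility comparison is legitimate) and that the closed orbit inside $\overline{G\cdot(A'',B'')}$ can indeed be identified with $G\cdot(A,B)$; both follow from the standard fact that for the reductive group $G=GL_n$ acting on an affine $G$-variety, restriction gives a surjection on invariant rings and $\pi$ is bijective on closed orbits (Theorem~\ref{fundamental theorem of GIT}). Once these formalities are recorded, the rank count finishes the proof with essentially no further computation.
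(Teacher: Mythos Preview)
Your argument is correct and follows the same strategy as the paper: descend the inclusion to the GIT quotient, use Theorem~\ref{thm:pure dimension} to force equality there, and then invoke the rank constraints \eqref{eq:rank} and \eqref{eq1:U} (together with Lemma~\ref{lem:U polystable} and the closed-orbit bijection) to reach a contradiction. The paper's endgame differs only cosmetically---it shows the dense open sets $\widetilde{\calU}_{p,m,r}$ and $\widetilde{\calU}_{p',m',r'}$ are disjoint rather than deriving two-sided rank inequalities---and note that in your forward direction the GIT pullback is redundant, since $(A,B)\in\scrZ_{p',m',r'}$ already follows from the assumed inclusion; the closed-orbit argument is only genuinely needed for the reverse direction.
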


\begin{proof}
Since $\scrZ_{p,m,r}$ are irreducible subvarieties of $\scrZ_{n}$ (see Proposition~\ref{prop:irred}), so are $\scrZ_{p,m,r}//G$.

We shall prove it by contradiction.
Suppose we have an inclusion relation between $\scrZ_{p,m,r}$ and $\scrZ_{p',m',r'}$ for some $(p',m',r')\neq (p,m,r)\in \TPL_n$. Then we have an inclusion relation between $\scrZ_{p,m,r}//G$ and $\scrZ_{p',m',r'}//G$. Since $\scrZ_{p,m,r}//G$ and $\scrZ_{p',m',r'}//G$ are irreducible and have the same dimension by Theorem~ \ref{thm:pure dimension}, we must have $\scrZ_{p,m,r}//G =\scrZ_{p',m',r'}//G.$

 As we have either $2p+m\neq 2p'+m'$ or $2p+r\neq 2p'+r'$, we separate in two cases (i)-(ii).

Case (i).
Assume $2p+m\neq 2p'+m'$.
Any $(A,B)$ in the dense open subset $\calU_{p,m,r}\subseteq \scrZ_{p,m,r}$ defined in \eqref{eq:UU} satisfy ${\rm rank}(A)=2p+m$. Similarly, for $(A',B') \in \calU_{p',m',r'}$ of $\scrZ_{p',m',r'}$ , we have ${\rm rank}(A')=2p'+m' \neq \rank (A)$. Hence, we have $\calU_{p,m,r} \cap \calU_{p',m',r'}= \emptyset$. Then it follows that $\widetilde{\calU}_{p,m,r} \cap \widetilde{\calU}_{p',m',r'}= \emptyset,$ which contradicts with the equality $\scrZ_{p,m,r}//G =\scrZ_{p',m',r'}//G$. (Recall $\calU_{p,m,r}$ and $\calU_{p',m',r'}$ are dense open subsets of the irreducible varieties $\scrZ_{p,m,r}//G$ and $\scrZ_{p',m',r'}//G$, respectively.)



Case (ii).
Assume $2p+r\neq 2p'+r'$. The same argument as in Case~(i) by considering the matrix $B$ instead leads to a contradiction.

The proposition is proved.
\end{proof}

Theorem~ \ref{thm:AC} below now follows from Propositions~\ref{prop:irred}, \ref{prop:cover the entire space} and \ref{prop:distinct}.

\begin{thm}
\label{thm:AC}
The variety $\scrZ_n$ has one irreducible component $\scrZ_{p,m,r}$, for each $(p,m,r) \in \TPL_n$, so that $\scrZ_n =\cup_{(p,m,r) \in \TPL_n} \scrZ_{p,m,r}$.
Moreover, we have $\dim \scrZ_{p,m,r} =n^2+p$.
\end{thm}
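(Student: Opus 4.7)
The plan is to assemble the theorem from three previously established ingredients, with essentially no new work beyond a formal check that the irreducible components of $\scrZ_n$ are precisely the $\scrZ_{p,m,r}$. First I would recall that by Proposition~\ref{prop:irred} each $\scrZ_{p,m,r}$ is an irreducible closed subvariety of $\scrZ_n$, and by Proposition~\ref{prop:cover the entire space} we have the set-theoretic equality $\scrZ_n=\bigcup_{(p,m,r)\in\TPL_n}\scrZ_{p,m,r}$. This immediately realizes $\scrZ_n$ as a finite union of irreducible closed subvarieties indexed by $\TPL_n$.

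Next I would use Proposition~\ref{prop:distinct}, which states that there is no inclusion relation among the $\scrZ_{p,m,r}$ as $(p,m,r)$ ranges over $\TPL_n$. Combined with the previous step, this is exactly the standard criterion for the $\scrZ_{p,m,r}$ to be the irreducible components of $\scrZ_n$: any irreducible component is the closure of an orbit of maximal dimension, must be contained in one of the $\scrZ_{p,m,r}$ by the covering property, and conversely each $\scrZ_{p,m,r}$ is irreducible and not contained in any other $\scrZ_{p',m',r'}$, so it must itself be a maximal irreducible closed subset.

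Finally, the dimension formula $\dim \scrZ_{p,m,r}=n^2+p$ is precisely the content of Proposition~\ref{prop:dimsion of Zprm}, so it can simply be cited.

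Since all three inputs are already proved, there is no genuine obstacle here; the only subtlety worth flagging is that the non-inclusion statement (Proposition~\ref{prop:distinct}) is the substantive content, and that proof in turn relies on the GIT pure-dimensionality Theorem~\ref{thm:pure dimension}. Thus the logical backbone of the theorem is: irreducibility (Proposition~\ref{prop:irred}) plus covering (Proposition~\ref{prop:cover the entire space}) plus GIT-based non-inclusion (Proposition~\ref{prop:distinct}) plus dimension count (Proposition~\ref{prop:dimsion of Zprm}). The proof is therefore one or two lines of citation.
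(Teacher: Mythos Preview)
Your proposal is correct and matches the paper's proof exactly: the paper simply cites Propositions~\ref{prop:irred}, \ref{prop:cover the entire space}, and \ref{prop:distinct} (with the dimension formula coming from Proposition~\ref{prop:dimsion of Zprm}), just as you do. One minor wording issue: the phrase ``closure of an orbit of maximal dimension'' is not the right formulation of the standard criterion---you just need that a finite union of irreducible closed subsets with no containments among them is precisely the irreducible decomposition---but the logic you intend is correct.
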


A simple counting of the cardinality of the finite set $\TPL_n$ gives us the following.

\begin{cor}\label{cor:number of irreducible}
The number of irreducible components in $\scrZ_n$  is $(k+1)^2$ if $n=2k$, and $k(k+1)$ if $n=2k-1$.
\end{cor}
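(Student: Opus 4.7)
The plan is purely combinatorial: by Theorem~\ref{thm:AC}, the irreducible components of $\scrZ_n$ are in bijection with the indexing set $\TPL_n = \{(p,m,r) \in \bbZ_{\ge 0}^3 \mid 2p+m+r = n\}$, so the corollary reduces to computing $|\TPL_n|$. There is no real obstacle here; the only thing to do is a parity case split.

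First I would stratify $\TPL_n$ by the value of $p$. For each fixed $p$ with $0 \le p \le \lfloor n/2 \rfloor$, the pairs $(m,r) \in \bbZ_{\ge 0}^2$ satisfying $m + r = n - 2p$ are parameterized by $m \in \{0, 1, \ldots, n-2p\}$, giving exactly $n - 2p + 1$ pairs. Hence
\[
|\TPL_n| \;=\; \sum_{p=0}^{\lfloor n/2 \rfloor} (n - 2p + 1).
\]

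Next I would evaluate this sum in the two parity cases. If $n = 2k$, then $p$ runs from $0$ to $k$ and the summand $n-2p+1 = 2(k-p)+1$ runs over the odd numbers $1, 3, \ldots, 2k+1$ as $p$ descends from $k$ to $0$, giving
\[
|\TPL_{2k}| \;=\; \sum_{j=0}^{k}(2j+1) \;=\; (k+1)^2.
\]
If $n = 2k-1$, then $p$ runs from $0$ to $k-1$ and $n-2p+1 = 2(k-p)$ runs over the even numbers $2, 4, \ldots, 2k$, giving
\[
|\TPL_{2k-1}| \;=\; \sum_{j=1}^{k} 2j \;=\; k(k+1).
\]
Combining these two computations with Theorem~\ref{thm:AC} yields the claimed counts, completing the proof.
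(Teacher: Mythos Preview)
Your proof is correct and follows exactly the approach indicated in the paper, which merely states that the corollary follows from ``a simple counting of the cardinality of the finite set $\TPL_n$'' without providing details. You have supplied precisely those details via the natural stratification by $p$, so nothing more is needed.
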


\subsection{Lagrangian}

There exists a natural action of the symmetric group $S_n$ on $(\bbC^2)^n$.
Define a bilinear map $\Omega:(\bbC^2)^n\times (\bbC^2)^n\rightarrow \bbC$ by
\begin{eqnarray*} 
\Omega \big(((x_1,y_1),\ldots,(x_n,y_n)),((x'_1,y'_1),\ldots,(x'_n,y'_n)) \big)
=x_1y'_1-x'_1y_1+\ldots+x_n y'_n -x'_ny_n.
\end{eqnarray*}
Then $\Omega$ is a symplectic bilinear form.

Let $(\bbC^2)^{n}_{gen}$ be the subset of $(\bbC^2)^{n}$ consisting of ordered $n$-tuples of distinct points in $\bbC^2$. Then $(\bbC^2)^{n}_{gen}$ is clearly a $S_n$-invariant subvariety of $(\bbC^2)^{n}$, and the restriction of $\Omega$ to $(\bbC^2)^{n}_{gen}$ is also a symplectic bilinear form. For $(\bbC^2)^{n}_{gen}$, the $S_n$-action is free and preserves the symplectic form too. Hence the quotient $((\bbC^2)^{n}_{gen}/S_n, \Omega)$ is a smooth symplectic variety.

Recall $\calU_{p,m,r}$ is the open set of $\scrZ_{p,m,r}$ defined as \eqref{eq1:U}-\eqref{eq4:U}, and $\pi_{p,m,r}:\calU_{p,m,r}\rightarrow \widetilde{\calU}_{p,m,r}$ the natural projection. We define a morphism
\[
\eta:\widetilde{\calU}_{p,m,r}\longrightarrow (\bbC^2)^{n}_{gen}/S_n
\]
by letting
\begin{align*}
&\eta\Big( (\diag(a_1,-a_1,\dots, a_p,-a_p,a_{p+1},\dots,a_{p+m},\underbrace{0,\dots,0}_r),\\
&\qquad\quad \diag(
\begin{bmatrix} 0& b_1\\ c_1&0 \end{bmatrix},\dots,
\begin{bmatrix} 0& b_p\\ c_p&0 \end{bmatrix},
 \underbrace{0,\dots,0}_m),b_{p+1},\dots,b_{p+r}, \Big)\\
=&\Big((a_1,b_1c_1),(-a_1,b_1c_1),\ldots,(a_p,b_pc_p),(-a_p,b_pc_p),
\\
&  \qquad \qquad (a_{p+1},0),\ldots,(a_{p+m},0),(0,b_{p+1}),\ldots,(0,b_{p+r}) \Big).
\end{align*}
One sees that $\Omega|_{{\rm Im} (\eta) \times {\rm Im} (\eta)}=0$.

\begin{prop}
The morphism $\eta$ is injective (and we identify $\widetilde{\calU}_{p,m,r}$ with the image ${\rm Im} (\eta)$). Then $\widetilde{\calU}_{p,m,r}$ is a lagrangian subvariety of $(\bbC^2)^{n}_{gen}/S_n$.
\end{prop}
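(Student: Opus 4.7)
The plan is twofold: first verify that $\eta$ is injective, and then show $\eta(\widetilde{\calU}_{p,m,r})$ is Lagrangian by combining a dimension count with the vanishing of $\Omega$ on tangent vectors of the image.

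For injectivity, I would identify a point of $\widetilde{\calU}_{p,m,r}$ with the isomorphism class of a semisimple module over $R = \bbC\langle x,y\rangle/(xy+yx)$, as in the proof of Lemma~\ref{lem:U polystable}. From the unordered tuple $\eta(\pi_{p,m,r}(A,B))$ one recovers three types of summands: (i) the pairs $\{(a_i, b_ic_i), (-a_i, b_ic_i)\}$ with $a_i, b_ic_i \in \bbC^*$ correspond to $2$-dimensional indecomposables, (ii) the points $(a_{p+j},0)$ correspond to $1$-dimensional modules with $B=0$, and (iii) the points $(0, b_{p+k})$ correspond to $1$-dimensional modules with $A=0$. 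The openness conditions \eqref{eq1:U}--\eqref{eq4:U} defining $\calU_{p,m,r}$ ensure the nonzero first (resp.\ second) coordinates appearing are pairwise distinct, so the partition of the tuple into the three types and the matching of type (i) into pairs with opposite first coordinate are both uniquely determined. The only subtle point is that a $2$-dimensional summand is determined by the product $b_ic_i$ rather than by $b_i, c_i$ individually; indeed, conjugation by $\mathrm{diag}(1,t)\in GL_2$ sends $(b_i, c_i) \mapsto (tb_i, t^{-1}c_i)$, so closed orbits of such $2$-dimensional pairs are labeled by $b_ic_i \in \bbC^*$. This recovers $(A,B)$ up to $G$-conjugation, hence $\eta$ is injective.

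For the Lagrangian property, Theorem~\ref{thm:pure dimension} gives $\dim \widetilde{\calU}_{p,m,r} = n = \tfrac12 \dim (\bbC^2)^n_{gen}/S_n$, so half-dimensionality is automatic; it only remains to check that $\Omega$ vanishes on tangent vectors of $\text{Im}(\eta)$. Such a tangent vector is parametrized by an infinitesimal variation $(\dot a_1,\ldots,\dot a_{p+m}; \dot u_1,\ldots,\dot u_p; \dot b_{p+1},\ldots,\dot b_{p+r})$ with $u_i := b_ic_i$, producing a tuple whose $i$-th pair is $((\dot a_i, \dot u_i),(-\dot a_i, \dot u_i))$. For two such tangent vectors the contribution to $\Omega$ from the $i$-th pair is $\dot a_i \dot u'_i - \dot a'_i \dot u_i + (-\dot a_i)\dot u'_i - (-\dot a'_i)\dot u_i = 0$, and the contributions from the slots of types (ii)--(iii) vanish trivially since one coordinate in each factor is $0$. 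I expect the injectivity step to be the main hurdle, because it requires both exploiting the openness conditions and the $GL_2$-conjugation identification on the $2$-dimensional summands; the isotropic check itself reduces to a one-line infinitesimal calculation.
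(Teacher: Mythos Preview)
Your proposal is correct and follows essentially the same approach as the paper. For injectivity, the paper also reduces to showing that the $2\times 2$ block $\left[\begin{smallmatrix}0&b\\c&0\end{smallmatrix}\right]$ is determined up to conjugation by the product $bc$, via the explicit conjugation by $\diag(1,c)$; your module-theoretic phrasing and the conjugation by $\diag(1,t)$ amount to the same observation. For the Lagrangian property, the paper simply quotes the observation $\Omega|_{{\rm Im}(\eta)\times{\rm Im}(\eta)}=0$ stated just before the proposition (which, since the parametrization is linear, is equivalent to isotropy of the tangent spaces) together with the dimension count from Theorem~\ref{thm:pure dimension}; your explicit tangent-vector computation is just a spelled-out version of that same verification.
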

\begin{proof}
Let $A=\diag(a,-a)$, $B=\left[\begin{array}{ccccc}
 0 & b  \\
 c & 0
 \end{array}\right]$. Then there exists an inverse matrix $g=\diag(1,c)$ such that $g^{-1}Ag=A$ and $g^{-1}Bg=\left[\begin{array}{ccccc}
 0 & bc  \\
 1 & 0
 \end{array}\right]$ for any $a,b,c\in\bbC^*$.
Therefore $\eta$ is injective. Together with $\Omega|_{{\rm Im} \eta\times {\rm Im} \eta}=0$,
we see that $\eta^*(\Omega)|_{\widetilde{\calU}_{p,m,r}\times \widetilde{\calU}_{p,m,r}}=0$.
Since the dimension of $\widetilde{\calU}_{p,m,r}$ is $n$, $\widetilde{\calU}_{p,m,r}$ is lagrangian.
\end{proof}

\subsection{Commuting vs anti-commuting varieties}
  \label{subsec:C-AC}

Denote by $\scrZ_n^+$ the commuting variety
\[
\scrZ_n^+ =\{ (A, B) \in M_n(\bbC) \times M_n(\bbC) \mid AB=BA \}.
\]
The anti-commuting and commuting varieties can be related directly. We mention two possible such connections.

On one hand, there is a natural morphism $\scrZ_n \rightarrow \scrZ_n^+$, which sends $(A,B) \mapsto (A^2, B)$, or $(A,B) \mapsto (A^2, B^2)$.

On the other hand, the skew-polynomial algebra $\calP_2^-$ is Morita super-equivalent to a polynomial-Clifford algebra $\calP\calC_2$ (cf. \cite{KW09}); that is, there exists a superalgebra isomorphism between the tensor product superalgebra $\calP_2^- \otimes \calC_2$ and $\calP\calC_2$ (Here we recall the Clifford algebra $\calC_2$ is isomorphic to the simple matrix algebra $M_{2\times 2}(\bbC)$). Thus $\scrZ_n$ or the variety of modules of dimension $n$ of the algebra $\calP_2^-$, is isomorphic to  the variety of modules of dimension $2n$ of the algebra $\calP\calC_2$. By restriction the modules of $\calP\calC_2$ to $\calP_2$, we obtain a morphism $\scrZ_n \rightarrow \scrZ_{2n}^+$.

It might be interesting to explore these constructions further.

\section{The semi-nilpotent anti-commuting variety}
  \label{sec:nilpotent}

Recall $G=GL_n$. Introduce the following semi-nilpotent anti-commuting variety, a $G$-subvariety of $\scrZ_n$:
\[
\scrN_n =\{ (A, B) \in M_n(\bbC) \times M_n(\bbC) \mid AB+BA=0, \text{ for } A \text{ nilpotent} \}.
\]
Let $\la =(\la_1, \la_2, \ldots)$  be a partition of $n$. We denote by $\scrN_\la$ the Zariski closure of the union of $G$-orbits of $(J_\la(0),B) \in \scrN_n$.

\begin{lem}\label{lem: irreducible of Nlambda}
For each partition $\la$ of $n$, the variety $\scrN_\la$ is irreducible.
\end{lem}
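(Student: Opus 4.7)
The plan is to realize $\scrN_\la$ as the closure of the image of an irreducible variety under a morphism, and then invoke the standard fact that such a closure is irreducible.

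First I would consider the linear subspace
\[
\scrR_{J_\la(0)} = \{ B \in M_n(\bbC) \mid J_\la(0) B + B J_\la(0) = 0 \},
\]
which was explicitly described in Proposition~\ref{ACmatrix} (with $\sigma = -1$): it consists of block matrices whose blocks are $\sigma$-layered matrices depending linearly on free parameters. In particular $\scrR_{J_\la(0)}$ is a $\bbC$-vector space, hence irreducible. Therefore the product $G \times \scrR_{J_\la(0)}$ is irreducible as well.

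Next I would define the morphism of algebraic varieties
\[
\mu : G \times \scrR_{J_\la(0)} \longrightarrow \scrN_n, \qquad (g, B) \longmapsto \bigl( g J_\la(0) g^{-1},\, g B g^{-1} \bigr).
\]
This is well defined into $\scrN_n$ because $gJ_\la(0)g^{-1}$ is nilpotent and conjugation preserves the anti-commutation relation. By the definition of $\scrN_\la$ as the Zariski closure of the union of $G$-orbits of pairs $(J_\la(0), B)$ with $(J_\la(0), B) \in \scrN_n$, and since the condition $(J_\la(0), B) \in \scrN_n$ is exactly $B \in \scrR_{J_\la(0)}$, we have set-theoretically
\[
\bigcup_{B \in \scrR_{J_\la(0)}} G \cdot (J_\la(0), B) \;=\; \mathrm{Im}(\mu),
\]
whence $\scrN_\la = \overline{\mathrm{Im}(\mu)}$.

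Finally, since $\mu$ is a morphism out of the irreducible variety $G \times \scrR_{J_\la(0)}$, its image $\mathrm{Im}(\mu)$ is irreducible, and the Zariski closure of an irreducible set is irreducible. Therefore $\scrN_\la$ is irreducible. There is no real obstacle in this argument; the only thing to be careful about is making sure that $\scrN_\la$ coincides with $\overline{\mathrm{Im}(\mu)}$ rather than merely containing it, but this is immediate from the definition, since every point in the defining union of orbits lies in $\mathrm{Im}(\mu)$ and vice versa.
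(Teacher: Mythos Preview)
Your proof is correct and is essentially identical to the paper's own argument: the paper defines the same morphism $\phi_\la: GL_n \times \scrR_{J_\la} \to \scrN_n$, $(g,B)\mapsto (gJ_\la g^{-1}, gBg^{-1})$, and concludes irreducibility of $\scrN_\la = \overline{\mathrm{Im}(\phi_\la)}$ from the irreducibility of the source. Your write-up simply fills in a few more details (e.g.\ why $\scrR_{J_\la(0)}$ is a vector space and why the image closure equals $\scrN_\la$), but the approach is the same.
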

\begin{proof}
Define the morphism of varieties $\phi_{\la}: \GL_n\times \scrR_{J_\la} \longrightarrow \scrN_{n}$ to be
$$
\phi_{\la}(g, B):=(gJ_\la g^{-1},gBg^{-1}).
$$
Since $\GL_n\times \scrR_{J_\la}$ is an irreducible affine variety and
$\overline{ {\rm Im}(\phi_{\la})} =\scrN_\la$, $\scrN_\la$ is irreducible.
\end{proof}

\begin{lem}\label{lem: dim of Nlambda}
We have $\dim\scrN_\la=n^2$, for each partition $\la$ of $n$.
\end{lem}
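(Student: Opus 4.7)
The plan is to use the parametrizing morphism $\phi_\la: \GL_n \times \scrR_{J_\la(0)} \to \scrN_n$ from the previous lemma and compute $\dim \scrN_\la$ by the standard fiber-dimension formula, after matching $\dim \scrR_{J_\la(0)}$ with $\dim Z_{\GL_n}(J_\la(0))$.

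First, I would compute the generic fiber of $\phi_\la$ over a point in its image. If $\phi_\la(g, B) = (A_0, B_0)$, then another pair $(g', B')$ lies in the same fiber iff $g'J_\la(0)(g')^{-1} = g J_\la(0) g^{-1}$ and $B' = (g')^{-1} g B g^{-1} g'$. The first condition says $g^{-1}g'$ lies in the centralizer $Z_{\GL_n}(J_\la(0))$, and then $B'$ is determined. Hence the fiber has dimension equal to $\dim Z_{\GL_n}(J_\la(0))$, which is the same as $\dim Z_{\gl_n}(J_\la(0))$ since the former is an open subset of the latter. The standard theorem on fiber dimensions of a dominant morphism to the irreducible target $\scrN_\la = \overline{\mathrm{Im}(\phi_\la)}$ then gives
\[
\dim \scrN_\la = \dim \GL_n + \dim \scrR_{J_\la(0)} - \dim Z_{\gl_n}(J_\la(0)) = n^2 + \bigl(\dim \scrR_{J_\la(0)} - \dim Z_{\gl_n}(J_\la(0))\bigr).
\]

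The heart of the argument is therefore the identity $\dim \scrR_{J_\la(0)} = \dim Z_{\gl_n}(J_\la(0))$. This is where Proposition~\ref{ACmatrix} pays off: with $A = J_\la(0) = \diag(J_{\la_1}(0),\dots,J_{\la_s}(0))$ and all eigenvalues equal to $0$, the condition $\alpha_i = \sigma \alpha_j$ holds for every $i,j$ and both signs $\sigma = \pm 1$. Hence for each choice of $\sigma$ the space of matrices satisfying $AB = \sigma BA$ consists of block matrices whose $(i,j)$-block is an arbitrary $\sigma$-layered matrix ${\rm L}^\sigma(\la_i,\la_j,\vec{v}_{ij})$. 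A $\sigma$-layered $\la_i \times \la_j$ matrix has exactly $\min(\la_i,\la_j)$ free parameters, independent of $\sigma$. Thus
\[
\dim \scrR_{J_\la(0)} \;=\; \sum_{i,j=1}^s \min(\la_i,\la_j) \;=\; \dim Z_{\gl_n}(J_\la(0)),
\]
and consequently $\dim \scrN_\la = n^2$.

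I do not anticipate a serious obstacle here: the only mildly non-trivial point is the identification of the two dimensions via the $\sigma = \pm 1$ symmetry of Proposition~\ref{ACmatrix}, and the fiber-dimension calculation is routine since $\phi_\la$ has a transparent description. If one wishes, the common value $\sum_{i,j}\min(\la_i,\la_j) = \sum_k (\la'_k)^2$ can also be recorded, but it is not needed for the statement.
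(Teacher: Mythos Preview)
Your proposal is correct and essentially the same as the paper's proof: both hinge on the identity $\dim \scrR_{J_\la(0)} = \dim Z_{\gl_n}(J_\la(0))$ obtained from Proposition~\ref{ACmatrix} applied with $\sigma=\pm 1$ at eigenvalue $0$. The only cosmetic difference is that the paper packages the count via the projection $\pi_1:\scrN_\la\to \overline{G\cdot J_\la}$ (so $\dim \scrN_\la=\dim(G\cdot J_\la)+\dim\scrR_{J_\la}=(\dim G-\dim G_{J_\la})+\dim G_{J_\la}=n^2$), whereas you use the parametrizing map $\phi_\la$ and the fiber-dimension formula; these are two sides of the same computation.
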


\begin{proof}
By the definition, there is a projection map
$$\pi_1: \scrN_\la\longrightarrow \ov{G\cdot J_\la},$$
which is a surjective morphism of varieties.
Introduce the following subvariety of $\scrN_\la$:
\[
U_{\la} =\{(A,B)\in \scrN_\la \mid A \text{ is similar to }J_\la\}.
\]
Since $G\cdot J_\la$ is open in $\ov{G\cdot J_\la}$, we see that $U_{\la}=\pi_1^{-1}( G\cdot J_\la)$  is open in $\scrN_{\la}$, and $\ov{U_\la} =\scrN_\la$  by Lemma \ref{lem: irreducible of Nlambda}.

Write $\la =(\la_1, \la_2, \ldots, \la_\ell)$ of length $\ell$. From Proposition~\ref{ACmatrix} with $\sigma=1$, the fibre of $\pi_1$ over any point in $G\cdot J_\la$ is a linear space of dimension $d_\la :=\la_1+3\la_2+\ldots+(2\ell-1)\la_\ell$,
which coincides with the dimension of the stabilizer $G_{J_\la}$ by Proposition~ \ref{ACmatrix} with $\sigma=1$.
Hence,
\[
\dim U_{\la}=\dim (G\cdot J_\la)+d_\la =\dim (G\cdot J_\la)+\dim G_{J_\la} =\dim G =n^2.
\]
The lemma follows now from $\ov{U_\la} =\scrN_\la$.
\end{proof}

\begin{thm}
 \label{thm:N}
The variety $\scrN_n$ has irreducible components $\scrN_\la$ for all partitions of $n$.
Moreover, $\scrN_\la$ for all $\la$ have the same dimension $n^2$.
\end{thm}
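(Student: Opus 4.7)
The dimension assertion is already Lemma~\ref{lem: dim of Nlambda}, so the plan reduces to establishing the description of the irreducible components: namely, that $\scrN_n = \bigcup_\la \scrN_\la$ and that no $\scrN_\la$ is contained in $\scrN_\mu$ for $\la \neq \mu$. Since each $\scrN_\la$ is closed by definition (Zariski closure) and irreducible by Lemma~\ref{lem: irreducible of Nlambda}, these two facts together with the common dimension $n^2$ will identify the $\scrN_\la$ as the full list of irreducible components.

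First, for the covering: given any $(A,B) \in \scrN_n$, the matrix $A$ is nilpotent, hence $G$-conjugate to a unique Jordan form $J_\la(0)$ for some partition $\la$ of $n$. Pick $g \in G$ with $gAg^{-1} = J_\la(0)$. Then $(gAg^{-1}, gBg^{-1}) = (J_\la(0), B')$ with $B' \in \scrR_{J_\la(0)}$ since anti-commutativity is preserved by simultaneous conjugation. Therefore $(A,B)$ lies in the $G$-orbit of $(J_\la(0), B')$, and hence in $\scrN_\la$ by definition. This shows $\scrN_n \subseteq \bigcup_\la \scrN_\la$; the reverse inclusion is immediate.

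Second, for the absence of inclusions: if $\scrN_\la \subseteq \scrN_\mu$, then since both are irreducible of the same dimension $n^2$, we must have $\scrN_\la = \scrN_\mu$. Now the subset $U_\la = \pi_1^{-1}(G \cdot J_\la)$ introduced in the proof of Lemma~\ref{lem: dim of Nlambda} is open and dense in $\scrN_\la$, and similarly $U_\mu$ is open and dense in $\scrN_\mu$. If $\la \neq \mu$, the two Jordan types are distinct, so $U_\la \cap U_\mu = \emptyset$; this contradicts the fact that two nonempty open subsets of an irreducible variety must intersect. Hence $\la = \mu$, and the varieties $\scrN_\la$ are pairwise unrelated by inclusion. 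Combining the three ingredients (covering, irreducibility, no inclusion) yields the theorem. No real obstacle remains once Lemmas~\ref{lem: irreducible of Nlambda} and~\ref{lem: dim of Nlambda} are in hand; the key observation is that the Jordan type of $A$, which is locally constant on the dense open stratum $U_\la$, serves as a discrete invariant distinguishing the components.
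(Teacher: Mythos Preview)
Your proof is correct and follows essentially the same approach as the paper's own proof: cover $\scrN_n$ by the $\scrN_\la$, then use the equal-dimension lemma to force any inclusion to be an equality, and finally use the dense open strata $U_\la$ (distinguished by Jordan type) to derive a contradiction. You spell out the covering step and the disjointness $U_\la \cap U_\mu = \emptyset$ for $\la \neq \mu$ in slightly more detail than the paper does, but the logic is identical.
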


\begin{proof}
It is clear that $\bigcup_{\la} \scrN_\la= \scrN_n.$ Thanks to the irreducibility of $\scrN_\la$ by Lemma \ref{lem: irreducible of Nlambda}, it remains to show that there is no inclusion relation between  $\scrN_\la$ for two different $\la$.
For any two partitions $\la,\mu$ of $n$, if $\scrN_\la\subseteq \scrN_{\mu}$, then $\scrN_\la= \scrN_{\mu}$ by Lemmas~\ref{lem: irreducible of Nlambda} and \ref{lem: dim of Nlambda}. Denote by $\pi_1: \scrN_\la \rightarrow \ov{G\cdot J_\la}$ and $\pi_1': \scrN_{\mu} \rightarrow \ov{G\cdot J_{\mu}}$ the natural projection. Let $U_{\la}=\pi_1^{-1}(G\cdot J_\la)$ and $U_{\mu}=\pi_1'^{-1}(G\cdot J_{\mu})$. Then $U_{\la}$ and $U_{\mu}$ are open in $\scrN_\la$ and $\scrN_{\mu}$ respectively. Since $\scrN_\la= \scrN_{\mu}$, we have $U_{\la}\cap U_{\mu}\neq \emptyset$, and then $\la=\mu$. The theorem is proved.
\end{proof}


\end{document}